\newcommand\Lam{\Lambda}
\newcommand\ze{\zeta}
\DeclareMathOperator{\Fix}{Fix}
\DeclareMathOperator{\Hom}{Hom}
\DeclareMathOperator{\Sym}{Sym}
\DeclareMathOperator{\Def}{Def}
\DeclareMathOperator{\Stab}{Stab}
\DeclareMathOperator{\PGL}{PGL}
\DeclareMathOperator{\AGL}{AGL}
\newcommand{\CC}{\ensuremath{\mathbb{C}}}
\newcommand{\RR}{\ensuremath{\mathbb{R}}}
\newcommand{\ZZ}{\ensuremath{\mathbb{Z}}}
\renewcommand{\Phi}{\phi}
\def\eea{\end{eqnarray*}}
\def\bea{\begin{eqnarray*}}
\DeclareMathOperator{\Aut}{Aut}
\DeclareMathOperator{\End}{End}
\DeclareMathOperator{\diag}{diag}
\DeclareMathOperator{\ord}{ord}
\DeclareMathOperator{\triv}{triv}
\newcommand\dual{\mathrel{\raise3pt\hbox{$\underline{\mathrm{\thinspace d
\thinspace}}$}}}
\newcommand\qe{\ifhmode\unskip\nobreak\fi\quad $\Box$}       
\def\BOX{\hfill\lower.5\baselineskip\hbox{$\Box$}}
\newtheorem{theorem}{Theorem}
\newtheorem{remark}[theorem]{Remark}
\newenvironment{rem}{\begin{remark}\rm}{\end{remark}}
\newtheorem{prop}[theorem]{Proposition}
\newtheorem{cor}[theorem]{Corollary}
\newtheorem{lemma}[theorem]{Lemma}
\newtheorem{example}[theorem]{Example}
\newtheorem{assump}[theorem]{Assumption}
\newtheorem{notation}[theorem]{Notation}
\numberwithin{theorem}{section}
\numberwithin{equation}{section}
\theoremstyle{definition}
\newtheorem{defin}[theorem]{Definition}
\newenvironment{definition}{\begin{defin}\rm}{\end{defin}}
\DeclareMathOperator{\Aff}{Aff}
\DeclareMathOperator{\im}{im}
\DeclareMathOperator{\GL}{GL}
\DeclareMathOperator{\id}{id}
\DeclareMathOperator{\Heis}{He}
\DeclareMathOperator{\diff}{diff}
\DeclareMathOperator{\exc}{exc.}
\def\tagform@#1{\maketag@@@{\ignorespaces#1\unskip\@@italiccorr}}
\newcolumntype{H}{@{}>{\lrbox0}l<{\endlrbox}} 
\newcommand{\mylabel}[2]{#2\def\@currentlabel{#2}\label{#1}}
\begin{document}

\title[Rigid Hyperelliptic Fourfolds]{The Classification of Rigid Hyperelliptic Fourfolds}
\author{Andreas Demleitner and Christian Glei\ss ner}
\address{Andreas Demleitner, University of Freiburg, Ernst-Zermelo-Str. 1,
D-79104 Freiburg, Germany \newline
Christian Gleissner, University of Bayreuth, Universit\"atsstr. 30, D-95447 Bayreuth, Germany}
\email{andreas.demleitner@math.uni-freiburg.de, \quad christian.gleissner@uni-bayreuth.de}

\thanks{
\textit{2010 Mathematics Subject Classification.} Primary: 14J10, 32G05, Secondary:  14L30, 20H15, 20C15, 32Q15.  \\
\textit{Keywords}: Rigid complex manifold, deformation theory, flat K\"ahler manifold,  hyperelliptic variety, Bieberbach group. \\
\textit{Acknowledgements:} The authors would like to thank Ingrid Bauer, Fabrizio Catanese, Julia Kotonski, Rafa\l~ Lutowski and Andrzej Szczepa\'nski
 for useful comments and discussions. Moreover, they thank an anonymous referee for a plethora of helpful suggestions to improve the manuscript, especially for significantly shortening the proof of Proposition \ref{propExcludeK}.
}

\begin{abstract}
We provide  a fine classification of rigid hyperelliptic manifolds in dimension four up to biholomorphism and diffeomorphism.   
These manifolds are explicitly described as finite \'etale quotients of a product of four Fermat elliptic curves. 
\end{abstract}

\maketitle

\tableofcontents

\section{Introduction}

It is classically known that any compact  flat Riemannian manifold $X$
is a  quotient of  the affine space $\mathbb R^n$ by  a torsion-free, discrete and cocompact group $\Gamma \leq \mathbb E(n)$ of Euclidean motions. 
Such groups $\Gamma$ are called Bieberbach groups. By Bieberbach's first structure theorem, there is a short exact sequence 
\[
0 \to \Lambda \to \Gamma \to G \to 1, 
\]
where the subgroup $\Lambda$ of translations is a lattice of full rank  
 and $G$ is a finite group isomorphic to  the holonomy  group of $X$. 
 In particular, $X$ is the quotient of the torus $T=\mathbb R^n/\Lambda$ by  the induced  action of  $G$. 
Obviously, not every flat manifold  $X=\mathbb R^{2n}/\Gamma$ has a complex  K\"ahler structure, which means that the holonomy representation is unitary. 
In case of existence, the manifold $X$ is called a {\em flat K\"ahler manifold} or a {\em (generalized) hyperelliptic manifold}. They have been classified in the surface case, i.e., in complex dimension $2$
by Bagnera-de Franchis \cite{bdf} as well as Enriques-Severi \cite{Enr-Sev}. In complex dimension $3$, these manifolds have been investigated in works of Uchida-Yoshihara \cite{Uchida-Yoshihara}, Lange \cite{Lange} and Catanese-Demleitner \cite{CD-2}. 
In \cite{Demleitner-thesis}, Demleitner derived a complete list of holonomy groups of generalized hyperelliptic fourfolds. At the moment, a fine classification up to biholomorphism  is not yet established. As a first step towards such a classification, we  restrict our attention to those  examples having  a rigid complex structure. \\
From the $\mathcal C^{\infty}$-point of view, compact flat Riemannian manifolds  behave very nicely. Indeed, 
as a consequence of  Bieberbach's second theorem, the diffeomorphism type of a compact flat Riemannian manifold is uniquely determined by the  fundamental group of the underlying topological space.
The example of complex tori shows that we cannot expect the analogous property in the holomorphic category without any additional assumptions. In fact,  in this paper we present  diffeomorphic but non-biholomorphic rigid hyperelliptic fourfolds, hence, this property fails even under the strong assumption of rigidity. 
We want to point out that Bieberbach's third theorem shows that in any dimension, there are only finitely many compact flat Riemannian manifolds  up to diffeomorphism. 
Our first main result is: 

\begin{theorem} \label{theorem-rigid-hyperelliptic}
	Let $X = T/G$ be a rigid hyperelliptic manifold with holonomy $G$, then:
	\begin{itemize}
		\item[(a)] $\dim(X) \geq 4$.
		\item[(b)] If $\dim(X) = 4$, then $G\cong \ZZ_3^2$ or $G \cong \Heis(3)$, the Heisenberg group of order $27$: 
\begin{align} \label{present-he3}
\Heis(3) := \langle g,h,k \ | \ g^3 = h^3 = k^3 = [g,k] = [h,k] = 1, ~ [g,h] = k\rangle.
\end{align}
	\end{itemize}
\end{theorem}

Our second main result is the full classification of rigid hyperelliptic fourfolds, up to biholomorphism and diffeomorphism: 

\begin{theorem}\label{Mani}
Let $E := \mathbb C/\mathbb Z[\zeta_3]$ and $t := (1+2\zeta_3)/3 \in E[3]$. Then:
\begin{enumerate}[ref=(\theenumi)]
	\item There are exactly twelve biholomorphism classes $X_i$ of rigid hyperelliptic fourfolds with holonomy $\mathbb Z_3^2$. They are realized as quotients of $E^4/K_i$ by the actions
	\begin{align*}
		\phi_i(a,b)(z) := \diag(\zeta_3^a, \ \zeta_3^b, \ \zeta_3^{2a+b}, \ \zeta_3^{a+b})\cdot z + \tau_i(a,b),
	\end{align*}
	where $K_i$ and $\tau_i$ are according to the table below:
	\begin{center}
		\bgroup\def\arraystretch{1.3}\begin{tabular}{|c|l|l|l|} \hline 
		$i$ & $K_i$ & $\tau_i(1,0)$ & $\tau_i(0,1)$ \\ \hline \hline 
		$1$ & $\{0\}$ & $(0,t,t,t)$ &  $(t,0,0,0)$ \\ \hline
		$2$ & $\langle (0,0,t,t)\rangle$ & $(0,t,t,t)$ &  $(t,0,0,0)$ \\  
		$3$ & $\langle (0,t,t,t)\rangle$ & $(0,1/3,1/3,1/3)$ &  $(t,0,0,0)$ \\  
		$4$ & $\langle (t,0,0,t) \rangle$ & $(0,t,t,1/3)$ &  $(2/3,0,0,0)$ \\  
		$5$ & $\langle (t,0,t,t)\rangle$ & $(0,t,1/3,1/3)$ &  $(2/3,0,0,0)$ \\  
		$6$ & $\langle (t,t,t,t) \rangle$ & $(0,1/3,1/3,1/3)$ &  $(2/3,0,0,0)$ \\ \hline
		$7$ & $\langle (0,t,t,t), \ (0,t,-t,0)\rangle$ & $(0,1/3,1/3,1/3)$ &  $(t,0,0,0)$ \\  
		$8$ & $\langle(0,0,t,t),\ (t,0,-t,0)\rangle$ & $(0,t,1/3,2/3)$ &  $(2/3,0,0,0)$ \\  
		$9$ & $\langle (t,t,0,0),\ (0,0,t,t)\rangle$ & $(0,1/3,1/3,1/3)$ &  $(2/3,0,0,0)$  \\
		$10$ & $\langle (0,0,t,t),\ (t,t,0,t) \rangle$ & $(0,1/3,1/3,2/3)$ &  $(2/3,0,0,0)$  \\
		$11$ & $\langle (t,0,0,t),\ (t,t,t,-t) \rangle$ & $(0,1/3,1/3,2/3)$ &  $(2/3,0,0,0)$  \\ \hline 
		$12$ & $\langle (-t,t,0,0),\ (t,0,t,t),\ (t,t,t,0) \rangle$ & $(0,1/3,1/3,2/3)$ &  $(1/3,0,0,0)$ \\ \hline 
		\end{tabular}
		\egroup
	\end{center}

	\medskip
	\noindent These twelve complex manifolds form eight diffeomorphism classes:
	\begin{align*}
		X_1, ~ \ X_2 \simeq_{\diff} X_4, ~\ X_3 \simeq_{\diff} X_5, ~\ X_6, ~\ X_7 \simeq_{\diff} X_8, ~\ X_9, ~\ X_{10} \simeq_{\diff} X_{11}, ~\ X_{12}.
	\end{align*}
Manifolds belonging to different $\mathcal C^\infty$-classes have non-isomorphic fundamental groups. 
A hyperelliptic fourfold  $X$  whose fundamental group is isomorphic to the fundamental group of a rigid hyperelliptic fourfold  with holonomy $\mathbb Z_3^2$ is rigid  and therefore biholomorphic to some $X_j$.

\bigskip
	\item \label{Mani2} There are exactly four biholomorphism classes $X_{i,j}$ of hyperelliptic fourfolds with holonomy $\Heis(3)$. They are realized as $E^4/K_i$ by the actions 
	\begin{align*}
	&\phi_{i,j}(g)(z) := \begin{pmatrix}
	1 & 0 & 0 & 0 \\
	0 & 0 & 0 & 1 \\
	0 & 1 & 0 & 0 \\
	0 & 0 & 1 & 0 
	\end{pmatrix}\cdot z + \tau_j(g) \quad \makebox{and} \quad \phi_{i,j}(h)(z) := \begin{pmatrix}
	\zeta_3 &&& \\ & 1 && \\ && \zeta_3^2 & \\ &&& \zeta_3
	\end{pmatrix} \cdot z + \tau_j(h),
	\end{align*}
	
	\bigskip
	\noindent 
	where $K_1 :=  \langle (0,t,t,t)\rangle$, $K_2 := \langle (0,t,t,t), (0,t,-t,0)\rangle$	and 
	\begin{center}
		\begin{tabular}{ll}
		$\tau_1(g) := (1/3,0,0,0),$ &   $\tau_1(h) := (0,1/3,1/3,1/3)$, \\
		$\tau_2(g) := (1/3,0,0,-t),$ &  $\tau_2(h) := (0,1/3,1/3,1/3)$.
	\end{tabular}
	\end{center}	
	The manifolds $X_{i,j}$ have pairwise distinct fundamental groups. Each hyperelliptic manifold with holonomy $\Heis(3)$ is rigid and therefore biholomorphic to 
	one of the  $X_{i,j}$.
	\end{enumerate} 
\end{theorem}

Observe that all manifolds in the theorem are finite quotients of a product of elliptic curves and therefore 
projective. The projectivity is not a coincidence, in fact, rigid hyperelliptic fourfolds cannot have global non-zero holomorphic 2-forms. Thus, 
$H^2(X,\mathbb Z)\otimes_{\mathbb Z} \mathbb C \simeq H^{1,1}(X)$ and consequently, there exist K\"ahler classes represented by  positive line bundles. 
More generally,  in \cite{CD}, it is shown that a hyperelliptic manifold has arbitrary small algebraic approximations. 

Some of the manifolds of Theorem \ref{Mani} were already discussed in the literature. In  \cite[Section 10.1.4]{Demleitner-thesis}, the author already presented the rigid hyperelliptic fourfold $X_{1,2}$ with holonomy $\Heis(3)$.
 In  \cite[Theorem 3.4]{Bauer-Catanese-rigid}, the reader can find for each $n\geq 4$  an 
example of a rigid hyperelliptic manifold with holonomy $\mathbb Z_3^2$. 
For $n=4$, their example is  $X_1$ from our Theorem \ref{Mani}. The same fourfold was found and discussed in 
 \cite[Section 5.1]{HalendaLutowski}. In  \cite[Theorem 5.4]{Bauer-Gleissner-2}, two rigid hyperelliptic 
 fourfolds with holonomy $\mathbb Z_3^2$ and different fundamental groups are  constructed.   In our classification, these examples are 
 $X_1$ and $X_3$. We point out that our manifolds have $b_1 = 0$: such manifolds are of independent interest and were studied in the papers \cite{HillerSah1} and \cite{HillerSah2}, where the authors give $X_1$ as an example. 

%
%
%
%
%
%
%
%
%
%
We will now sketch the outline of the paper. 
In Section \ref{section-Prelim}, we collect some  preliminaries concerning hyperelliptic manifolds and   explain the necessary tools  from deformation theory. 
 In particular, we show that the rigidity of the hyperelliptic manifold is encoded in the complex holonomy representation. 
 Section \ref{section-1stMain} is devoted to prove the  first main result of our paper, Theorem \ref{theorem-rigid-hyperelliptic}. 
 Going  through 
 the list of complex holonomy groups \cite{Uchida-Yoshihara}  
  in dimension $3$  and analyzing  their representation theory, we prove that none of these groups allow a rigid and free action. In dimension four, we use Demleitner's list of $79$ complex holonomy groups to  show that only the two groups $\mathbb Z_3^2$ and  $\Heis(3)$ allow a rigid action. 
  Moreover,  for both of these groups, there is, up to equivalence and automorphism,  a unique candidate for the complex holonomy representation $\rho$. 
In the third section, we recall Bieberbach's structure theorems of crystallographic and Bieberbach groups and 
 explain their geometric consequences in our setting. 
In the fourth section, we determine all lattices $\Lambda$ which have a  $\Heis(3)$ or $\mathbb Z_3^2$  module structure via the holonomy representation 
 $\rho$. Moreover, we determine all free and rigid actions of our holonomy groups on the tori $\mathbb C^4/\Lambda$. The linear parts of these actions are 
 given by $\rho$, while the translation parts are so called \emph{special}  cohomology classes in the group cohomology 
 $H^1(G,\mathbb C^4/\Lambda)$. 
In Section \ref{section-Bihol}, we decide, using the developed theory about Bieberbach groups, which fourfolds found in the previous chapter are biholomorphic or diffeomorphic, respectively. This amounts to determine the orbits of the action of a certain group on the special cohomology classes of  $H^1(G,\mathbb C^4/\Lambda)$.  To list the actions and determine these orbits, we use the computer algebra system MAGMA \cite{MAGMA}, which allows for an efficient
computation. The interested reader can find our code on the website:
\begin{center}
\url{http://www.staff.uni-bayreuth.de/~bt300503/publi.html}.
\end{center}

Finally, in the last section, we summarize the proof of Theorem \ref{Mani}.

\bigskip
\bigskip

\textbf{Notation.}  We use the standard notation from complex geometry and representation theory of finite groups. 
The group of affine linear transformations of $\mathbb K^n$ is denoted by $\AGL(n,\mathbb K)$. We write $\Aut(T)$ for the group of biholomorphic automorphisms of a complex torus $T$, whereas $\Aut_0(T)$ is the subgroup of group automorphisms. Similarly,  $\Aff(T)$ is the group of affine  diffeomorphisms of $T$ and $\Aff_0(T)$ the subgroup of diffeomorphisms fixing the origin.

\section{Basic Definitions and  Preliminaries} \label{section-Prelim}

In this section, we collect preliminaries concerning rigidity and hyperelliptic manifolds that we will use in our paper. \\
Let $T = V/\Lam$ be a complex torus. Since holomorphic maps between complex tori are affine, an automorphism $g \in \Aut(T)$ can be decomposed into its \textit{linear part} $\rho(g)$ and its \textit{translation part} $\tau(g)$, i.e., $g(z) = \rho(g)z + \tau(g)$. A subgroup $G \leq \Aut(T)$ therefore defines two representations
\begin{align*}
\rho \colon G \to \GL(V) \qquad \text{ and } \qquad \rho_{\Lambda} \colon G \to \GL(\Lambda),
\end{align*}
which both map an element of $G$ to its linear part, viewed as automorphisms of $V$ and $\Lambda$, respectively. These representations are called the 
\textit{analytic} or \textit{complex holonomy} and the \textit{integral holonomy} representation.  The relation between the two representations comes from the Hodge decomposition $\Lambda \otimes_\ZZ \CC = V \oplus \overline{V}$, which implies that $\rho_{\Lambda} \otimes \CC$ is equivalent to $\rho \oplus \overline{\rho}$. In particular, the characteristic polynomial of $\rho(g) \oplus  \overline{\rho(g)}$ has integral coefficients for all $g \in G$.

\begin{rem} \label{no-translations}
	Let $G$ be a finite group of automorphisms of the complex torus $T$. Denote by $H$ the normal subgroup of $G$ consisting of the translations. Then, $T/H$ is again a complex torus, and the canonical map $(T/H)/(G/H) \to T/G$ is biholomorphic. For this reason, it suffices to study actions of finite groups on complex tori which do not contain any translations.
\end{rem}

\begin{defin}
	A \textit{hyperelliptic manifold} is a quotient $X = T/G$ of a complex torus $T$ by a finite, non-trivial group $G \leq \Aut(T)$ which acts freely on $T$ and does not contain any translations. The group $G$ is called the \emph{holonomy group} of $X$.
\end{defin}

\begin{rem}\label{basicAnalytic}
	Let $X = T/G$ be a hyperelliptic manifold with holonomy $G$. 
	\begin{enumerate}
		\item The associated analytic representation $\rho \colon G \to \GL(V)$ is faithful because  $G$ does not contain translations.   
		\item Since $g \in G \setminus \{\id_T\}$ acts freely on $T = V/\Lam$, the lift of the fixed point equation
		\begin{align*}
		(\rho(g) - \id_V)z = \lambda - \tau(g)
		\end{align*}
		has no solution in $z \in V$ and $\lambda \in \Lambda$. In particular, $1$ is an eigenvalue of $\rho(g)$.
	\end{enumerate}
	
\end{rem}

There are several notions of rigidity, see \cite[Definition 2.1]{Bauer-Catanese-rigid}. We recall only the parts which are relevant to us.  

\begin{defin} \label{rigidity}
Let $X$ be a compact complex manifold.
\begin{enumerate}
	\item A \textit{deformation} of $X$ consists of the following data:
	\begin{itemize}
		\item a flat and proper holomorphic map  $\pi \colon \mathfrak X \to B$ of connected complex spaces,
		\item a point $0 \in B$,
		\item an isomorphism $\pi^{-1}(\{0\}) \simeq X$. 
	\end{itemize}
	\item We call $X$ \textit{(locally) rigid} if for every deformation  $\pi \colon \mathfrak X \to B$ of $X$, there is an open neighborhood $U \subset B$ of $0$ such that $\pi^{-1}(U) \simeq X \times U$ and $\pi|_{\pi^{-1}(U)} \colon X \times U \to U$ is the projection onto the second factor.
	\item We call $X$ \textit{infinitesimally rigid} if $H^1(X,\Theta_X) = 0$, where $\Theta_X$ 
	is the holomorphic tangent bundle of $X$.
\end{enumerate}
\end{defin}

Kodaira-Spencer-Kuranishi theory shows that an infinitesimally rigid complex manifold is also rigid (see \cite{Catanese-Guide} for an account on deformation theory). 
It was a question of Kodaira and Morrow \cite[Problem on p. 45]{KM} whether the converse implication holds.  In general, this is false. 
The first counterexamples were given by 
Bauer and Pignatelli \cite{Bauer-Pignatelli}, see also the paper of B\"ohning,  Bothmer and Pignatelli \cite{BBP}.
However, for hyperelliptic manifolds, the two notions coincide: 

\begin{prop}\label{rigidinf}
A hyperelliptic manifold $X=T/G$ is rigid if and only if it is infinitesimally rigid. 
\end{prop}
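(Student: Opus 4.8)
The plan is to show directly that for a hyperelliptic manifold $X = T/G$, the infinitesimal deformations $H^1(X,\Theta_X)$ are computed by $G$-invariant sections of the constant sheaf, and that these invariants coincide with the obstruction to rigidity in the Kodaira-Spencer-Kuranishi sense, so that the "only if" direction can also be extracted. One implication is free: infinitesimal rigidity implies rigidity by Kodaira-Spencer-Kuranishi theory (cited just above as \cite{Catanese-Guide}), so the content is the converse — that if $H^1(X,\Theta_X)\neq 0$, then $X$ genuinely deforms, i.e.\ it is not rigid.

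First I would identify $H^1(X,\Theta_X)$ with an invariant subspace. Let $\pi\colon T\to X$ be the quotient map, $T = V/\Lambda$. Since $\Theta_T \cong \Oh_T\otimes_\CC V$ is trivial, one has $H^q(T,\Theta_T)\cong V\otimes_\CC H^q(T,\Oh_T)\cong V\otimes_\CC \overline{V}^{\wedge q}$ (using $H^q(T,\Oh_T)\cong \Lambda^q \overline V^\vee$ or the analogous Dolbeault description). Because $G$ acts freely, $\pi$ is étale and $\Theta_X$ pulls back to $\Theta_T$; hence $H^q(X,\Theta_X) = H^q(T,\Theta_T)^G$, the $G$-invariants for the natural $G$-action which on $V$ is the analytic holonomy $\rho$ and on $H^q(T,\Oh_T)$ is induced by $\rho_\Lambda\otimes\CC = \rho\oplus\overline\rho$. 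In particular $H^1(X,\Theta_X)\cong \big(V\otimes_\CC \overline V^\vee\big)^G \cong \Hom_G(V,V) = \End_G(V)$ up to the correct identification — more precisely it is the invariant part of $V\otimes H^1(T,\Oh_T)$, which as a $G$-representation sits inside $\rho\otimes\overline\rho$, and this is exactly the data that governs whether the complex structure on $T$ can be varied $G$-equivariantly.

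Next, for the converse direction, I would argue that every class in $H^1(X,\Theta_X)$ is \emph{unobstructed} and is realized by an honest deformation. The key point is that the Kuranishi family of $T$ is smooth: $H^2(T,\Theta_T) = V\otimes \Lambda^2\overline V^\vee$ need not vanish, but the cup-product/Lie-bracket obstruction map vanishes on $T$ because $\Theta_T$ is a trivial bundle and the Kodaira-Spencer classes are constant (with values in $V\otimes\overline V^\vee$), so their Schouten bracket vanishes identically — the Kuranishi space of a complex torus is smooth of dimension $\dim H^1(T,\Theta_T)=n^2$, namely the space of complex structures near the given one. The finite group $G$ acts on this Kuranishi family, and by averaging / slicing one obtains that the Kuranishi space of $X$ is the fixed locus, which is a linear subspace isomorphic to $H^1(X,\Theta_X)$ and on which the family is (universal and) non-trivial as soon as the space is positive-dimensional. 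Concretely: a nonzero invariant class deforms $V$ inside $\Lambda\otimes_\ZZ\CC$ in a way compatible with $\rho$, producing a nontrivial holomorphic family $T_s/G$ of hyperelliptic manifolds over a disk, which by Remark~\ref{no-translations} and freeness stays hyperelliptic; this family is non-isotrivial precisely because the period point moves, so $X$ is not rigid.

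The main obstacle I expect is the bookkeeping around the $G$-equivariant Kuranishi theory — justifying cleanly that taking $G$-invariants commutes with forming the Kuranishi family (so that the base of the Kuranishi family of $X$ is literally $H^1(X,\Theta_X)$ with the family being the quotient of the restricted family over $T$), and then checking that a nonzero tangent vector really yields a non-biholomorphic nearby fibre rather than an isotrivial family. For the latter one uses that nearby tori $T_s$ have different periods (the Kuranishi map for tori is an open embedding into the local period domain), and a biholomorphism $T_s/G\to T/G$ would lift — after passing to the finite cover, using that both are $K(\pi,1)$'s with the same fundamental group $\Gamma$ — to an affine isomorphism $T_s\to T$ conjugating the $G$-actions, contradicting the change of period unless $s=0$ up to the action of $\Aut_0(T)^G\ltimes(\text{translations})$, which is accounted for in the dimension count. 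Everything else — the identification $H^1(X,\Theta_X)=H^1(T,\Theta_T)^G$, the triviality of $\Theta_T$, the smoothness of the torus Kuranishi space — is standard and can be cited.
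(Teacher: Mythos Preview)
Your proposal is correct and follows essentially the same route as the paper: identify $H^1(X,\Theta_X)$ with $H^1(T,\Theta_T)^G$ via the \'etale cover, use smoothness of the Kuranishi space of the torus (so that the $G$-fixed locus is exactly the invariant subspace), and conclude that a nonzero invariant class yields an honest nontrivial deformation. The paper compresses the middle step --- that rigidity of $X$ is equivalent to $\Def(T)^G$ being a point --- by citing \cite{CD}, whereas you spell out the $G$-equivariant Kuranishi argument and the non-isotriviality check directly; this is the only real difference, and your extra discussion of why nearby fibres are genuinely non-biholomorphic (via periods and lifting) is a reasonable substitute for that citation.
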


\begin{proof}
According to  \cite[Section 5 and Proposition 3]{CD}, $X$ is rigid if and only if the pair $(T,G)$ is rigid, i.e., 
$\Def(T)^G:=\Def(T) \cap H^1(T,\Theta_{T})^G$ is a  point. 
 Since $\Def(T)$ is smooth of dimension $h^1(T,\Theta_T)$ (cf. \cite[p. 230 ff.]{Kodaira}), the set 
$\Def(T)^G$ consists of a single point if and only if $H^1(T,\Theta_{T})^G=0$. This precisely means that  $H^1(X,\Theta_{X})\simeq 
H^1(T,\Theta_{T})^G=0$ because the quotient map $\pi \colon T \to T/G$ is unramified. 
\end{proof}

The rigidity of a hyperelliptic manifold $X=T/G$ depends only on the associated analytic representation: 
\begin{cor}\label{ConjRig}
A hyperelliptic manifold $X=T/G$ is rigid if and only if the analytic representation $\rho\colon G \to \GL(V)$ and its conjugate 
$\overline{\rho}$ have no common irreducible subrepresentation. 
\end{cor}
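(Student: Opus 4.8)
The plan is to deduce this from Proposition \ref{rigidinf}, which reduces rigidity of $X = T/G$ to the vanishing $H^1(T, \Theta_T)^G = 0$, and then to compute this invariant space explicitly in terms of the analytic representation $\rho$. First I would recall the standard description of the tangent cohomology of a complex torus $T = V/\Lambda$: since $\Theta_T$ is the trivial bundle $T \times V$, we have $H^1(T, \Theta_T) \cong V \otimes_\CC H^1(T, \Oh_T) \cong V \otimes_\CC \overline{V}^\vee$, where I am using that $H^1(T,\Oh_T) \cong \overline{V}^\vee$ (the antiholomorphic cotangent space at the origin), via the Dolbeault/Hodge identification $H^1(T,\Oh_T) \cong H^{0,1}(T)$. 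The action of $G$ on this space is the natural one induced by $\rho$ on the $V$-factor and by $\overline{\rho}$ (hence $\overline{\rho}^\vee$ on the dual) on the $H^1(T,\Oh_T)$-factor. Thus as a $G$-representation,
\[
H^1(T, \Theta_T) \;\cong\; V \otimes_\CC \overline{V}^\vee \;\cong\; \Hom_\CC(\overline{V}, V),
\]
where $G$ acts by $g \cdot \psi = \rho(g) \circ \psi \circ \overline{\rho}(g)^{-1}$.

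Next I would identify the invariants: $\big(\Hom_\CC(\overline{V}, V)\big)^G = \Hom_G(\overline{V}, V) = \Hom_G(\overline{\rho}, \rho)$, the space of $G$-equivariant linear maps from the conjugate representation to the analytic representation. By Schur's lemma (applied to the semisimple category of finite-dimensional complex representations of the finite group $G$), this $\Hom$-space is zero if and only if $\rho$ and $\overline{\rho}$ share no common irreducible constituent. Combining with Proposition \ref{rigidinf}, $X$ is rigid $\iff H^1(T,\Theta_T)^G = 0 \iff \Hom_G(\overline{\rho}, \rho) = 0 \iff \rho$ and $\overline{\rho}$ have no common irreducible subrepresentation, which is exactly the claim.

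The only genuinely delicate point is getting the $G$-module structure on $H^1(T,\Theta_T)$ right — in particular making sure the antiholomorphic factor contributes $\overline{\rho}$ (equivalently its dual $\overline{\rho}^\vee$, which has the same irreducible constituents as $\overline{\rho}$) and not $\rho$ itself; this is where the Hodge decomposition $\Lambda \otimes_\ZZ \CC = V \oplus \overline{V}$ recalled in the preliminaries, together with the compatibility of the $G$-action with the cup product pairing $H^1(T,\Oh_T) \times H^0(T,\Omega^1_T) \to \CC$, pins everything down. I expect no further obstacles: once the $G$-equivariant identification $H^1(T,\Theta_T) \cong \Hom_G(\overline\rho, \rho)$ is in place, the statement is immediate from Schur's lemma and Proposition \ref{rigidinf}. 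One should also note that ``no common irreducible subrepresentation'' and ``no common irreducible constituent'' agree here because all representations in sight are completely reducible.
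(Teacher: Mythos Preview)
Your argument is correct and essentially identical to the paper's: both use the Dolbeault identification $H^1(T,\Theta_T)\simeq \overline{V}^\vee\otimes V\simeq \Hom_\CC(\overline{V},V)$ as $G$-modules, take invariants to get $\Hom_G(\overline{\rho},\rho)$, and conclude by Schur's lemma together with Proposition~\ref{rigidinf}. One small slip to fix: your parenthetical claim that $\overline{\rho}^\vee$ ``has the same irreducible constituents as $\overline{\rho}$'' is false in general (for finite $G$ one has $\overline{\rho}^\vee\cong\rho$, not $\overline{\rho}$), and in your final summary $H^1(T,\Theta_T)\cong \Hom_G(\overline{\rho},\rho)$ should read $H^1(T,\Theta_T)^G\cong \Hom_G(\overline{\rho},\rho)$; neither affects the actual argument.
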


\begin{proof}
Using Dolbeault's interpretation of cohomology, we write 
\[
H^1(T,\Theta_T) \simeq  H^{0,1}(\Theta_T) = \langle d\overline{z}_1,\ldots, d\overline{z}_n\rangle \otimes 
\bigg\langle \frac{\partial}{\partial z_1}, \ldots,  \frac{\partial}{\partial z_n} \bigg\rangle. 
\]
By definition, the $G$-action on $V=\big\langle \frac{\partial}{\partial z_1}, \ldots,  \frac{\partial}{\partial z_n} \big\rangle $ is the analytic representation,  while the action on $\langle d\overline{z}_1,\ldots, d\overline{z}_n\rangle $ is the dual of the complex conjugate of the analytic representation.   This gives  an isomorphism of representations 
$H^1(T,\Theta_T) \simeq \overline{V}^{\vee} \otimes V \simeq  \Hom(\overline{V},V)$ and the claim follows from Schur's lemma and Proposition \ref{rigidinf}. 
\end{proof}

\begin{rem}\
\begin{enumerate}
\item
The corollary tells us in particular that  the analytic representation of a rigid hyperelliptic manifold contains no self-conjugate irreducible representations, i.e., no representations of \emph{real or  quaternionic type} \cite[p. 108]{Serre}.  
\item
A rigid hyperelliptic manifold $X$ has no non-zero global holomorphic 2-forms because 
\[
0=\langle \overline{\chi_{\rho}}, \chi_{\rho} \rangle = \langle \overline{\chi_{\rho}}^2,\chi_{\triv} \rangle =  \langle \wedge^2 (\overline{\chi_{\rho}}),\chi_{\triv} \rangle + \langle \Sym^2 (\overline{\chi_{\rho}}),\chi_{\triv} \rangle
\] 
implies $h^0(X,\Omega_{X}^2)=0$.
\end{enumerate}
\end{rem}

\section{Proof of the First Main Theorem} \label{section-1stMain}

In order to prove Theorem \ref{theorem-rigid-hyperelliptic},  we use the classification of hyperelliptic groups according to Bagnera-de Franchis
\cite{bdf} in dimension 2, Uchida-Yoshihara \cite{Uchida-Yoshihara} in dimension $3$ (later completed by Catanese and Demleitner  \cite{CD-2}) and Demleitner  \cite{Demleitner-thesis} in dimension $4$. It would  be interesting to find a proof which does not depend on these  classification results. 

\begin{proof}[Proof of Theorem \ref{theorem-rigid-hyperelliptic} (a)]
Hyperelliptic surfaces are never rigid because their 
holonomy groups are cyclic: Remark \ref{basicAnalytic} (2) then implies that 
the associated analytic representation contains the trivial representation.  
To prove that there are no rigid  hyperelliptic threefolds, we use the list of groups of  Uchida-Yoshihara \cite{Uchida-Yoshihara}: the group is either the dihedral group $\mathcal D_4$ of order $8$ or an Abelian group of the form $\ZZ_{d_1} \times \ZZ_{d_2}$, where  $d_1|d_2$.  For the latter, we can restrict to the ones  with $d_1 > 1$, i.e., the  non-cyclic Abelian groups.  Since the representation theories of $\mathcal D_4$ and $\ZZ_2^2$ are real, no hyperelliptic manifold with holonomy group $\mathcal D_4$ or $\ZZ_2^2$ is rigid.  For $(d_1,d_2) \neq (2,2)$, $d_1 \neq 1$, the assertion follows from \cite[Lemma 6.5]{Lange} or  \cite[Theorem 6.1.8]{Demleitner-thesis},  which tells us  that the associated analytic representation contains the trivial representation. 
\end{proof}

\begin{proof}[Proof of Theorem \ref{theorem-rigid-hyperelliptic} (b)]
In analogy to part (a), we  use the classification of hyperelliptic groups in dimension $4$ achieved in \cite{Demleitner-thesis}.  For each of the $79$ holonomy groups $G$, we check
the existence of a representation $\rho \colon G \to \GL(4,\mathbb C)$ with the following properties:
\begin{enumerate}
\item $\rho$ is faithful  (Remark \ref{basicAnalytic} (1)),
\item  each representation matrix $\rho(g)$ contains 1 as an eigenvalue (Remark \ref{basicAnalytic} (2)),
\item the characteristic polynomial of $\rho(g) \oplus \overline{\rho(g)}$ is in $\mathbb Z[x]$ for all $g\in G$ (integral representation),
\item the character $\chi$  of $\rho$ and its conjugate $\overline{\chi}$ do not contain common irreducible characters 
(Corollary \ref{ConjRig}).
\end{enumerate}
To verify the existence of such a representation,  only the character table of $G$ is needed. Clearly, the kernel of a 
representation $\rho$ is equal to the kernel of its character: 
\[
\ker(\chi):= \lbrace g \in G ~ | ~ \chi(g)=\chi(1) \rbrace.
\]
Moreover, it is well-known that the characteristic polynomial $\rho(g)$ can be determined from the character values $\chi(g^i)$ 
thanks to   the Newton identities. 
We use MAGMA to run through the 
$79$ groups and check if there is a representation $\rho$ satisfying  conditions  (1) -- (4). We find that the only groups admitting such a representation are $\ZZ_3^2$ and $\Heis(3)$.
\end{proof}

\begin{rem}\label{ComplRepHe}
We recall that $\Heis(3)$ has exactly two non-equivalent irreducible representations of dimension $3$: the representation $\rho_3$, given by
\begin{align}\label{rho3}
\rho_3(g) = \begin{pmatrix}
	0 & 0 & 1 \\ 1 & 0 & 0 \\ 0 & 1 & 0 
	\end{pmatrix}, \qquad \rho_3(h) = \begin{pmatrix}
	1 & & \\ & \zeta_3^2 & \\ && \zeta_3
	\end{pmatrix}, \qquad \rho_3(k) = \begin{pmatrix}
	\zeta_3 && \\ & \zeta_3 & \\ && \zeta_3
	\end{pmatrix}
\end{align}
and its complex conjugate $\overline{\rho_3}$. Moreover, there are nine $1$-dimensional representations obtained from the central quotient $\mathbb Z_3^2$ 
by inflation. 
\end{rem}

\begin{rem}\label{analyticreps}
Analyzing  the output of the MAGMA computation in the proof of Theorem \ref{theorem-rigid-hyperelliptic} (b), we see that after application of  a suitable automorphism of $G$,  the analytic representation of a rigid hyperelliptic fourfold $X=T/G$ is equivalent to:  
\begin{itemize}
\item
$\rho(a,b) =\diag(\zeta_3^a, \ \zeta_3^b, \ \zeta_3^{2a+b}, \ \zeta_3^{a+b})$  if $G=\mathbb Z_3^2$ and
\medskip
\item 
$\rho=\rho_1\oplus\rho_3$ if  $G=\Heis(3)$.  Here, $\rho_1$ is  defined by  $\rho_1(g):=1$ and $\rho_1(h):=\zeta_3$. 
\end{itemize}
\end{rem}

As a consequence of Remark \ref{analyticreps}, all rigid hyperelliptic fourfolds with a fixed holonomy group have the same Hodge numbers:  

\begin{cor}
	The Hodge numbers of a rigid hyperelliptic fourfold  $X=T/G$ are:  
	\medskip
	\begin{center}
		\begin{tabular}{ccccc} 
			$\begin{matrix}
			&   &  &  & 1 &  &  &  &  \\
			&   &  & 0 &  & 0 &  &  &  \\
			&   & 0 &  & 2 &  & 0  &  & \\
			&  1 &  & 1 &   & 1 &   &   1 &  \\
			0&    & 0 &  & 2  &  & 0  &     &  0 \\
			\end{matrix}$  
			& 	& and	& &
			
			$\begin{matrix}
			&   &  &  & 1 &  &  &  &  \\
			&   &  & 0 &  & 0 &  &  &  \\
			&   & 0 &  & 4 &  & 0  &  & \\
			&  1 &  & 3 &   & 3 &   &   1 &  \\
			0&    & 0 &  & 6  &  & 0  &     &  0 \\
			\end{matrix}$ \\ 
			\\ 
			{\scriptsize $G=\Heis(3)$} & & & & {\scriptsize $G=\mathbb Z_3^2$}  
		\end{tabular}
	\end{center}
\end{cor}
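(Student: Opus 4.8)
The plan is to read off all the Hodge numbers directly from the analytic representation $\rho$, which is legitimate because the Hodge numbers of $X = T/G$ depend only on $\rho$ up to equivalence. Since the quotient map $\pi \colon T \to X$ is unramified, $\pi^*$ identifies $H^q(X,\Omega_X^p)$ with $H^q(T,\Omega_T^p)^G$. On a complex torus $T = V/\Lambda$ the holomorphic cotangent bundle is trivial, $\Omega_T^1 \cong \mathcal O_T \otimes_\CC V^\vee$, and Dolbeault theory together with $H^q(T,\mathcal O_T) \cong \wedge^q\overline{V^\vee}$ gives $H^q(T,\Omega_T^p) \cong \wedge^p V^\vee \otimes_\CC \wedge^q \overline{V^\vee}$ as $G$-modules, with $G$ acting through $\wedge^p\overline\rho \otimes \wedge^q\rho$ (compare the proof of Corollary \ref{ConjRig}, which carries out the analogous analysis for $\Theta_T$). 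Taking invariants and using orthogonality of characters, one obtains
\[
h^{p,q}(X) = \bigl\langle \wedge^p\chi_\rho,\ \wedge^q\chi_\rho \bigr\rangle,
\]
the number of common irreducible constituents, counted with multiplicity, of $\wedge^p\rho$ and $\wedge^q\rho$. By Remark \ref{analyticreps} we may, after composing $\rho$ with a suitable automorphism of $G$ (which changes no $h^{p,q}$), assume $\rho$ is the explicit representation given there; it then remains to decompose its exterior powers.

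For $G = \mathbb Z_3^2$ the representation $\rho$ is diagonal, with weights the four characters $w_1,\dots,w_4 \in \widehat{\mathbb Z_3^2}$ occurring in $\diag(\zeta_3^a,\zeta_3^b,\zeta_3^{2a+b},\zeta_3^{a+b})$. Hence $\wedge^p\rho = \bigoplus_{|S|=p} \chi_S$ with $\chi_S := \sum_{i\in S} w_i$ for $S \subseteq \{1,2,3,4\}$, and $h^{p,q}(X)$ is the number of pairs $(S,T)$ with $|S|=p$, $|T|=q$ and $\chi_S = \chi_T$ in $\widehat{\mathbb Z_3^2}$. The relevant data is the finite list of the twelve weight-sums $\chi_S$ ($|S| = 0,\dots,4$); tabulating it and counting coincidences yields the Hodge diamond on the right.

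For $G = \Heis(3)$ we have $\rho = \rho_1 \oplus \rho_3$ with $\rho_1,\rho_3$ as in Remark \ref{ComplRepHe}, so the identity $\wedge^p(\rho_1 \oplus \rho_3) = \bigoplus_{i+j=p} \wedge^i\rho_1 \otimes \wedge^j\rho_3$ reduces the computation to the exterior powers of $\rho_3$. From the matrices in Remark \ref{ComplRepHe} one sees $\det\rho_3 = \triv$, whence $\wedge^3\rho_3 \cong \triv$ and $\wedge^2\rho_3 \cong \rho_3^\vee \otimes \det\rho_3 \cong \overline{\rho_3}$; moreover $\rho_1 \otimes \rho_3$ is an irreducible $3$-dimensional representation whose central character agrees with that of $\rho_3$ (because $\rho_1$ kills the center), so $\rho_1 \otimes \rho_3 \cong \rho_3$, the two $3$-dimensional irreducibles being distinguished by their central characters. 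This gives
\[
\wedge^0\rho = \triv, \quad \wedge^1\rho = \rho_1 \oplus \rho_3, \quad \wedge^2\rho = \rho_3 \oplus \overline{\rho_3}, \quad \wedge^3\rho = \triv \oplus \overline{\rho_3}, \quad \wedge^4\rho = \rho_1,
\]
and plugging into $h^{p,q}(X) = \langle \wedge^p\chi_\rho, \wedge^q\chi_\rho\rangle$ via Schur's lemma produces the Hodge diamond on the left; the remaining entries of both diamonds follow from the symmetry $h^{p,q} = h^{q,p}$ and Serre duality $h^{p,q} = h^{4-p,4-q}$, which also serves to double-check the computation.

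I do not expect any genuine obstacle: once Remark \ref{analyticreps} has pinned down $\rho$, the argument is an entirely mechanical, finite character computation. The only points that need a little care are bookkeeping the roles of $\rho$, $\overline\rho$ and their duals when translating from the torus to its Dolbeault cohomology, and — in the $\mathbb Z_3^2$ case — carrying out the tabulation of weight-sums without arithmetic slips modulo $3$.
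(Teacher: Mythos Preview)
Your proposal is correct and follows essentially the same route as the paper: both identify $h^{p,q}(X)$ with the multiplicity of the trivial representation in $\wedge^p\overline\rho\otimes\wedge^q\rho$ and evaluate this via the exterior-power identities $\wedge^3\rho_3\cong\chi_{\triv}$ and $\wedge^2\rho_3\cong\overline{\rho_3}$ for $\Heis(3)$. The only organizational difference is that for $G=\ZZ_3^2$ the paper obtains the numbers by restricting the $\Heis(3)$ characters to the subgroup $\langle h,k\rangle$, whereas you tabulate the weight-sums of the diagonal representation directly; both amount to the same finite check.
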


\begin{proof}
	We consider the case $G=\Heis(3)$ first. 
	Since the action of $\Heis(3)$ on $T$ is free,  the Hodge numbers of $X$  are given as 
	\[
	h^{p,q}(X)=\dim_{\mathbb C}(H^{p,q}(T)^{\Heis(3)}).  
	\]
	They are the multiplicities  of the trivial representation  in 
	\[
	\psi_{p,q} \colon \Heis(3) \to \GL\big(H^{p,q}(T)\big), \qquad u \mapsto [\omega \mapsto \rho(u^{-1})^{\ast} \omega]. 
	\]
	According to Remark \ref{analyticreps}, we may assume that  
	$\rho=\rho_1\oplus \rho_3$. 
	Then the character of  $\psi_{p,q}$ is given by: 
	\[
	\chi_{p,q} = \sum_{\substack {s_1+s_2 =p \\ t_1 + t_2  =q}} \wedge^{s_1}(\overline{\chi_1}) \wedge^{t_1}(\chi_1) \cdot \wedge^{s_2} (\overline{\chi_3}) 
	\wedge^{t_2} (\chi_3), \qquad \makebox{where \quad  $\chi_i:=$ character of  $\rho_i$}.
	\]
	Now, the Hodge numbers $h^{p,q}(X)= \langle \chi_{p,q}, \chi_{\triv} \rangle$  can be  easily computed using the formulae for 
	$\chi_{p,q}$
	and  the identities
	\[
	\wedge^3(\chi_3)=\det(\rho_3)= \chi_{\triv} \quad \makebox{and} \quad  \wedge^2(\chi_3)=\overline{\chi_3}.
	\]
	To determine the Hodge numbers  $h^{p,q}$  in the  $G=\mathbb Z_3^2$ case, we restrict the characters $ \chi_{p,q} $ to the subgroup $\langle h,k \rangle \leq \Heis(3)$, which we identify with $\mathbb Z_3^2$ by the isomorphism $h \mapsto (1,0)$, $k \mapsto (0,1)$. We then compute the inner product of these characters with the trivial character of $\mathbb Z_3^2$. 
\end{proof}

\begin{prop} \label{He3-rigid}
	Any hyperelliptic fourfold with holonomy group $\Heis(3)$ is rigid. 
\end{prop}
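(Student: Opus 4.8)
The plan is to reduce rigidity to Corollary \ref{ConjRig}: it suffices to prove that, for any hyperelliptic fourfold $X = T/G$ with $G \cong \Heis(3)$ and $T = V/\Lambda$, the analytic representation $\rho$ and its conjugate $\overline{\rho}$ share no irreducible subrepresentation. First I would pin down the possible shape of $\rho$. By Remark \ref{basicAnalytic}(1) the representation $\rho$ is faithful, hence non-trivial on the centre $\langle k\rangle$; since by Remark \ref{ComplRepHe} every $1$-dimensional representation of $\Heis(3)$ is inflated from $\Heis(3)/\langle k\rangle \cong \ZZ_3^2$, the representation $\rho$ cannot be a sum of $1$-dimensional representations. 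As $\dim\rho = 4$ and (again Remark \ref{ComplRepHe}) the only irreducible representations of $\Heis(3)$ of dimension $>1$ are the two $3$-dimensional ones $\rho_3$ and $\overline{\rho_3}$, it follows that $\rho \cong \sigma \oplus \chi$ with $\sigma \in \{\rho_3, \overline{\rho_3}\}$ and $\chi$ one-dimensional.

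Next I would analyse when $\rho$ and $\overline{\rho} = \overline{\sigma} \oplus \overline{\chi}$ have a common constituent. Since $\rho_3 \not\cong \overline{\rho_3}$ and the $3$-dimensional and $1$-dimensional pieces cannot coincide, a common constituent is only possible when $\chi \cong \overline{\chi}$. A $1$-dimensional representation of $\Heis(3)$ takes values in $\mu_3$, and a non-trivial one attains the value $\zeta_3 \notin \RR$; hence $\chi \cong \overline{\chi}$ forces $\chi = \triv$. By Corollary \ref{ConjRig}, the proposition is therefore equivalent to the statement that the analytic representation of a hyperelliptic fourfold with holonomy $\Heis(3)$ is never of the form $\sigma \oplus \triv$.

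To exclude this I would use freeness together with the commutator relation $[g,h] = k$. Assume $\rho \cong \sigma \oplus \triv$ and write $V = V_3 \oplus V_1$ for the corresponding $G$-invariant splitting, where $G$ acts on $V_3$ via $\sigma$ and trivially on the line $V_1$. Lift the $G$-action to affine maps $\tilde g(z) = \rho(g)z + \tilde\tau(g)$ of $V$, with $\tilde\tau(g) \in V$ a representative of the translation part, well defined modulo $\Lambda$ (cf. the fixed-point equation in Remark \ref{basicAnalytic}(2)). The subset $V_3 + \Lambda$ is $G$-invariant, so $G$ acts on $\bar T := V/(V_3 + \Lambda)$; as $\rho(g)$ induces the identity on $V/V_3$, this action is by translations, given by a group homomorphism $\bar\tau \colon G \to \bar T$, $g \mapsto \tilde\tau(g) \bmod (V_3 + \Lambda)$. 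Since $\bar T$ is abelian, $\bar\tau$ kills $[G,G] = \langle k\rangle$, so $\tilde\tau(k) \in V_3 + \Lambda$, say $\tilde\tau(k) = v + \lambda$ with $v \in V_3$, $\lambda \in \Lambda$. Because $\sigma(k)$ is multiplication by a primitive cube root of unity, $\sigma(k) - \id$ is invertible on $V_3$; setting $z := (\sigma(k) - \id)^{-1}(-v) \in V_3$ one computes $\rho(k)z + \tilde\tau(k) = (z - v) + (v + \lambda) = z + \lambda$, so $z + \Lambda$ is a fixed point of $k \neq 1$ on $T$, contradicting freeness. Hence $\chi$ is non-trivial, $\rho$ and $\overline{\rho}$ have no common constituent, and $X$ is rigid by Corollary \ref{ConjRig}.

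The main obstacle is the last paragraph: one must keep careful track of the fact that the translation parts $\tilde\tau(g)$ are only defined modulo $\Lambda$, verify that the induced action on $\bar T$ really is by translations so that $\bar\tau$ is an honest homomorphism (not merely a crossed homomorphism), and carry out the fixed-point computation; but this is exactly the elementary bookkeeping already used in Remark \ref{basicAnalytic}(2), and everything else is character theory of $\Heis(3)$.
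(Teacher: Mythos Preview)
Your proof is correct and follows essentially the same route as the paper: decompose $\rho$ as a $3$-dimensional irreducible plus a $1$-dimensional character using faithfulness, reduce rigidity via Corollary~\ref{ConjRig} to showing the $1$-dimensional summand is non-trivial, and deduce the latter from freeness. The only difference is that the paper outsources the key step ``freeness forces $\chi\neq\triv$'' to \cite[Part III, Lemma~8.19]{Demleitner-thesis}, whereas you supply a self-contained argument exploiting $k=[g,h]$: passing to the quotient $V/(V_3+\Lambda)$ turns the cocycle $\tau$ into a genuine homomorphism, which must kill the commutator $k$, and then the invertibility of $\sigma(k)-\id$ on $V_3$ produces a fixed point of $k$. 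This direct argument is a nice addition, since it makes the proposition independent of the external reference.
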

\begin{proof}
	Since the associated analytic representation $\rho \colon \Heis(3) \to \GL(4,\mathbb C)$ is faithful, it decomposes in a one dimensional and an irreducible $3$-dimensional representation. The freeness of   the action  implies that the  $1$-dimensional summand is not trivial,  \cite[Proposition 10.1.21]{Demleitner-thesis}. Corollary \ref{ConjRig} implies the rigidity. 
\end{proof}

\section{Bieberbach's Structure Theorems  and their Geometric Consequences} \label{bieberer-section}

In this section, we consider hyperelliptic manifolds from the differential geometric point of view as compact flat Riemannian manifolds
or, equivalently,  as quotients of Euclidean spaces  by Bieberbach groups. 
Using Bieberbach's  structure  theorems, we translate  the problem of biholomorphic, diffeomorphic  and  homeomorphic classification of hyperelliptic manifolds into a group theoretical problem. This will play a crucial role in the proof of our main theorem because in our setting,  the latter  can be solved algorithmically.  

\begin{rem}\label{FundBie} \
\begin{enumerate}[ref=(\theenumi)]
\item \label{FundBie1} In the sequel, we often identify $\mathbb R^{2n}$ with $\mathbb C^n$ via 
\[
\qquad (x_1, y_1, \ldots, x_n,y_n) \mapsto (z_1, \ldots, z_n), \quad \makebox{where} \quad z_i = x_i + \sqrt{-1}y_i. 
\]
In this way we can view any 
complex representation $\rho \colon G \to \GL(n,\mathbb C)$ as a real representation $\rho_{\mathbb R} \colon G \to \GL(2n,\mathbb R)$. 
Over $\mathbb C$, the representation  $\rho_{\mathbb R}$ decomposes into $\rho \oplus \overline{\rho}$.  
\item 
A matrix $A\in \GL(2n,\mathbb R)$ induces, under the  identification in (1), a bijection  $f_A\colon \mathbb C^n \to \mathbb C^n$ which is in general only $\mathbb R$-linear. 
We say that the matrix $A$ is $\mathbb C$-linear, if $f_A$ is $\mathbb C$-linear, and $\mathbb C$-antilinear if $f_A$ is 
$\mathbb C$-antilinear, i.e., $f_A(\lambda v)=\overline{\lambda}f_A(v)$ for all $\lambda \in \mathbb C$ and $v \in \mathbb C^n$. 
\item
The fundamental group of a hyperelliptic manifold $X=T/G$ is isomorphic to  
the group of deck  transformations $\Gamma$ of the universal cover 
\[
\mathbb C^n \to T \to X.
\]
It  consists of all lifts of the elements of  $G$ to $\mathbb C^n$ and is therefore a cocompact, free and discrete group of affine transformations. 
More precisely, since $G$ is finite, we may assume that the analytic representation  $\rho$ is unitary.  Then, we can consider  $\Gamma$ as a  subgroup of 
$\mathbb C^n \rtimes U(n)$. 
The identification in (1) 
allows us to view $\rho$ as a real representation $\rho_{\mathbb R} \colon G \to O(2n)$ and $\Gamma$ as a subgroup of the group of Euclidean motions $\mathbb E(2n)$. 
As   the action of $G$ on $T$ does not contain translations, the lattice $\Lambda$ of the torus $T$ is equal to the intersection  
$\Gamma \cap \mathbb C^n$, which is the translation subgroup of $\Gamma$. 
\end{enumerate}
\end{rem}

\begin{definition}
A discrete cocompact subgroup of $\mathbb{E}(n)$ is called a {\em crystallographic group}.  
A {\em Bieberbach group} is a torsion free crystallographic group. 
\end{definition}

\noindent
The above remark tells us that the fundamental group of a rigid  hyperelliptic  manifold  with holonomy group $G$ is a Bieberbach group and, with the above notation,
$\Gamma/\Lambda \simeq G$. 

\begin{rem}
Given a Bieberbach group $\Gamma \leq \mathbb E(n)$, 
the quotient $X=\mathbb R^n/\Gamma$ is a differentiable manifold. It inherits a natural Riemannian metric,  induced by the Euclidean inner product, and is therefore flat, i.e.,  locally isometric to $\mathbb R^n$. Conversely, any compact flat Riemannian manifold  is isometric to a quotient of $\mathbb R^n$ by a Bieberbach group. 
\end{rem}

In the sequel, we will use  the classical structure theorems of crystallographic and  Bieberbach groups, see \cite[Chapter I]{Charlap} and \cite[Theorem 2.1]{Szczepanski}. We would like to remark that  the classical reference \cite{Charlap} applies the theory to compact flat Riemannian manifolds only, whereas the book \cite{Szczepanski}  studies, in addition to other recent developments, also hyperelliptic and flat K\"ahler manifolds, respectively.

\begin{theorem}[Bieberbach's structure theorems]\
\begin{enumerate}
\item The translation subgroup $\Lambda:=\Gamma \cap \mathbb R^n$ of a crystallographic group  $\Gamma \leq \mathbb{E}(n)$ is a 
lattice  of rank $n$ and $\Gamma/\Lambda$ is finite.  All  other normal Abelian subgroups of $\Gamma$ are contained in $\Lambda$.

\item Let  $\Gamma, \Gamma' \leq \mathbb{E}(n)$ be two crystallographic groups and $\gamma \colon \Gamma \to \Gamma'$ be an isomorphism. Then, there  exists 
an affine transformation  
 $\alpha \in \AGL(n,\mathbb R)$ such that $\gamma(g)=\alpha \circ g \circ \alpha^{-1}$ for all $g\in \Gamma$.  

\item In each dimension,  there are only finitely many isomorphism classes of crystallographic groups. 
\end{enumerate}
\end{theorem}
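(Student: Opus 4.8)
These three statements are the classical structure theorems of Bieberbach and Zassenhaus, and in the paper they will simply be quoted from \cite[Chapter~I]{Charlap}; here I only sketch how one would prove them. For the first theorem the plan is to reduce everything to the finiteness of the point group, that is, of the image of the linear-part homomorphism $r\colon\Gamma\to O(n)$, whose kernel is exactly $\Lambda=\Gamma\cap\mathbb{R}^n$. Granting that $r(\Gamma)$ is finite, $\Lambda$ has finite index in $\Gamma$, hence is again cocompact in $\mathbb{E}(n)$, and since $O(n)$ is compact the quotient $\mathbb{R}^n/\Lambda$ is compact; a discrete cocompact subgroup of $\mathbb{R}^n$ is automatically a lattice of full rank $n$, and $\Gamma/\Lambda\cong r(\Gamma)$ is then finite. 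The hard part — the one place where I expect essentially all the difficulty to sit — is the finiteness of $r(\Gamma)$, which I would establish by the commutator / Zassenhaus-neighbourhood technique: fix a small neighbourhood $U$ of $I$ in $O(n)$ and a radius $D>0$ such that whenever $\gamma_i=(b_i,B_i)\in\Gamma$ satisfy $B_i\in U$ and $|b_i|<D$ one has the commutator estimate $\|[B_1,B_2]-I\|\le c\,\|B_1-I\|$ for a fixed $c<1$ and the translation part of $[\gamma_1,\gamma_2]$ again has length $<D$; then the (finite, by discreteness) set of such $\gamma$ generates a nilpotent subgroup, because iterated commutators drive the linear parts toward $I$, one extracts the translation lattice from this subgroup, and finiteness of $r(\Gamma)$ follows from a covering/volume estimate using cocompactness. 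Everything else in the first theorem is soft.

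The remaining assertion of the first theorem, that every normal abelian subgroup of $\Gamma$ lies in $\Lambda$, I would prove directly once $\Lambda$ is known to be a full lattice: if $A\leq\Gamma$ is normal and abelian and $a=(b,B)\in A$, then for each $\lambda=(v,I)\in\Lambda$ the commutator $[\lambda,a]=((I-B)v,I)$ lies in $A$ by normality and hence commutes with $a$; writing out this commutation relation gives $(I-B)^2v=0$, and since $B\in O(n)$ the operator $I-B$ is semisimple, so already $(I-B)v=0$. As $\Lambda$ spans $\mathbb{R}^n$ this forces $B=I$, i.e.\ $a\in\Gamma\cap\mathbb{R}^n=\Lambda$. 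In particular $\Lambda$ is the unique maximal normal abelian subgroup of $\Gamma$ and is therefore characteristic — which is exactly what the second theorem needs.

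For the second theorem I would argue as follows. Given an isomorphism $\gamma\colon\Gamma\to\Gamma'$, the fact that $\Lambda,\Lambda'$ are characteristic gives $\gamma(\Lambda)=\Lambda'$, so $\gamma|_\Lambda$ is the restriction of an $\mathbb{R}$-linear isomorphism $L\colon\mathbb{R}^n\to\mathbb{R}^n$ and $\gamma$ descends to $\bar\gamma\colon G\to G'$. Applying $\gamma$ to $g\lambda g^{-1}$ for $\lambda\in\Lambda$ and comparing linear parts yields $L\,r(g)\,L^{-1}=r'(\gamma(g))$ for all $g\in\Gamma$, so for any affine map $\alpha=(a,L)$ the linear parts of $\alpha g\alpha^{-1}$ and of $\gamma(g)$ already coincide. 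Comparing translation parts turns the desired identity $\gamma(g)=\alpha g\alpha^{-1}$ into $(I-r'(\gamma(g)))\,a=c(g)$, where $c(g):=\tau'(\gamma(g))-L\,\tau(g)$ and $\tau,\tau'$ denote translation parts. A short check shows that $c$ is a $1$-cocycle for the action of the finite group $G$ on $\mathbb{R}^n$ through $g\mapsto r'(\gamma(g))$ and that $c$ vanishes on $\Lambda$, hence factors through $G$. Since $G$ is finite and $\mathbb{R}^n$ is a $\mathbb{Q}$-vector space, $H^1(G,\mathbb{R}^n)=0$, so $c$ is a coboundary $c(g)=(I-r'(\gamma(g)))\,a$; this $a$ provides the required $\alpha=(a,L)\in\Aff(n,\mathbb{R})$. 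Note that $\alpha$ need only be affine, not an isometry, which is all that is asserted.

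For the third theorem, the first theorem exhibits an $n$-dimensional crystallographic group $\Gamma$ as an extension $1\to\mathbb{Z}^n\to\Gamma\to G\to1$ with $G$ finite, and the induced action $\phi\colon G\to\GL_n(\mathbb{Z})$ is faithful, since its kernel would centralise the full-rank lattice $\Lambda$ and hence be trivial. The isomorphism type of $\Gamma$ is then recovered from the triple given by $G$, the conjugacy class of $\phi$, and the extension class $[\Gamma]\in H^2(G,\mathbb{Z}^n)$. By the Jordan--Zassenhaus theorem there are only finitely many conjugacy classes of finite subgroups of $\GL_n(\mathbb{Z})$, which bounds the possibilities for $(G,\phi)$; and for each such pair $H^2(G,\mathbb{Z}^n)$ is a finite group, being finitely generated and annihilated by $|G|$. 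Hence only finitely many triples occur, so there are only finitely many isomorphism classes of $n$-dimensional crystallographic groups; the fact that distinct triples may still yield isomorphic groups only decreases the count.
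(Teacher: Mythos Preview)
Your anticipation is correct: the paper does not prove this theorem at all but simply states it and refers the reader to \cite[Chapter~I]{Charlap}; there is therefore no ``paper's proof'' to compare against. The sketch you give is the standard argument and is essentially what one finds in Charlap: the Zassenhaus commutator estimate to force finiteness of the point group, the semisimplicity argument $(I-B)^2v=0\Rightarrow(I-B)v=0$ to show every normal abelian subgroup sits in $\Lambda$, the $H^1(G,\mathbb{R}^n)=0$ cocycle argument for Bieberbach~II, and Jordan--Zassenhaus together with finiteness of $H^2(G,\mathbb{Z}^n)$ for Bieberbach~III. All steps are correct as stated; the only place where real work is hidden is, as you say, the finiteness of $r(\Gamma)$, and your description of that step is accurate if terse.
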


\noindent
In our setting, these theorems have the  following  consequences (cf. \cite[Section 3]{HalendaLutowski}):

\begin{cor}\label{GeometricBieber} 
Let $\Phi \colon G \to \Aut(T)$ and $\Phi' \colon G' \to \Aut(T')$  be  free and translation free holomorphic actions of finite groups $G$ and $G'$. Assume there is an isomorphism 
$\gamma \colon \Gamma \to \Gamma'$ between the fundamental groups of  $X=T/G$ and $X'=T'/G'$.
\begin{enumerate}
\item
We view the groups $\Gamma$ and $\Gamma'$ as Bieberbach groups in $\mathbb E(2n)$.  Thanks to  Bieberbach's  first  theorem, $\gamma$ restricts to an isomorphism 
$\gamma \colon \Lambda \to \Lambda'$, and it follows that $G$ and $G'$ are isomorphic.   
\item According to Bieberbach's  second theorem, $\gamma$ is  the  conjugation by an 
affinity
\[
\alpha(x)=Ax+d, \qquad A \in \GL(2n,\mathbb R), ~~ d \in \mathbb R^{2n}. 
\]
In particular, $\Lambda'=A\cdot \Lambda$.
The affinity 
$\alpha$ induces  diffeomorphisms $\widehat{\alpha}$ and $\widetilde{\alpha}$, 
such that the following diagram  commutes: 
\[
\xymatrix{
	T   \ar[d]\ar[r]^{\widetilde{\alpha}} & T'  \ar[d] \\
	X \ar[r]^{\widehat{\alpha}} & X'.}
\]
This holds in particular for $\gamma=f_{\ast}$, where $f\colon X_1 \to X_2$ is a homeomorphism. 
%
%
\item
There is an isomorphism  $\varphi \colon G \to G'$ such that 
\[
A\Phi(u)(x)+d=\Phi'\big(\varphi(u)\big)(Ax+d) \qquad \makebox{for all}  \quad u   \in G \quad \makebox{and} \quad  x \in T.  
\]
This comes from the fact that $\Phi(G)$ and $\phi'(G')$ are conjugated by $\widetilde{\alpha}$.
\item If $f \colon X \to X'$  is biholomorphic, then it lifts to a biholomorphism of the tori, i.e., it is induced by 
 an affinity 
 \[
 \alpha(x)=Ax+d, \qquad \makebox{where} \qquad A \in \GL(n,\mathbb C).
\]
\end{enumerate}
\end{cor}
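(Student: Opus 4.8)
The plan is to read off all four assertions from Bieberbach's first and second structure theorems. For (1), I would use that, by Bieberbach's first theorem, every normal abelian subgroup of a crystallographic group is contained in its translation lattice; since that lattice is itself normal and abelian, it is the \emph{unique} maximal normal abelian subgroup. Hence the abstract isomorphism $\gamma\colon\Gamma\to\Gamma'$ must carry $\Lambda$ onto $\Lambda'$, so it restricts to an isomorphism $\Lambda\to\Lambda'$ and descends to an isomorphism $\varphi\colon G=\Gamma/\Lambda\to\Gamma'/\Lambda'=G'$.

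For (2) and (3), I would feed $\gamma$, regarded as an isomorphism between the crystallographic subgroups $\Gamma,\Gamma'\leq\mathbb E(2n)$, into Bieberbach's second theorem: it is conjugation by an affinity $\alpha(x)=Ax+d$. Conjugating the translation by $\lambda\in\Lambda$ with $\alpha$ gives the translation by $A\lambda$, so $\Lambda'=\gamma(\Lambda)=A\Lambda$; and since $\alpha$ sends $\Lambda$-cosets to $\Lambda'$-cosets and conjugates $\Gamma$ to $\Gamma'$, it descends first to a diffeomorphism $\widetilde\alpha\colon T\to T'$ and then to $\widehat\alpha\colon X\to X'$, the stated square commuting by construction; specializing to $\gamma=f_\ast$ for a homeomorphism $f\colon X\to X'$ then yields the final sentence of (2). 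For (3), I would choose for each $u\in G$ a lift $\widehat u\in\Gamma$, observe that $\gamma(\widehat u)$ is a lift of $\varphi(u)$ and that the action of $u$ on $T$ (resp. of $\varphi(u)$ on $T'$) is the one induced by $\widehat u$ (resp. $\gamma(\widehat u)$) on $\mathbb C^n$, and push the identity $\alpha\circ\widehat u=\gamma(\widehat u)\circ\alpha$ down from $\mathbb C^n$ to $T$; written out in coordinates this is precisely $A\Phi(u)(x)+d=\Phi'(\varphi(u))(Ax+d)$ for all $u\in G$.

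For (4), I would lift the biholomorphism $f\colon X\to X'$ to a biholomorphic automorphism $\widetilde f$ of the common universal cover $\mathbb C^n$ that conjugates $\Gamma$ to $\Gamma'$, hence, by (1), also $\Lambda$ to $\Lambda'$. Then $\widetilde f(z+\lambda)=\widetilde f(z)+\lambda'$ for each $\lambda\in\Lambda$ and a suitable $\lambda'\in\Lambda'$, so the Jacobian $D\widetilde f$ is $\Lambda$-periodic; being a bounded holomorphic map on $\mathbb C^n$, it is constant by Liouville, whence $\widetilde f(z)=Az+d$ with $A\in\GL(n,\mathbb C)$. Once the structure theorems are granted every step is short; the point deserving the most care is the bookkeeping running through (1)--(3) --- one must make sure that the single isomorphism $\varphi\colon G\to G'$ induced by $\gamma$ on quotients is the one governing the descent to $T$, to $X$, and the equivariance formula of (3), i.e. that $\alpha$ (respectively the lift $\widetilde f$ in (4)) is chosen compatibly with $\gamma$ throughout. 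The only genuinely analytic ingredient is the Liouville argument in (4).
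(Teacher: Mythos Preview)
Your proof is correct and follows exactly the approach the paper intends: the corollary is stated there without proof, merely as a consequence of Bieberbach's structure theorems (with a reference to \cite{HalendaLutowski}), and you have filled in precisely those details. The characterization of $\Lambda$ as the unique maximal normal abelian subgroup in (1), the descent argument in (2)--(3), and the Liouville argument for (4) are all standard and correctly carried out.
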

As a consequence, in the category of hyperelliptic manifolds,  or more generally compact flat Riemannian manifolds,  the 
 following equivalence relations coincide: 
\begin{center}
\emph{isomorphic fundamental groups, \quad homeomorphic, \quad diffeomorphic \quad} and \emph{\quad affine diffeomorphic.} 
\end{center}
In view of the classification problem, we may  restrict our attention to  affine diffeomorphisms and biholomorphic maps.  
According to Corollary \ref{GeometricBieber} (1),  it suffices to consider 
hyperelliptic manifolds $X$, $X'$ with the same holonomy group $G$.

\begin{rem}\label{Con1and2}
Let $f\colon X \to X'$ be a diffeomorphism induced by an affinity $\alpha(x)=Ax+d$. Writing  $\phi(u)(x)=\rho_{\mathbb R}(u)(x) + \tau(u)$ and similarly  $\phi'(u)(x)=\rho'_{\mathbb R}(u)(x) + \tau'(u)$, 
the equation in Corollary  \ref{GeometricBieber} (3) is equivalent to 
\[
(1) ~  A\rho_{\mathbb R}(u)A^{-1} =\rho'_{\mathbb R}(\varphi(u)) \qquad  \makebox{and}  \qquad (2) ~
(\rho'_{\mathbb R}(\varphi(u))-I_{2n})d = A\tau(u)-\tau'(\varphi(u)) 
\]
for all $u \in G$. 
Item $(1)$ means that $\rho_{\mathbb R}$ and $\rho'_{\mathbb R} \circ \varphi $ are equivalent as real representations. 
Item $(2)$ is an  equation holding in  the group  $T'$. 
If $f$ is holomorphic, i.e.,  $A$ is $\mathbb C$-linear, then $\rho$ and $\rho' \circ \varphi $ are equivalent as complex representations. 
\end{rem}

\begin{rem} \label{cocycle-rem}
Let $f\colon X \to X'$ be a diffeomorphism induced by $\alpha(x)=Ax+d$. 
\begin{enumerate}
\item 
Since $A\cdot \Lambda = \Lambda'$, condition $(1)$ of  Remark \ref{Con1and2}
 precisely means  that $A$ is contained in 
 \[
 \mathcal N_{\mathbb R}(\Lambda,\Lambda'):=\lbrace A \in \GL(2n,\mathbb R) ~ \big\vert ~ A \cdot \Lambda =\Lambda', ~  A\cdot \im(\rho_{\mathbb R})=\im(\rho'_{\mathbb R}) \cdot A \rbrace.
 \]
 If $f$ is biholomorphic, then $A$ is even contained in the subset 
 \[
 \mathcal N_{\mathbb C}(\Lambda,\Lambda') := \mathcal N_{\mathbb R}(\Lambda,\Lambda') \cap \GL(n,\mathbb C).
 \] 
\item 
In contrast to the linear parts, the  translation parts $\tau \colon G \to T$ and $\tau'\colon G\to T'$ of the actions $\Phi$ and $\Phi'$  are not homomorphisms, but
$1$-cocycles:
\[
\tau(u_1u_2) = \tau(u_1) + \rho(u_1)\tau(u_2), \quad \tau'(u_1u_2) = \tau'(u_1) + \rho'(u_1)\tau'(u_2)
\]
for all $u_1,u_2 \in G$.
Their classes  define  elements in the cohomology groups $H^1(G,T)$ and $H^1(G,T')$. Conversely, a class in 
$H^1(G,T)$ together with $\rho$  defines an action on $T$, which is unique up to conjugation. The class is called \emph{special} if the action is free. 
Condition (2) in  Remark \ref{Con1and2}  has the following cohomological interpretation: replacing $u$ with 
$\varphi^{-1}(u)$, it  reads 
\[
\rho'(u)d-d  = A\tau(\varphi^{-1}(u))-\tau'(u), 
\]
which  just means that the cocycles $A \tau \circ \varphi^{-1}$ and $\tau'$ belong to the same cohomology class. 
\item
In the special case where $\rho=\rho'$  and  $T=T'$, the sets $\mathcal N_{\mathbb R}(\Lambda,\Lambda)$ and 
$\mathcal N_{\mathbb C}(\Lambda,\Lambda)$ are the normalizers of $\im(\rho_{\mathbb R})$ in  $\Aff_0(T)$ and in  $\Aut_0(T)$. 
For simplicity, we denote them by $\mathcal N_{\mathbb R}(\Lambda)$ and $\mathcal N_{\mathbb C}(\Lambda)$. 
By (2), they  act on $H^1(G,T)$ by 
\[
A \ast \tau(u) := A\cdot \tau(\varphi_A^{-1}(u)),
\]
where  $\varphi_A$ is the unique automorphism of $G$ with $A\rho_{\mathbb R} A^{-1} =\rho_{\mathbb R}\circ \varphi_A$.
It follows that $A$ is the linear part of an affine diffeomorphism (or biholomorphism) $X \to X'$  if and only if $A\ast \tau$ and $\tau'$ represent the same  class in $H^1(G,T)$. 
\end{enumerate}
\end{rem}

The strategy to derive  of our main theorem is now clear:

\medskip
\noindent 
{\bf Scheme for the Classification}

\begin{enumerate}
\item \label{scheme-first}
In the first step, we  determine all lattices $\Lambda$ which have a  $G=\Heis(3)$ (or $\mathbb Z_3^2$)  module structure via the 
representation $\rho$ from  Remark \ref{analyticreps}. 
\item 
In the second step we determine for each $T=\mathbb C^4/\Lambda$ and for  all  special cohomology classes in  $H^1(G,T)$ a  
 representative $\tau$. Taking the quotients of $T$ by the actions $$\Phi(u)(z)=\rho(u)z+\tau(u),$$ we obtain 
all possible rigid hyperelliptic fourfolds. 
\item
In the final  step we decide which fourfolds found in the previous step are biholomorphic or diffeomorphic, respectively. 
\end{enumerate}

\begin{rem}\label{SeqRem}
Let $X=T/G$ be a hyperelliptic manifold, where $T=\mathbb C^n/\Lambda$ is a complex torus. The short exact sequence of $G$-modules 
\[
0 \to \Lambda \to \mathbb C^n \to T \to 0 
\]
 induces the long exact sequence
 \[
 \ldots \to H^1(G,\mathbb C^n) \to H^1(G,T) \to H^2(G,\Lambda) \to H^2(G,\mathbb C^n) \to \ldots
 \]
 in group cohomology.  By the vanishing of $H^i(G,\mathbb C^n)$ for $i\geq 1$, we obtain an isomorphism 
$ H^1(G,T) \simeq H^2(G,\Lambda)$.
 Hence, the $1$-cocycle  $\tau$ defining the translation part of the $G$-action on $T$ yields a class in 
 $H^2(G,\Lambda)$, which  corresponds to the extension 
 \[
 0 \to \Lambda \to \Gamma \to G \to 1, \qquad \makebox{where} \qquad \Gamma =\pi_1(X). 
 \]
 The biholomorphism or  diffeomorphism problem of hyperelliptic fourfolds can therefore be reinterpreted in terms of group extensions: 
 Let $X$ and $X'$ be two hyperelliptic manifolds with the same  holonomy group $G$ and let $\tau$ and $\tau'$ be the $2$-cocycles corresponding to 
the extensions 
\[
0 \to \Lambda \to \Gamma \to G \to 1 \qquad \makebox{and} \qquad 0 \to \Lambda' \to \Gamma' \to G \to 1.
\]
 Then, $X$ and $X'$ are biholomorphic (or diffeomorphic) if and only if there exists a matrix $A \in \mathcal N_{\mathbb C}(\Lambda,\Lambda')$
  (or $\mathcal N_{\mathbb R}(\Lambda,\Lambda')$) such that 
 $\tau'$ and $A\ast \tau$ belong to the same cohomology class of $H^2(G,\Lambda')$. Here, the $2$-cocycle $A\ast \tau$ is defined in the obvious way: 
 \[
 A \ast \tau(u_1,u_2) := A\tau(\varphi_A^{-1}(u_1),\varphi_A^{-1}(u_1)). 
 \]
 This amounts to say that $\Gamma$ and $\Gamma'$ are conjugated by an affinity $\alpha(x)=Ax+d$, for a suitable $d$. 
 The conjugation isomorphism induces an isomorphism of short exact sequences: 
 \[
\xymatrix{
0  \ar[r]   & \Lambda  \ar[r] \ar[d]  & \Gamma  \ar[r] \ar[d] & G \ar[r] \ar[d]& 1 \\
0  \ar[r]& \Lambda'  \ar[r]  & \Gamma' \ar[r] & G \ar[r] & 1} 
\] 
 
In view of the discussion above, our strategy parallels the  classification scheme for Bieberbach groups outlined in  \cite[Chapter III, Section 2]{Charlap} and \cite[Section 3.1, p. 30]{Szczepanski}, except for the last step. Here, for sake of the biholomorphic classification, we consider the refined  
equivalence relation of  conjugation of Bieberbach groups with holomorphic  affinities $\alpha(x)=Ax+d$,  i.e., those where  $A$ is $\mathbb C$-linear. 
\end{rem}

\section{Lattices and Cocycles Corresponding to Rigid Hyperelliptic Fourfolds}

We follow our classification strategy and first determine the possible lattices $\Lambda$. 
As a first step, we show that each candidate for $\Lambda$ contains $\mathbb Z[\zeta_3]^4$ as a sublattice of finite index. 

\begin{notation}
In the sequel, we will often consider $\mathbb Z_3^2$ as a subgroup of $\Heis(3)$ by identifying $(1,0)$ with $h$ and $(0,1)$ with $k$. Observe that this is compatible with the analytic representations given in Remark \ref{analyticreps}.
\end{notation}

\begin{prop} \label{isog}
Let $X=T/G$ be a rigid hyperelliptic fourfold, then the torus $T$ is equivariantly isogenous to a product of four elliptic curves $E_i \subset T$, each of which is a copy of the Fermat elliptic curve $\CC/\mathbb Z[\zeta_3]$. 
\end{prop}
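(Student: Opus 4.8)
The plan is to exploit the structure of the analytic representation $\rho$ from Remark \ref{analyticreps}, which in both cases ($G = \mathbb{Z}_3^2$ or $G = \Heis(3)$) is a sum of characters of the central quotient $\mathbb{Z}_3^2$ together with (in the Heisenberg case) the single irreducible 3-dimensional piece $\rho_3$. The key observation is that $\rho_3$ restricted to the index-$3$ abelian subgroup $\langle h, k\rangle \cong \mathbb{Z}_3^2$ already decomposes as a sum of three distinct characters; hence in all cases the restriction of $\rho$ to a suitable $\mathbb{Z}_3^2 \leq G$ is \emph{diagonal} with respect to the standard coordinates $z_1, \dots, z_4$, with pairwise-distinct characters $\zeta_3^a, \zeta_3^b, \zeta_3^{2a+b}, \zeta_3^{a+b}$ (the four characters of $\mathbb{Z}_3^2$ appearing, each with multiplicity one, once one notes these four linear forms in $(a,b)$ are pairwise distinct mod $3$). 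So it suffices to treat $G = \mathbb{Z}_3^2$ acting diagonally.

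First I would fix an element $g_0 \in G$ of order $3$ whose linear part $\rho(g_0)$ has four \emph{distinct} eigenvalues — e.g. $g_0 = (1,0)$, with eigenvalues $\zeta_3, 1, \zeta_3^2, \zeta_3$; wait, those are not distinct, so instead pick $g_0$ so that the four linear forms $a, b, 2a+b, a+b$ evaluated at $g_0$ give as many distinct values as possible. In fact one sees that no single $g_0$ separates all four coordinates, but the pair of commuting elements $(1,0)$ and $(0,1)$ does: the joint eigenspace decomposition of $\langle (1,0),(0,1)\rangle = G$ on $V = \mathbb{C}^4$ has four distinct $1$-dimensional eigenspaces $V_1, \dots, V_4$, precisely the coordinate lines, since the four pairs of eigenvalues $(\zeta_3^a, \zeta_3^b)$, $(\zeta_3^b, \ldots)$ — i.e. the four characters $\chi_1 = (1,0)^*, \chi_2 = (0,1)^*, \chi_3 = (2,1)^*, \chi_4 = (1,1)^*$ of $\mathbb{Z}_3^2$ — are pairwise distinct. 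Then the lattice $\Lambda$ decomposes: since $\rho_\Lambda \otimes \mathbb{C} \cong \rho \oplus \overline{\rho}$ and these eight characters of $\mathbb{Z}_3^2$ need not all be distinct, I would instead argue integrally. The subgroup $\mathbb{Z}[\zeta_3] \subset \mathbb{C}$ is the image of the $\mathbb{Z}[G]$-action, and $\Lambda$ becomes a module over $\mathbb{Z}[\zeta_3]$ (embedded via, say, $g_0 \mapsto \zeta_3$ on an eigenline, more precisely $\mathbb{Z}[G] \to \mathbb{Z}[\zeta_3]$); then $E_i := V_i / (\Lambda \cap V_i)$ is a complex torus of dimension $1$, i.e. an elliptic curve, on which $G$ acts by a character through $\mathbb{Z}[\zeta_3]$, forcing $\End(E_i) \supseteq \mathbb{Z}[\zeta_3]$ and hence $E_i \cong \mathbb{C}/\mathbb{Z}[\zeta_3]$, the Fermat curve (the only elliptic curve with an order-$3$ automorphism).

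The core step is to produce the equivariant isogeny $E_1 \times E_2 \times E_3 \times E_4 \to T$. For this I would use the standard fact that for a complex torus $T = V/\Lambda$ with a finite group $G$ of automorphisms acting on $V$ with $V = \bigoplus V_i$ a decomposition into $G$-subspaces, the natural map $\bigoplus_i (V_i / (\Lambda \cap V_i)) \to T$ is an isogeny provided $\Lambda$ spans $V$ and each $\Lambda \cap V_i$ is a lattice of full rank in $V_i$. The latter holds here because $\Lambda \cap V_i$ contains the image of the projector $p_i = \tfrac{1}{3}\sum_{\sigma} \overline{\chi_i(\sigma)}\,\rho(\sigma)$ applied to $\Lambda$ scaled to land in $\Lambda$: more concretely $(\rho(g) - \zeta_3^{\chi_j(g)})$ applied appropriately kills all but the $i$-th coordinate, so $3\Lambda \cap V_i$, or rather the sublattice $\prod_i (\Lambda \cap V_i)$, has finite index in $\Lambda$ — this is exactly where I would invoke that the four characters are distinct, so the four relevant "Vandermonde-type" operators jointly have determinant a non-zero integer. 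The main obstacle is making this index-finiteness argument clean: one must check $\Lambda \cap V_i \neq 0$ (equivalently that $V_i$ is defined over the field generated by eigenvalues so that $\Lambda$ has nonzero intersection with it), which follows from $\rho_\Lambda \otimes \mathbb{C} \cong \rho \oplus \bar\rho$ containing $\chi_i$ with positive multiplicity, hence the $\chi_i$-isotypic part of $\Lambda \otimes \mathbb{Q}$ is nonzero and rational, and intersecting with the single complex line $V_i$ inside it gives a rank-$2$ real lattice $\Lambda \cap V_i$.

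Finally I would assemble: the map $\psi\colon \prod_{i=1}^4 E_i \to T$ induced by inclusion of the subtori is $G$-equivariant by construction (each $V_i$ is $G$-stable), surjective (its image is a subtorus of full dimension), with finite kernel (since $\prod_i(\Lambda \cap V_i)$ has finite index in $\Lambda$), hence an equivariant isogeny; and each $E_i \cong \mathbb{C}/\mathbb{Z}[\zeta_3]$ as explained. For $G = \Heis(3)$ one first restricts the whole picture to $\langle h, k\rangle \cong \mathbb{Z}_3^2$ to get the splitting of $T$ and the identification of the factors, noting that the isogeny is automatically equivariant for all of $\Heis(3)$ since the subspace decomposition $V = V_1 \oplus V_3$ (eigenline plus $\rho_3$-space) is $\Heis(3)$-stable and $\Lambda \cap V_1$, $\Lambda \cap V_3$ together have finite index; the further splitting of $V_3$ into three lines, while not $\Heis(3)$-stable, is $\langle h,k \rangle$-stable and gives the isogeny to a product of four Fermat curves after restriction, which is all that is claimed. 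I expect the bookkeeping around why $\Lambda \cap V_i$ has full rank — ruling out the pathology where $V_i$ meets $\Lambda$ only in $0$ — to be the one genuinely substantive point, resolved by the $\rho \oplus \bar\rho$ rationality constraint combined with the distinctness of the four characters.
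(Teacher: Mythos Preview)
Your proposal is correct and follows essentially the same strategy as the paper: restrict to $\langle h,k\rangle \cong \mathbb Z_3^2$, decompose $V$ into the four distinct one-dimensional character eigenspaces $V_1,\dots,V_4$, observe that each factor carries a $\zeta_3$-automorphism and is therefore the Fermat curve, and conclude via the addition map.

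The one genuine difference is in how the elliptic curves $E_i$ are produced. You set $E_i := V_i/(\Lambda \cap V_i)$ and then must verify that $\Lambda \cap V_i$ has full rank $2$ in $V_i$ --- which you rightly identify as the only substantive point and sketch via the rationality of the $\chi_i$-isotypic component of $\Lambda \otimes \mathbb Q$. The paper avoids this entirely by defining $E_i$ \emph{intrinsically} as the identity component $\ker(\rho(u_i)-\id_T)^0$ for suitable $u_i \in \{k,h,hk,hk^2\}$; the identity component of a closed subgroup of a compact torus is automatically a subtorus, so compactness and the full-rank condition come for free, and the tangent space is visibly $\ker(\rho(u_i)-\id_V)=V_i$. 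Once the $E_i$ are known to be one-dimensional subtori of $T$, the addition map $E_1\times\cdots\times E_4 \to T$ is a surjective holomorphic homomorphism between four-dimensional tori, hence an isogeny, with no further work.

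So: your route is sound and gives a more explicit hold on the sublattices $\Lambda\cap V_i$, at the cost of the rank argument; the paper's route is shorter because the kernel-component definition makes the subtorus property automatic. Your care about $\Heis(3)$-equivariance (the coordinate lines $V_2,V_3,V_4$ are only $\langle h,k\rangle$-stable, being permuted by $g$) is in fact more explicit than the paper's one-line ``by construction''.
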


\begin{proof}
According to Theorem  \ref{theorem-rigid-hyperelliptic}, the group is $G=\mathbb Z_3^2$ or $\Heis(3)$. Thanks to 
Remark \ref{analyticreps}, we can assume 
\[
\rho(h)=\diag(\zeta_3,\ 1, \ \zeta_3^2, \ \zeta_3) \quad \makebox{and} \quad \rho(k)=\diag(1,\ \zeta_3, \ \zeta_3, \ \zeta_3), 
\]
in both cases $G = \mathbb Z_3^2$ and $\Heis(3)$. 
Consider the following subtori 
\begin{itemize}
\item
$E_1 := \ker(\rho(k) - \id_T)^0$, 
\item
$E_2 := \ker(\rho(h)-\id_T)^0$, 
\item
$E_3 := \ker(\rho(hk)-\id_T)^0$ and  
\item
$E_4 := \ker(\rho(hk^2)-\id_T)^0$.
\end{itemize}
Here, the superscript $0$ denotes the connected component of the identity.
By construction, $\dim(E_i)=1$ and $\zeta_3 \in \Aut(E_i)$, therefore $E_i\simeq \CC/\mathbb Z[\zeta_3]$.
We conclude the proof, because the addition map 
\[
\mu \colon E_1 \times \ldots \times E_4 \to T 
\]
is a surjective holomorphic group homomorphism between tori of the same dimension, i.e., an isogeny (cf. \cite[p. 12]{Birkenhake-Lange}). 
The equivariance of $\mu$ holds  by construction. 
\end{proof}

Proposition \ref{isog} allows us to write $T \simeq (E_1 \times E_2 \times E_3 \times E_4)/K$,
	where $K$ is a finite group of translations. We will usually identify all $E_i$ with the Fermat elliptic curve $E:=\CC/\ZZ[\zeta_3]$ and only keep the indices if they are relevant for our arguments.


Let $T/G$ be a rigid hyperelliptic fourfold. From now on, we assume that $T = E^4/K$ (where $E = \mathbb C/\mathbb Z[\zeta_3]$ is the Fermat elliptic curve) and that the associated analytic representation $\rho$ is given as in Remark \ref{analyticreps}.

\begin{rem}\label{heis-action}
	Let $\Phi \colon G \hookrightarrow \Aut(T)$ be a faithful rigid holomorphic action. 
	\begin{itemize}
	\item
	If $G = \Heis(3)$, then, up to a change of origin in the elliptic curves, the translation part $\tau \colon G \to T$ of $\Phi$ can be written in the form
	\[
		\tau(g)= \left( a_1, \ a_2, \ a_3, \ a_4\right), \quad 
		\tau(h)= \left(0, \ b_2, \ b_3, \ b_4\right), \quad
		\tau(k)= \left(c_1, \ 0, \ 0, \ 0\right).
	\]
\item 
If $G=\mathbb Z_3^2=\langle h,k\rangle$, then the translation part of $\Phi$ can be written as 
	\[
		\tau(h)= \left(0, \ b_2, \ b_3, \ b_4\right), \quad
		\tau(k)= \left(c_1, \ 0, \ 0, \ 0\right).
	\]
\end{itemize}	
	We call an action with such a translation part an action in 
\emph{standard form}.  Sometimes, we may write  $u(z)$ instead of $\Phi(u)(z)$ for $u \in G$ and $z \in T$, by a slight abuse of notation. 
\end{rem}

Recall the presentation \ref{present-he3} of $\Heis(3)$ given in Theorem \ref{theorem-rigid-hyperelliptic}.

\begin{lemma} \label{well-defined}
	Let $\Phi \colon G \hookrightarrow \Aut(T)$ be a  faithful rigid holomorphic action in standard form. Then:
	\begin{enumerate}
	\item If $G = \mathbb Z_3^2$, the following three elements are zero in $T$:	
	\begin{itemize}
	\item[]  $v_1 := \left((\zeta_3-1)c_1, \ (1-\zeta_3)b_2, \ (1-\zeta_3)b_3, \ (1-\zeta_3)b_4 \right)$,
	\item[] $v_2 := \left(0, \ 3b_2, \ 0, \ 0\right)$,
	\item[] $v_3 := \left(3c_1, \ 0, \ 0,\ 0\right)$.
	\end{itemize}
Conversely, given $b_j$ and $c_1$ such that $v_1, v_2, v_3$  are zero in $T$, we obtain a 
	faithful holomorphic action of $\mathbb Z_3^2$ on $T$ in standard form.
	\item If $G = \Heis(3)$, the elements $v_1, \ldots, v_6$ are zero in $T$, where
	\begin{itemize}
	\item[] $v_4 := \left((1-\zeta_3)a_1-c_1, \ b_4-\zeta_3b_2 + (1-\zeta_3)a_2, \ b_2-\zeta_3b_3, \ b_3-\zeta_3b_4 + (1-\zeta_3^2)a_4\right)$, 
	\item[] $v_5 := \left(3a_1, \ a_2+a_3+a_4, \ a_2+a_3+a_4, \ a_2+a_3+a_4\right)$, 
	\item[] $v_6 := \left(0, \ (1-\zeta_3)a_2, \ (1-\zeta_3)a_3, \ (1-\zeta_3)a_4\right)$
	\end{itemize}
	and $v_1, v_2, v_3$ are as above.\\
		Conversely, given $a_i$, $b_j$ and $c_1$ such that $v_1, \ldots , v_6$  are zero in $T$, we obtain a 
	faithful holomorphic action of $\Heis(3)$ on $T$ in standard form. 
\end{enumerate}
\end{lemma}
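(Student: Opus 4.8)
The plan is to translate the abstract conditions "$\Phi$ is a well-defined holomorphic action in standard form" into explicit relations among the translation parameters by directly imposing the group relations of $G$ on the affine maps $u(z) = \rho(u)z + \tau(u)$, working modulo the lattice $\Lambda$ of $T = E^4/K$. Since $\rho$ is fixed (Remark \ref{analyticreps}), a tuple $\tau \colon G \to T$ of the given shape extends to a genuine action of $G$ on $T$ if and only if the assignment $u \mapsto (z \mapsto \rho(u)z + \tau(u))$ respects all defining relations of $G$; because $\rho$ is already a homomorphism, each relation $w = 1$ in $G$ reduces to the single condition that the accumulated translation $\tau(w)$ — computed via the cocycle rule $\tau(u_1u_2) = \tau(u_1) + \rho(u_1)\tau(u_2)$ (Remark \ref{cocycle-rem}(2)) — vanishes in $T$. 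So the proof is a bookkeeping computation: for each relator, expand $\tau(w)$ using the explicit diagonal (or permutation-times-diagonal) matrices $\rho(h), \rho(k)$ and the standard-form ansatz for $\tau(h), \tau(k)$ (and $\tau(g)$ in the $\Heis(3)$ case), and read off the resulting vector in $T$.

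For $G = \ZZ_3^2 = \langle h, k \mid h^3 = k^3 = [h,k] = 1\rangle$, I would first compute $\tau(h^3)$: since $\rho(h) = \diag(\zeta_3, 1, \zeta_3^2, \zeta_3)$, one gets $\tau(h^3) = (I + \rho(h) + \rho(h)^2)\tau(h)$, and $(I + \rho(h) + \rho(h)^2) = \diag(0, 3, 0, 0)$ kills all coordinates except the second; this yields $v_2 = (0, 3b_2, 0, 0)$. Symmetrically $\tau(k^3)$ gives $v_3 = (3c_1, 0, 0, 0)$. For the commutator relation, compute $\tau(hkh^{-1}k^{-1})$; after simplification using that $\rho(h), \rho(k)$ commute and are diagonal, the nontrivial contribution is $(\rho(h) - I)\tau(k) - (\rho(k) - I)\tau(h)$ up to sign, which with the explicit eigenvalues collapses to $v_1 = ((\zeta_3 - 1)c_1, (1 - \zeta_3)b_2, (1 - \zeta_3)b_3, (1 - \zeta_3)b_4)$ — here I must be careful that in coordinate $1$, $\rho(h)$ acts by $\zeta_3$ but $\rho(k)$ by $1$, and vice versa in coordinates $2,3,4$, which is exactly what produces the mixed shape of $v_1$. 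Faithfulness is automatic since $\rho$ is faithful; freeness was already arranged in the setup, and "standard form" is by construction. The converse is the same computation read backwards: vanishing of $v_1, v_2, v_3$ is precisely the statement that the relators act trivially, so the assignment is a well-defined homomorphism $G \to \Aut(T)$.

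For $G = \Heis(3) = \langle g, h, k \mid g^3 = h^3 = k^3 = [g,k] = [h,k] = 1, [g,h] = k\rangle$ I would reuse the $\ZZ_3^2$ computation for the subgroup $\langle h, k\rangle$ (this gives back $v_1, v_2, v_3$, noting that under the chosen embedding $\ZZ_3^2 \hookrightarrow \Heis(3)$ from the Notation, $\rho(h), \rho(k)$ and $\tau(h), \tau(k)$ are unchanged) and then handle the three new relators involving $g$. The element $g$ acts by the permutation matrix cycling the last three coordinates times nothing (from the $\phi_{i,j}(g)$ displayed in Theorem \ref{Mani}, $\rho(g)$ is a permutation of coordinates $2,3,4$). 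Then: $g^3 = 1$ gives $\tau(g^3) = (I + \rho(g) + \rho(g)^2)\tau(g)$; since $\rho(g)$ fixes coordinate $1$ and cyclically permutes $(a_2, a_3, a_4)$, this is $(3a_1, a_2+a_3+a_4, a_2+a_3+a_4, a_2+a_3+a_4) = v_5$. The relation $[g,k] = 1$ produces $v_6$ after expanding $\tau(gkg^{-1}k^{-1})$ and using $\rho(g)\tau(k) = (c_1, 0, 0, 0) = \tau(k)$ together with the eigenvalues of $\rho(k)$; and the key relation $[g,h] = k$, i.e. $\tau(ghg^{-1}h^{-1}) = \tau(k)$, expanded with the cocycle rule, yields $v_4$ — this is the most delicate one because it mixes the permutation $\rho(g)$, the diagonal $\rho(h)$, and all three translation tuples, so the coordinates of $v_4$ interleave $a_i, b_j$ and $c_1$ in the asymmetric way displayed. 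I expect this commutator $[g,h] = k$ computation to be the main obstacle: one has to carefully track how the permutation $\rho(g)$ reindexes the $b_j$ and how the diagonal $\rho(h)$ scales each slot, and confirm the $c_1$-term in the first coordinate comes from the right-hand side $\tau(k)$. Everything else — $g^3, h^3, k^3$, the two central commutators — is routine, and the converse direction is again just the observation that vanishing of $v_1, \ldots, v_6$ is equivalent to all six defining relators acting as the identity on $T$, hence to $\Phi$ being a well-defined faithful holomorphic action in standard form.
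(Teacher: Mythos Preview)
Your approach is correct and essentially identical to the paper's: both impose the defining relations of $G$ on the affine maps $u(z)=\rho(u)z+\tau(u)$ and read off the resulting translation vectors $v_i$, with the converse following because vanishing of all $v_i$ is exactly the statement that the relators act trivially. The paper's proof is only a sketch (it works out the single relation $gk=kg$ yielding $v_6$ and declares the rest analogous), whereas you give a more systematic cocycle-rule bookkeeping of each relator; but the underlying computation is the same.
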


\begin{proof}
	We will only sketch the proof of (2). Since $\Phi$ is a homomorphism, the images $\Phi(g)$, $\Phi(h)$ and $\Phi(k)$ of the generators fulfill the six defining relations of 
	$\Heis(3)$.  As an example, the relation $\Phi(g) \circ \Phi(k) = \Phi(k) \circ \Phi(g)$ precisely means that the difference $v_6$ of 
	\begin{align*}
	&gk(z) = (z_1 + a_1 + c_1, \ \zeta_3 z_4 + a_2, \ \zeta_3 z_2 + a_3, \ \zeta_3 z_3 + a_4), ~  \text{ and}\\
	&kg(z) = (z_1 + a_1 + c_1, \ \zeta_3 z_4 + \zeta_3 a_2, \ \zeta_3 z_2 + \zeta_3 a_3, \ \zeta_3 z_3 + \zeta_3a_4)
	\end{align*}
	is zero, where $z = (z_1,\ldots,z_4)$.  The remaining five relations yield the vanishing of the other $v_i's$.  \\
	Conversely, if the elements $v_1, \ldots , v_6$ are zero, then it is clear that the formulae in Remark \ref{heis-action}
	define a faithful homomorphism $\Phi$. 
\end{proof}

\begin{cor} \label{cor-c1-b2-ord3}
	Let $\Phi \colon G \hookrightarrow \Aut(T)$ be a rigid and free action in standard form, then:
	\begin{itemize}
		\item[(a)] The elements $c_1$ and $b_2, b_3, b_4$ have order $3$ in $E_1$ and $E_2$, $E_3$, $E_4$, respectively.
		\item[(b)] Let $p_i \colon K \to E_i$ be the projection on the $i$-th coordinate. Then $p_i(K)$ does not contain 
		\[
		E_i[3]:=\lbrace z_i \in E_i ~ \vert ~ 3z_i=0 \rbrace. 
		\]
	\end{itemize}
\end{cor}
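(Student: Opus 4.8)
The plan is to use a simple freeness criterion for individual elements of $G$. In both cases $G=\ZZ_3^2$ and $G=\Heis(3)$ — in the latter via the subgroup $\langle h,k\rangle\cong\ZZ_3^2$, which carries the same analytic representation — the elements $k,h,hk,hk^2$ have diagonal analytic representations $\rho(k)=\diag(1,\zeta_3,\zeta_3,\zeta_3)$, $\rho(h)=\diag(\zeta_3,1,\zeta_3^2,\zeta_3)$, $\rho(hk)=\diag(\zeta_3,\zeta_3,1,\zeta_3^2)$ and $\rho(hk^2)=\diag(\zeta_3,\zeta_3^2,\zeta_3,1)$, so that each of them has the eigenvalue $1$ in exactly one coordinate: coordinate $1$ for $k$, coordinate $2$ for $h$, coordinate $3$ for $hk$, coordinate $4$ for $hk^2$ — these being the coordinates of the subtori $E_1,\dots,E_4$ of Proposition \ref{isog}. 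First I will establish the following: if $g\in G$ satisfies $\rho(g)_i=1$ and $\rho(g)_j\in\{\zeta_3,\zeta_3^2\}$ for all $j\neq i$, then $g$ acts freely on $T=E^4/K$ if and only if $\tau(g)_i\notin p_i(K)$. Indeed, lifting the fixed point equation $(\rho(g)-\id)z+\tau(g)\in K$ to $E^4$, its $i$-th coordinate equals $\tau(g)_i$ and does not involve $z$, while in each coordinate $j\neq i$ the operator $\rho(g)_j-1$ is multiplication by a nonzero element of $\ZZ[\zeta_3]$, hence a surjective endomorphism of $E$; the coordinates $j\neq i$ can thus be matched to arbitrary values, and a fixed point exists exactly when $K$ contains an element whose $i$-th coordinate is $\tau(g)_i$.

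Next I compute, using the cocycle relation $\tau(u_1u_2)=\tau(u_1)+\rho(u_1)\tau(u_2)$ and the standard form of Remark \ref{heis-action}, that $\tau(k)=(c_1,0,0,0)$, $\tau(h)=(0,b_2,b_3,b_4)$, $\tau(hk)=(\zeta_3c_1,b_2,b_3,b_4)$ and $\tau(hk^2)=(2\zeta_3c_1,b_2,b_3,b_4)$. Since $\Phi$ is free, the criterion applied to $k,h,hk,hk^2$ gives $c_1\notin p_1(K)$ and $b_j\notin p_j(K)$ for $j=2,3,4$; in particular, as $0\in p_i(K)$, none of $c_1,b_2,b_3,b_4$ vanishes in the corresponding copy of $E$. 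To finish (a) it remains to check that these elements are $3$-torsion. Each of $k,h,hk,hk^2$ has order $3$ in $G$, so $\Phi(g)^3=\id_T$ for $g\in\{k,h,hk,hk^2\}$; expanding $\Phi(g)^3(z)=z+\bigl(\id+\rho(g)+\rho(g)^2\bigr)\tau(g)$ and using that $\id+\rho(g)+\rho(g)^2$ is the diagonal matrix with entry $3$ in the coordinate where $\rho(g)=1$ and entry $0$ elsewhere (because $1+\zeta_3+\zeta_3^2=0$), these relations say precisely that $(3c_1,0,0,0)$, $(0,3b_2,0,0)$, $(0,0,3b_3,0)$ and $(0,0,0,3b_4)$ belong to $K$. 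By Proposition \ref{isog}, $K$ is the kernel of the addition map $E_1\times\dots\times E_4\to T$, whose restriction to any single factor $E_i$ is the inclusion of $E_i$ as a subtorus of $T$, hence injective; thus $K$ meets each factor $E_i$ trivially, and the four elements above are zero in $E_1\times\dots\times E_4$, i.e. $3c_1=3b_2=3b_3=3b_4=0$. Combined with the non-vanishing just obtained, and since $3$ is prime, each of $c_1,b_2,b_3,b_4$ has order exactly $3$.

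Part (b) is then immediate: by (a), $c_1$ is a nonzero element of $E_1[3]$ with $c_1\notin p_1(K)$, so $p_1(K)$ does not contain $E_1[3]$; the elements $b_2,b_3,b_4$ dispose of $i=2,3,4$ in the same way. I do not expect a genuine obstacle. The two points requiring care are that each of $k,h,hk,hk^2$ has the eigenvalue $1$ in a \emph{single} coordinate, so that the step ``solve the remaining coordinates'' involves no joint constraint on $K$, and that the isogeny of Proposition \ref{isog} is the \emph{addition} map — this is exactly what upgrades ``$3c_1\in K$'' to ``$3c_1=0$ in $E_1$'', without which only the weaker statement about the subtori $E_i\subset T$ would follow.
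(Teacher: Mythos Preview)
Your proof is correct and follows essentially the same approach as the paper: both use the injectivity of $E_i\hookrightarrow T$ (equivalently, $K\cap E_i=\{0\}$) together with the order-$3$ relations for $k,h,hk,hk^2$ to force $3c_1=3b_j=0$, and both deduce freeness/non-freeness of these four elements from whether $c_1$ or $b_j$ lies in $p_i(K)$. The only cosmetic difference is that you isolate the freeness criterion as a general principle and apply it uniformly, whereas the paper invokes the vanishing of $v_2,v_3$ from Lemma~\ref{well-defined} for $c_1,b_2$ and then computes $hk,hk^2$ directly for $b_3,b_4$.
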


\begin{proof}
	(a) We use the important property that the maps 
	\begin{align*}
	E_i \hookrightarrow  E_1 \times \ldots \times E_4 \to T =  (E_1 \times \ldots \times E_4)/K
	\end{align*}
	are injective: it implies that if three of the four coordinates of an element of $K$ are zero, then the fourth coordinate is zero as well. It then follows from $v_2, v_3 \in K$ that $3c_1 = 0$, $3b_2 = 0$. Moreover, $c_1$ and $b_2$ are clearly non-trivial, since else $h$ resp. $k$ do not act freely. In order to show that $\ord(b_3) = \ord(b_4) = 3$, a similar argument works: we observe that 
	\begin{align*}
	&hk(z) = (\zeta_3 z_1 + \zeta_3c_1, \ \zeta_3 z_2 + b_2, \ z_3 + b_3, \ \zeta_3^2 z_4 + b_4), \\
	&hk^2(z) = (\zeta_3 z_1 + 2\zeta_3 c_1, \ \zeta_3^2 z_2 + b_2, \ \zeta_3z_3 + b_3, \ z_4 + b_4)
	\end{align*}
	have order $3$, and so, the elements $(0, \ 0, \ 3b_3, \ 0), (0, \ 0, \ 0, \ 3b_4)$ are zero in $T$, which in turn implies that $3b_3 = 3b_4 = 0$ in $E$. Moreover, $b_3$ and $b_4$ cannot be zero, because the elements  $hk$ and $hk^2$ act freely by assumption.  \\
	(b) If $p_i(K)$ contains $E_i[3]$, we can find an element in $K$ whose first coordinate is $c_1$ (if $i = 1$) or $b_i$ (if $i \in \{2,3,4\}$). Thus, we obtain a fixed point of $k$ (if $i = 1$), $h$ (if $i = 2$), $hk$ (if $i = 3$) or $hk^2$ (if $i = 4$). 
\end{proof}

\begin{assump} \label{free-assump}
	Since we are only interested in free actions, we can and will  assume from now on that $\ord(c_1) = 3$ and $\ord(b_j) = 3$   for all $j$. 
\end{assump}

We will denote the fixed locus of $\zeta_3 \in \Aut(E)$ by 
\begin{align*}
\Fix_{E}(\zeta_3) := \{x \in E \ |\ \zeta_3 x = x\} = \lbrace 0, t,-t \rbrace, \quad \makebox{where} \quad t := \frac{1+2\zeta_3}{3}.
\end{align*} 


Besides knowing  the fixed locus of $\zeta_3$, we also need a description of  the kernel of $3(\zeta_3-1)$. 

\begin{lemma} \label{9-torsion-span}
	Let $E = \CC/\ZZ[\zeta_3]$ be the Fermat elliptic curve.
	\begin{itemize}
		\item[(a)] The kernel of the multiplication map $3(\zeta_3-1) \colon E \to E$ is isomorphic to $\ZZ_3 \times \ZZ_9$. More precisely, it is spanned by $1/3$ and $t/3 = (1+2\zeta_3)/9$. 
		\item[(b)] Let $x \in E$ be an element of order $9$. If $3x$ is fixed by $\zeta_3$, then $x$ and $\zeta_3 x$ span $\ker(3(\zeta_3-1))$.
	\end{itemize}
\end{lemma}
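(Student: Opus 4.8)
The plan is to work out the two parts by explicit computation with the $\ZZ[\zeta_3]$-module structure of $E$ and the specific element $t=(1+2\zeta_3)/3$.

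\textbf{Part (a).} First I would identify $E=\CC/\ZZ[\zeta_3]$ and compute the multiplier $3(\zeta_3-1)$. Since $\ZZ[\zeta_3]$ is a PID (it is the ring of Eisenstein integers), the kernel of multiplication by an element $\mu\in\ZZ[\zeta_3]$ on $\CC/\ZZ[\zeta_3]$ is $\tfrac1\mu\ZZ[\zeta_3]/\ZZ[\zeta_3]\cong\ZZ[\zeta_3]/(\mu)$, which has order $|N(\mu)|$. Here $\mu=3(\zeta_3-1)$; using $N(\zeta_3-1)=3$ and $N(3)=9$ gives $|N(\mu)|=27$. To see the group is $\ZZ_3\times\ZZ_9$ and not $\ZZ_{27}$ or $\ZZ_3^3$, I would note $(\zeta_3-1)^2=-3\zeta_3$ (a standard Eisenstein identity), so $3(\zeta_3-1)=-\zeta_3^{-1}(\zeta_3-1)^3$, i.e.\ $\mu$ is a unit times $(\zeta_3-1)^3$, while $3=-\zeta_3^{-1}(\zeta_3-1)^2$. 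Thus $\ker(3(\zeta_3-1))=\tfrac{1}{(\zeta_3-1)^3}\ZZ[\zeta_3]/\ZZ[\zeta_3]$. One checks $1/3=-\zeta_3/(\zeta_3-1)^2$ has order $9$ (its annihilator ideal is $((\zeta_3-1)^2)$) and lies in this kernel, and that $t/3=(1+2\zeta_3)/9$ has order dividing $9$; a short computation of its annihilator shows it supplements $\langle 1/3\rangle$, giving the claimed $\ZZ_3\times\ZZ_9$ together with the stated generators. Alternatively, and perhaps more cleanly, I would just verify directly that $3(\zeta_3-1)\cdot\tfrac13=(\zeta_3-1)\in\ZZ[\zeta_3]$ and $3(\zeta_3-1)\cdot\tfrac{t}{3}=(\zeta_3-1)(1+2\zeta_3)/3$; since $(\zeta_3-1)(1+2\zeta_3)=-3$ (using $1+\zeta_3+\zeta_3^2=0$), this equals $-1\in\ZZ[\zeta_3]$, confirming both generators lie in the kernel, and then count orders to conclude.

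\textbf{Part (b).} Suppose $x\in E$ has order $9$ and $3x\in\Fix_E(\zeta_3)=\{0,t,-t\}$. Since $x$ has order $9$, $3x\neq0$, so $3x=\pm t$; replacing $x$ by $-x$ if needed, assume $3x=t$. Now $3(\zeta_3-1)x=(\zeta_3-1)\cdot 3x=(\zeta_3-1)t$, and $(\zeta_3-1)t=(\zeta_3-1)(1+2\zeta_3)/3=-3/3=-1$ lifts to an element of $\ZZ[\zeta_3]$, hence $3(\zeta_3-1)x=0$ in $E$; so $x\in\ker(3(\zeta_3-1))$, which by (a) is $\ZZ_3\times\ZZ_9$. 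An order-$9$ element of $\ZZ_3\times\ZZ_9$ together with any element of $\ker(3(\zeta_3-1))$ not in $\langle x\rangle$ generates the whole group, so it suffices to show $\zeta_3 x\notin\langle x\rangle$. If $\zeta_3 x=mx$ for some integer $m$, then $(\zeta_3-1)x$ would be $9$-torsion with $(\zeta_3-1)x=(m-1)x$; but applying $3$ gives $(\zeta_3-1)\cdot 3x=(\zeta_3-1)t=-1=0$ in $E$, consistent, so I instead argue on orders: $\zeta_3 x$ also has order $9$ (as $\zeta_3$ is an automorphism), and $3\zeta_3 x=\zeta_3 t=t=3x$, so $3(\zeta_3 x-x)=0$, i.e.\ $\zeta_3 x-x$ is $3$-torsion. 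If $\zeta_3 x\in\langle x\rangle$ then $\zeta_3 x=x+y$ with $y\in\langle x\rangle$ of order dividing $3$, so $y=3kx$ and $\zeta_3 x=(1+3k)x$; iterating, $\zeta_3^j x=(1+3k)^j x$, and since $\zeta_3^3=1$ we get $(1+3k)^3\equiv1\pmod 9$, which holds for all $k$, so this does not immediately contradict. The clean finish is to observe that $(\zeta_3-1)x$ generates $\langle 1/3\rangle=E[(\zeta_3-1)^2]$: indeed $(\zeta_3-1)x$ is killed by $(\zeta_3-1)^2$ since $(\zeta_3-1)^3 x=3(\zeta_3-1)x\cdot(\text{unit})=0$... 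I would compute $(\zeta_3-1)x$ directly. Since $3x=t$, write $x=t/3+\epsilon$ where $\epsilon\in E[3]$; then $(\zeta_3-1)x=(\zeta_3-1)t/3+(\zeta_3-1)\epsilon=-1/3+(\zeta_3-1)\epsilon$ (using $(\zeta_3-1)t=-1$ so $(\zeta_3-1)t/3=-1/3$). As $(\zeta_3-1)E[3]=0$ (because $(\zeta_3-1)^2=-3\zeta_3$ kills... no: $(\zeta_3-1)$ alone need not kill $E[3]$). Here I expect the \textbf{main obstacle}: pinning down exactly how $(\zeta_3-1)$ acts on the order-$3$ ambiguity $\epsilon$, and showing that regardless of $\epsilon$ the pair $\{x,\zeta_3 x\}$ generates. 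I would handle it by a finite case check: $E[3]=\Fix_E(\zeta_3)$ actually equals $\{0,t,-t\}$ (this is exactly the statement preceding the lemma, since $|E[3]|=N(\zeta_3-1)\cdot$... wait $|E[3]|=9$), so $\epsilon$ ranges over a $9$-element group; enumerating the (at most) a few relevant cosets and computing $\zeta_3 x$ in coordinates with respect to the basis $\{1/3,\,t/3\}$ of $\ker(3(\zeta_3-1))\cong\ZZ_3\times\ZZ_9$ reduces everything to a $2\times2$ integer matrix mod the relevant moduli, from which one reads off that $x$ and $\zeta_3 x$ span. I would present this last bit as a short explicit computation in the $\ZZ_3\times\ZZ_9$ coordinates rather than abstractly.

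In summary: part (a) is a direct ideal-theoretic computation in the Eisenstein integers; part (b) follows by reducing to $\ker(3(\zeta_3-1))\cong\ZZ_3\times\ZZ_9$ via (a) and a finite coordinate check, and the only delicate point is controlling the $E[3]$-indeterminacy of $x$, which I would dispatch by explicit enumeration in the chosen basis.
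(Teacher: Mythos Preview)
Your treatment of part (a) is fine and matches the paper, which simply records it as ``just a calculation''.

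For part (b), your plan would eventually succeed, but you make it harder than necessary and abandon the right idea too soon. You correctly reduce to showing $\zeta_3 x \notin \langle x\rangle$, and you correctly observe that if $\zeta_3 x \in \langle x\rangle$ then $\zeta_3$ acts on $\langle x\rangle\cong\ZZ_9$ as an automorphism of order dividing $3$; since $\zeta_3 x=x$ would force $x\in\Fix_E(\zeta_3)$ (contradicting $\ord(x)=9$), this leaves $\zeta_3 x\in\{4x,7x\}$, i.e.\ $(\zeta_3-1)x=\pm 3x$. At this point you give up and retreat to a finite enumeration in $E[3]$-coordinates, where you also momentarily conflate $E[3]$ (nine elements) with $\Fix_E(\zeta_3)$ (three elements). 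The paper instead multiplies $(\zeta_3-1)x=\pm 3x$ once more by $(\zeta_3-1)$: the left-hand side becomes $(\zeta_3-1)^2x=-3\zeta_3 x$ by the Eisenstein identity you already used in (a), while the right-hand side is $\pm(\zeta_3-1)(3x)=0$ because $3x$ is fixed by $\zeta_3$ by hypothesis. Hence $3x=0$, contradicting $\ord(x)=9$. This two-line contradiction replaces your entire case analysis, and it uses exactly the two ingredients you already had in hand.
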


\begin{proof}
	(a) This is just a calculation. \\
	(b) By (a), it suffices to exclude that $\langle x, \zeta_3 x \rangle \simeq \ZZ_9$. Assume for a contradiction that $\zeta_3 x \in \langle x \rangle$. Since $x$ has order $9$, $\zeta_3$ is an automorphism of $\langle x \rangle$ of order $3$ so that we are left with the possibilities $\zeta_3 x \in \{4x,7x\}$. In these cases, $(\zeta_3-1)x = \pm 3x$. Multiplying both sides 
	with  $(\zeta_3-1)$ yields 
	\begin{align*}
	-3\zeta_3x=(\zeta_3-1)^2 x =  \pm(\zeta_3-1)3x = 0,  
	\end{align*}
	which  implies $3x=0$,  in contradiction to 
	the assumption $\ord(x)=9$. 
\end{proof}

\begin{prop} \label{prop-structure-K}
	Let $\Phi \colon G \hookrightarrow \Aut(T)$ be a rigid and free action in standard form, where $T = E^4/K$. Then, $K \leq \Fix_E(\zeta_3)^4 \simeq \mathbb Z_3^4$. In particular, every non-trivial element of $K$ has order $3$. 
\end{prop}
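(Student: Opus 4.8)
The plan is to control the four coordinate projections $p_i(K)\subseteq E$ ($i=1,\dots,4$) one at a time and then use $K\subseteq p_1(K)\times\cdots\times p_4(K)$. Write $\varpi:=\zeta_3-1$ for the ramified prime of $\ZZ[\zeta_3]$ above $3$; then $\varpi^2=-3\zeta_3$ (so $(\varpi^2)=(3)$ and $(\varpi^3)=(3\varpi)$), while $E[\varpi]=\Fix_E(\zeta_3)$ and $E[\varpi^2]=E[3]$. Two facts will be used throughout. First, each $\rho(g)$ ($g\in G$) descends to an automorphism of $T=E^4/K$, so the finite subgroup $K\subseteq E^4$ is invariant under every $\rho(g)$, hence under all diagonal operators $\rho(g)-\id$ and arbitrary products of them. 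Second, each composition $E_i\hookrightarrow E^4\to T$ is injective — the property already exploited in the proof of Corollary \ref{cor-c1-b2-ord3} — so an element of $K$ all of whose coordinates except the $i$-th vanish must be $0$. Since $\rho$ acts on the $i$-th coordinate through a non-trivial homomorphism $\chi_i\colon G\to\langle\zeta_3\rangle$, invariance of $K$ already shows each $p_i(K)$ is $\zeta_3$-stable, hence a $\ZZ[\zeta_3]$-submodule of $E$.

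The core step is to prove $\varpi^3\cdot p_i(K)=0$ for each $i$. Here I would use that $G\cong\ZZ_3^2$ (for $G=\Heis(3)$ one works inside $\langle h,k\rangle\cong\ZZ_3^2$, which is harmless as $K$ is $\rho(g)$-invariant for all $g$) has exactly four subgroups of order $3$, and that — by the explicit form of $\rho$ in Remark \ref{analyticreps} — the kernels $H_1,\dots,H_4$ of the coordinate characters $\chi_1,\dots,\chi_4$ are pairwise distinct, hence exhaust them. Fix $i$; for each $l\neq i$ choose a generator $g_l$ of $H_l$ and set $P_i:=\prod_{l\neq i}\big(\rho(g_l)-\id\big)$, a diagonal operator preserving $K$. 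On the $l$-th coordinate ($l\neq i$) the factor belonging to $g_l$ acts as $\chi_l(g_l)-1=0$, so $P_i$ kills that coordinate; on the $i$-th coordinate each of the three factors acts as $\chi_i(g_l)-1$ with $\chi_i(g_l)\neq 1$ (because $g_l\in H_l\neq H_i$), i.e.\ as a unit times $\varpi$, so $P_i$ acts there as a unit times $\varpi^3$. Hence for $x=(x_1,\dots,x_4)\in K$ the element $P_i(x)\in K$ has all coordinates $0$ except the $i$-th, which equals $\epsilon\,\varpi^3 x_i$ with $\epsilon$ a unit of $\ZZ[\zeta_3]$; injectivity of $E_i\hookrightarrow T$ forces $\varpi^3 x_i=0$, that is $3\varpi\,x_i=0$.

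It then remains to bring in freeness. By the previous step $p_i(K)$ is a $\ZZ[\zeta_3]$-submodule of $E[\varpi^3]\cong\ZZ[\zeta_3]/(\varpi^3)$; as $\ZZ[\zeta_3]$ is a PID and $\varpi$ prime, the submodules of this cyclic module form the chain $0\subsetneq E[\varpi]\subsetneq E[\varpi^2]\subsetneq E[\varpi^3]$. By Corollary \ref{cor-c1-b2-ord3}(b), $p_i(K)$ does not contain $E_i[3]=E[\varpi^2]$, so $p_i(K)\in\{0,\ E[\varpi]\}$, hence $p_i(K)\subseteq\Fix_E(\zeta_3)$. Therefore $K\subseteq\Fix_E(\zeta_3)^4\cong\ZZ_3^4$, and in particular every non-trivial element of $K$ has order $3$.

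I expect the only genuine obstacle to be bookkeeping rather than computation: one must check that the four coordinate characters $\chi_j$ of $\rho$ are pairwise non-proportional, so that the $H_j$ really are all four order-$3$ subgroups of $\ZZ_3^2$ — this is exactly what lets $P_i$ single out the $i$-th coordinate, and it is the one point at which the normal form of $\rho$ from Remark \ref{analyticreps} enters. Everything else (the identity $\varpi^2=-3\zeta_3$, the submodule chain of $\ZZ[\zeta_3]/(\varpi^3)$, and the inclusion $K\subseteq\prod_i p_i(K)$) is routine.
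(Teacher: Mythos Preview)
Your proof is correct and follows essentially the same route as the paper's. Both arguments hinge on the diagonal operator $P_i=\prod_{l\neq i}(\rho(g_l)-\id)$ (with $g_l$ a generator of $\ker\chi_l$) to isolate the $i$-th coordinate and conclude $\varpi^3\cdot p_i(K)=0$; the paper does this explicitly for $i=1$ using $h,\,hk,\,hk^2$ and then says ``analogous arguments'' for the other coordinates.

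Where you diverge slightly is in the endgame. The paper deduces $t_i\in\Fix_E(\zeta_3)$ in two steps via the ad~hoc Lemma~\ref{9-torsion-span}(b): first ruling out $\ord(t_i)=9$ (because then $\langle t_i,\zeta_3 t_i\rangle\simeq\ZZ_3\times\ZZ_9\supseteq E_i[3]$), then ruling out $t_i\in E[3]\setminus\Fix_E(\zeta_3)$ (because then $\langle t_i,\zeta_3 t_i\rangle=E_i[3]$). You instead observe once and for all that $p_i(K)$ is a $\ZZ[\zeta_3]$-submodule of $E[\varpi^3]\cong\ZZ[\zeta_3]/(\varpi^3)$, whose submodules form a chain, and then invoke Corollary~\ref{cor-c1-b2-ord3}(b) to land in $E[\varpi]$. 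This is a cleaner packaging that bypasses Lemma~\ref{9-torsion-span} entirely; the paper's version is more hands-on but ultimately encodes the same module-theoretic fact.
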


\begin{proof}
	We preliminarily remark that for all $u\in G$ we may view $\rho(u)$ as an automorphism of $E^4$ that maps $K$ to $K$.
	Let $(t_1,t_2,t_3,t_4) \in K$. By the initial remark, the element
	\begin{align*}
	(\rho(h)-\id_{E^4})(\rho(hk)-\id_{E^4})(\rho(hk^2)-\id_{E^4})(t_1,t_2,t_3,t_4) = ((\zeta_3-1)^3 t_1, \ 0, \ 0, \ 0).
	\end{align*}
	is contained in $K$ as well. Since $(\zeta_3-1)^2 = -3\zeta_3$ and $E_1 \hookrightarrow T$ is injective, we infer that $3t_1 \in \Fix_{E}(\zeta_3)$. Analogous arguments yield that $3t_i \in \Fix_{E}(\zeta_3) \simeq \mathbb Z_3$.
	It follows in particular that $t_i \in E_i[9]$ for all $i$.
	Suppose that there is $i$ such that $\ord(t_i) = 9$. Then, we arrive at a contradiction as follows: by Lemma \ref{9-torsion-span} (b), the two elements $t_i$, $\zeta_3 t_i$ span a subgroup of $E_i[9]$  isomorphic to $\ZZ_3 \times \ZZ_9$. Thus, the image of the projection 
	$p_i\colon K \to E_i$ contains $E_i[3]$, which is impossible 
	by Corollary \ref{cor-c1-b2-ord3} (b). 
	This shows that $3t_1 = 3t_2 = 3t_3 = 3t_4 = 0$. If some $t_i$ is  not fixed by $\zeta_3$, then $t_i$ and $\zeta_3t_i$ span 
	$E_i[3]$, and we again arrive at  the contradiction  $E_i[3]\leq p_i(K)$.
\end{proof}

\begin{rem} \label{remark-K}
Let $X=T/G$ be a rigid hyperelliptic fourfold, where $T=E^4/K$. By construction, $K$ is the 
kernel of the addition map $E^4 \to T$. 
By Proposition \ref{prop-structure-K}, $K \leq \Fix_E(\zeta_3) \simeq \mathbb Z_3^4$. Moreover, $K$ does not contain any
non-zero multiples of unit vectors. 
There are $129$ subgroups  of $\mathbb Z_3^4$ with these two properties, the set of which we denote by 
$\mathcal K$.
\end{rem}

\subsection{Kernels and Actions for $\Heis(3)$}
In this subsection we finish part (1) and (2) of the classification scheme for $G=\Heis(3)$. 
Here, only those elements in $\mathcal K$ stabilized by  $\rho(g)$ can occur as a kernel. 
Out of the $129$ possibilities found in 
Remark \ref{remark-K}, just the following nine survive:

\medskip
\begin{center}
	\bgroup\def\arraystretch{1.3}\begin{tabular}{|c|l|} \hline 
	$\dim_{\ZZ_3}(K)$ & $K$ \\ \hline \hline
	$0$ & $\{0\}$ \\  \hline
	$1$ & $\langle(0,t,t,t)\rangle$, $\langle(t,t,t,t)\rangle$, $\langle (-t,t,t,t)\rangle$ \\ \hline
	$2$ & $\langle (0,t,t,t), (0,t,-t,0)\rangle$, \\
		& $\langle (0,t,t,t), (t,-t,0,t)\rangle$, \\ 
		& $\langle(0,t,t,t),(t,t,0,-t)\rangle$ \\ \hline
	$3$ & $\langle (0,t,t,t),(0,t,-t,0), (t,0,t,0)\rangle$, \\
		& $\langle (0,t,t,t), (t,-t,-t,t), (t,t,0,t)\rangle$ \\ \hline
	\end{tabular}\egroup
\end{center}

\medskip
As a first step, we exclude the last four possibilities for $K$ in  the table.

\begin{prop} \label{kernel-prop}
Let $T/\Heis(3)$ be a rigid hyperelliptic fourfold, where $T = E^4/K$. Then, $K$ is either generated by an element $(t',t'',t'',t'')$ where $t',t'' \in \Fix_E(\zeta_3)$, or 
$K = \langle (0,t,t,t), (0,t,-t,0)\rangle$.
\end{prop}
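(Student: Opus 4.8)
The plan is to eliminate, one by one, the four candidate kernels $K$ of $\ZZ_3$-dimension $\geq 2$ appearing in the table just before the statement, namely the three two-dimensional subgroups other than $\langle (0,t,t,t),(0,t,-t,0)\rangle$ and the two three-dimensional ones. The tool to do this is Lemma \ref{well-defined} (2): a rigid free action of $\Heis(3)$ on $T = E^4/K$ in standard form exists only if there are torsion points $a_i, b_j, c_1$ of order dividing $9$ (with $\ord(c_1)=\ord(b_j)=3$ by Assumption \ref{free-assump}) making $v_1,\dots,v_6$ zero in $T$, i.e.\ making each $v_\ell$ lie in $K$ when viewed in $E^4$. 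So for each forbidden $K$ I would write down the six membership conditions $v_\ell \in K$ and derive a contradiction, either with the freeness constraints of Corollary \ref{cor-c1-b2-ord3} (in particular $p_i(K) \not\supseteq E_i[3]$) or with $\ord(c_1)=\ord(b_j)=3$.

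The first simplification is to exploit $v_6 = (0,(1-\zeta_3)a_2,(1-\zeta_3)a_3,(1-\zeta_3)a_4) \in K$ together with $v_1,v_2,v_3$. Since $K \leq \Fix_E(\zeta_3)^4$ by Proposition \ref{prop-structure-K} and every nonzero element of $K$ has order $3$, and since $(1-\zeta_3)$ maps $E[9]$ into $E[3]$ but is injective on $\Fix_E(\zeta_3)$-cosets in a controlled way, the conditions $v_1, v_6 \in K$ pin down the possible $(1-\zeta_3)$-multiples of the $a_i$ and $b_j$ fairly rigidly; likewise $v_2, v_3$ say $(0,3b_2,0,0), (3c_1,0,0,0) \in K$, which combined with the "no nonzero multiples of unit vectors" property of $K$ (Remark \ref{remark-K}) forces $3b_2 = 3c_1 = 0$ — already guaranteed — but more importantly lets me read off which coordinates of $v_4, v_5$ are constrained. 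For a given two- or three-dimensional $K$ from the list, membership $v_4, v_5 \in K$ then becomes a small linear system over $\ZZ_3$ (after clearing $(1-\zeta_3)$) in the unknowns $a_i, b_j, c_1$ modulo $\Fix_E(\zeta_3)$ and $E[3]$. I expect that in each of the four forbidden cases this system forces one of the projections $p_i(K)$ to contain $E_i[3]$, contradicting Corollary \ref{cor-c1-b2-ord3} (b); alternatively it may directly force some $b_j$ or $c_1$ into $\langle 1/3\rangle \cap \Fix_E(\zeta_3) = \{0\}$, contradicting Assumption \ref{free-assump}.

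Concretely, the symmetry of the $\Heis(3)$-action — note that $\rho(g)$ cyclically permutes the last three coordinates — means the three two-dimensional candidates $\langle (0,t,t,t),(t,-t,0,t)\rangle$ and $\langle (0,t,t,t),(t,t,0,-t)\rangle$ should fall to essentially the same computation (they differ by the $\rho(g)$-action), and similarly the two three-dimensional candidates are $\rho(g)$-stable refinements of these. So in practice I would treat one representative of each $\rho(g)$-orbit, exhibit the contradiction, and remark that the others follow by applying a power of $\rho(g)$ (which permutes the standard-form data accordingly). What survives is exactly the list in the statement: $K = \{0\}$, $K$ generated by a single $(t',t'',t'',t'')$ with $t',t'' \in \Fix_E(\zeta_3)$ (the three one-dimensional cases, which are $\rho(g)$-stable lines of the required shape), and $K = \langle (0,t,t,t),(0,t,-t,0)\rangle$.

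The main obstacle will be organizing the bookkeeping so that the argument is uniform rather than four ad hoc computations: the unknowns $a_i$ live in $E[9]$ (not just $E[3]$) because only their $(1-\zeta_3)$- and $3$-multiples are constrained, so I must carefully track torsion orders — $9$ versus $3$ — throughout, and the interplay between "$v_\ell \in K$ in $E^4$" and "$v_\ell = 0$ in $T = E^4/K$" requires repeated use of the injectivity of $E_i \hookrightarrow T$ (the "three zero coordinates force the fourth to vanish" principle from the proof of Corollary \ref{cor-c1-b2-ord3}). Getting a clean statement of which coordinates of $v_4,v_5$ one is allowed to set to zero, given the shape of $K$, is the delicate point; once that is set up, each individual contradiction is a short $\ZZ_3$-linear-algebra check.
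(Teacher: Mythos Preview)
Your approach is essentially correct and would succeed, but it is more laborious than the paper's, which avoids case-by-case elimination entirely. The paper observes that every non-cyclic candidate $K$ in the table contains $(0,t,t,t)$; combined with the special shape of $v_5 = (3a_1,\ s,\ s,\ s)$ (where $s = a_2+a_3+a_4$), membership $v_5 \in K$ forces the first coordinate $3a_1$ to vanish in $E$ uniformly across all five non-cyclic candidates (one checks in each that the only elements of $K$ with three equal trailing coordinates have first coordinate $0$). From $3a_1 = 0$ one gets $(1-\zeta_3)a_1 \in \Fix_E(\zeta_3)$, and then the first coordinate of $v_4 \in K \leq \Fix_E(\zeta_3)^4$ yields $c_1 \in \Fix_E(\zeta_3)$. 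Since $\ord(c_1) = 3$, this means $c_1 = \pm t$, so any $K$ containing an element with nonzero first coordinate would give $k$ a fixed point. This single chain of implications eliminates all four forbidden kernels at once, leaving only $\langle (0,t,t,t),(0,t,-t,0)\rangle$ among the non-cyclic candidates.

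One small correction to your plan: the two forbidden two-dimensional kernels $\langle (0,t,t,t),(t,-t,0,t)\rangle$ and $\langle (0,t,t,t),(t,t,0,-t)\rangle$ are each individually $\rho(g)$-stable (that is why they appear on the list at all), and they are \emph{not} mapped to one another by powers of $\rho(g)$; your proposed symmetry reduction would not collapse these two cases into one. This does not break your argument, but it means your case count would be slightly higher than you anticipate.
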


\begin{proof}
The cyclic case follows from Remark \ref{remark-K}. If $K$ is non-cyclic, investigating the five subgroups introduced in Remark \ref{remark-K} shows that $(0,t,t,t) \in K$ in each case. From this and since the last three coordinates of
\begin{align*}
 v_5 = \left(3a_1, \ a_2+a_3+a_4, \ a_2+a_3+a_4, \ a_2+a_3+a_4\right) \in K \qquad \makebox{($v_5$ as in Lemma \ref{well-defined})}
\end{align*}
coincide, we conclude that $3a_1 = 0$. This shows that $(1-\zeta_3)a_1$ is fixed by $\zeta_3$. Since $(1-\zeta_3)a_1 - c_1$ is the first coordinate of $v_4 \in K$, it is fixed by $\zeta_3$ as well. We obtain that $c_1 \in \Fix_E(\zeta_3)$, and hence, $K$ cannot contain an element whose first coordinate is non-zero: otherwise, $k$ does not act freely. Out of the five non-cyclic possibilities, only $K = \langle (0,t,t,t), (0,t,-t,0)\rangle$ remains. 
\end{proof}

We observe in particular that the above proposition shows that every element $(t_1,t_2,t_3,t_4) \in K$ has the property that $t_2+t_3+t_4 = 0$ in $E$. This observation is useful to prove a simple criterion for the freeness of the action of $\Heis(3)$ on $T = E^4/K$.

\begin{lemma} \label{lemma-freeness}
	An action $\Phi \colon \Heis(3) \hookrightarrow \Aut(T)$  is free if and only if none of the elements
	$k, g,  h, gh$ and $gh^2$ have a fixed point. If $\Phi$ is in standard form, the 
	freeness of these elements can be characterized as follows:
	\begin{center}
	\begin{tabular}{ccl}
		(1) & $k$: &
		$c_1$ is never the first coordinate of an element in $K$, \\
		(2) & $g$: & 
		$(\zeta_3-1)a_1 \neq 0$ in $E$,\\
		(3)&  $h$: &
		$b_2$ is never the second coordinate of an element in $K$,\\
		(4) & $gh$: &
		$\zeta_3^2(a_2+a_3) + a_4 + \zeta_3^2(b_2+b_4) + b_3 \neq 0$ in $E$, \\
		(5) & $gh^2$: &
		$\zeta_3(a_2+a_3)+a_4 - \zeta_3(b_2+b_3) - b_4 \neq 0$ in $E$. 
	\end{tabular}
\end{center}
\end{lemma}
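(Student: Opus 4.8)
The plan is to reduce the freeness of $\Phi$ on $T = E^4/K$ to a condition on each of the conjugacy-class representatives of elements of order $3$, and then to compute, element by element, when a lift of the fixed-point equation is solvable over $E^4$. First I would argue that $\Phi$ is free on $T$ precisely when every non-trivial element $u \in \Heis(3)$ acts without fixed points; since $\Heis(3)$ has order $27$, its non-trivial elements all have order $3$, and the fixed-point property depends only on the conjugacy class of $u$ (if $u$ fixes $z$ then $vuv^{-1}$ fixes $\Phi(v)(z)$). The conjugacy classes of non-central elements are represented by $k$ together with $g, h, gh, gh^2$ modulo the centre $\langle k \rangle$; I would check that, because central elements of $\Heis(3)$ act on $T$ via $\rho$ as multiplication by diagonal scalar powers of $\zeta_3$ plus a translation, the only central element one must worry about is $k$ itself (the elements $k^2 = k^{-1}$ is conjugate-free iff $k$ is, and the identity is excluded), whereas every non-central element is $\langle k\rangle$-coset-equivalent to one of $g,h,gh,gh^2$. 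This yields the first sentence of the lemma.

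Next I would make each of the five conditions explicit. For $u \in \{k, g, h, gh, gh^2\}$, a point $z + K \in T$ is fixed by $\Phi(u)$ iff there is $w \in E^4$ (representing a class in $K$) with $(\rho(u) - \id_{E^4})z = w - \tau(u)$ for some $z \in E^4$; equivalently, $\tau(u) \bmod K$ lies in the image of $\rho(u) - \id_{E^4}$ acting on $T$. The key point — already recorded after Proposition \ref{kernel-prop} — is that every $(t_1,t_2,t_3,t_4) \in K$ satisfies $t_2+t_3+t_4 = 0$, which lets me pin down the relevant images of $\rho(u)-\id$ coordinate by coordinate. For $u = k$ the linear part is $\diag(1,\zeta_3,\zeta_3,\zeta_3)$ (Remark \ref{heis-action}/Proposition \ref{isog}), so $\rho(k)-\id$ is invertible on the last three coordinates and zero on the first; hence a fixed point exists iff $c_1$ (the first coordinate of $\tau(k)$) becomes the first coordinate of some element of $K$ — condition (1). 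For $u = g$, whose linear part permutes the last three coordinates cyclically and is the identity on the first, $\rho(g)-\id$ is zero on the first coordinate and on the last three it has image equal to the "sum-zero" subspace; using $t_2+t_3+t_4=0$ for elements of $K$, the obstruction collapses to whether $3a_1 = (\rho(g)^3-\id)$-type term vanishes, giving condition (2), $(\zeta_3-1)a_1 \neq 0$ — here one uses that the triple $(a_2,a_3,a_4)$ contributes nothing because its image automatically matches the sum-zero constraint. The elements $h$, $gh$, $gh^2$ have linear parts $\diag(\zeta_3,1,\zeta_3^2,\zeta_3)$, $\diag(\zeta_3,\zeta_3,1,\zeta_3^2)$, $\diag(\zeta_3,\zeta_3^2,\zeta_3,1)$ respectively (cf. the computations of $hk$, $hk^2$ in Corollary \ref{cor-c1-b2-ord3} and of $gh$, $gh^2$), so in each case $\rho(u)-\id$ is invertible except in exactly one coordinate, and the fixed-point obstruction is that the translation part of $\Phi(u)$ in that coordinate — which one computes from $\tau(g),\tau(h)$ via the cocycle relation $\tau(gh) = \tau(g)+\rho(g)\tau(h)$, etc. — be realizable as the corresponding coordinate of an element of $K$. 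Working these out yields conditions (3), (4), (5) after simplification using $t_2+t_3+t_4=0$.

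The main obstacle I anticipate is bookkeeping rather than conceptual: correctly assembling the translation parts of $gh$ and $gh^2$ from $\tau(g)$, $\tau(h)$ via the $1$-cocycle identities, tracking how the coordinate permutation in $\rho(g)$ interacts with the $\zeta_3$-scalars in $\rho(h)$, and then checking that in each of the five cases the "sum-zero" property of $K$ exactly kills the coordinates where $\rho(u)-\id$ is invertible, leaving a single scalar inequality in $E$. One has to be careful that for $k$, $h$ the obstruction is phrased as "is never a coordinate of an element of $K$" (a genuinely $K$-dependent condition, since $\rho(u)-\id$ has a genuine kernel direction that $K$ might meet), whereas for $g$, $gh$, $gh^2$ the obstruction reduces to a plain non-vanishing in $E$ because Proposition \ref{kernel-prop} forces the relevant coordinate of $K$ to vanish. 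For the converse direction of the lemma I would simply note that conditions (1)--(5) are visibly implied by freeness (take $u$ the corresponding element and a would-be fixed point), so the equivalence is immediate once the five obstructions have been identified with (1)--(5).
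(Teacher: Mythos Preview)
Your overall strategy --- reduce to conjugacy-class representatives and solve the lifted fixed-point equation for each --- is the paper's, but there are two genuine gaps in the execution.

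First, and most seriously, $\rho(gh)$ and $\rho(gh^2)$ are \emph{not} diagonal. Since $\rho(g)$ is the permutation matrix cycling the last three coordinates, $\rho(gh)=\rho(g)\rho(h)$ has the same block shape as $\rho(g)$: the first coordinate carries the eigenvalue $\zeta_3$, but on the last three coordinates one has a rank-$2$ permutation-type $3\times 3$ block, not a diagonal one. (The diagonal matrices you wrote down are $\rho(hk)$ and $\rho(hk^2)$.) Consequently the fixed-point analysis for $gh$ and $gh^2$ is parallel to that of $g$, not to that of $h$ or $k$: one computes the cokernel of the $3\times 3$ block, obtains a single linear functional in $(a_2,a_3,a_4,b_2,b_3,b_4)$, and uses $t_2+t_3+t_4=0$ for $t\in K$ to see that the $K$-contribution drops out. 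This is exactly how the expressions in (4) and (5) arise. Your ``invertible except in one coordinate'' picture would instead yield a condition of the form ``$\tau(gh)_j$ is the $j$-th coordinate of some element of $K$'', which does not match (4) or (5).

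Second, your derivation of condition~(2) is incomplete. The fixed-point equation for $g$ gives \emph{two} obstructions: $a_1\in p_1(K)$ \emph{and} $a_2+a_3+a_4=0$. The sum-zero constraint on $K$ kills the $t$-contribution in the last three coordinates, but not the $a$-contribution --- so ``$(a_2,a_3,a_4)$ contributes nothing'' is false, and freeness of $g$ alone does not force $(\zeta_3-1)a_1\neq 0$. The paper closes this gap by showing that (2) is implied by (1): if $(\zeta_3-1)a_1=0$, then the first coordinate of $-v_4\in K$ (Lemma~\ref{well-defined}) equals $c_1$, contradicting (1). Conversely, $(\zeta_3-1)a_1\neq 0$ gives $a_1\notin\Fix_E(\zeta_3)\supseteq p_1(K)$, hence $g$ is free. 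So (2) is the correct condition only in conjunction with (1).

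A minor point on the reduction step: there are eight non-central conjugacy classes (the non-trivial $\langle k\rangle$-cosets), not four. You need the observation that $u$ is fixed-point-free iff $u^2$ is (since $u^3=1$), together with the identifications $(gh)^2\sim g^2h^2$ and $(g^2h)^2\sim gh^2$, to cut down to $g,h,gh,gh^2$; the paper spells this out.
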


\begin{rem}
	Later we will show that $K=\langle (0, \ t, \ t, \ t) \rangle$ or
	$\langle (0, \ t, \ -t, \ 0),  (0, \ t, \ t, \ t) \rangle$. In particular condition (1) will be obsolete, since $c_1\neq 0$ in $E_1$ (Assumption \ref{free-assump}). Furthermore, condition (3) then  translates into  the condition that $(\zeta_3-1)b_2 \neq 0$.
\end{rem}

\begin{proof}[Proof of Lemma \ref{lemma-freeness}]
	Note that an element $u \in \Heis(3)$ acts freely if and only if $u^2$ acts freely, because $u^3=1$ for all $u \in \Heis(3)$.
	Observe that $(gh)^2$ is conjugate to $g^2h^2$ and $(g^2h)^2$ is conjugate to $gh^2$.
	Thus, the representatives  $g$, $g^2$,$h$, $h^2$, $k$, $k^2$, $gh$, $g^2h$, $gh^2$ and $g^2h^2$ of the ten non-trivial  conjugacy classes act freely if and only if
	$g$, $h$, $k$, $gh$ and $gh^2$ act freely. \\
	Clearly, $k$ and $h$ act freely if and only if (1) and (3) hold, respectively. The element $g$ has a fixed point on $T$ if and only if there is $z = (z_1, \ldots, z_4) \in T$ such that
	\begin{align*}
	&g(z) = (z_1 + a_1, \ z_4 + a_2, \ z_2 + a_3, \ z_3 + a_4) = (z_1, \ldots, z_4) \text{ in } T \\
	\iff &(a_1, \ z_4 - z_2 + a_2, \ z_2 - z_3 + a_3, \ z_3 - z_4 + a_4) = 0 \text{ in } T.
	\end{align*}
	Reading the second line of the above equivalence as an equation in $E^4$ and setting $w_2 := z_4 - z_2$, $w_3 := z_2 - z_3$, we infer that the above is satisfied if and only if there is $(t_1, t_2, t_3, t_4) \in  K$ such that
	\begin{equation} \label{g-fp-eq}
	(a_1, \ w_2 + a_2, \ w_3 + a_3, \ -w_2 - w_3 + a_4) = (t_1,t_2,t_3,t_4) \text{ in } E^4. 
	\end{equation}
	Thus, equation \ref{g-fp-eq} can only be satisfied if $a_1 = t_1$, $w_2 = t_2 - a_2$, $w_3 = t_3 - a_3$. Calculating the fourth coordinate, we obtain that
	\begin{align*}
	-w_2 - w_3 + a_4 = -t_2 - t_3 + a_2 + a_3 + a_4 = t_4.
	\end{align*}
	Since $t_2 + t_3 + t_4 = 0$ (Proposition \ref{kernel-prop}), we obtain that $a_2 + a_3 + a_4 = 0$. \\
	In total, this shows that $g$ has a fixed point on $T$ if and only if $(\zeta_3-1)a_1 = 0$ and $a_2 + a_3 + a_4 = 0$. It follows that $(\zeta_3-1)a_1 \neq 0$ is a sufficient condition for the freeness of $g$. Moreover, this condition is necessary for the freeness of $k$, since otherwise the first coordinate of the vector $-v_4 = 0 \in T$ defined in Lemma \ref{well-defined} is $c_1$, which is impossible by (1). \\
	Conditions (4) and (5) mean that $gh$ and $gh^2$ act freely, respectively: this is proved as above.
\end{proof}

We are now in the situation to complete step (1) of our classification scheme (see p. \pageref{scheme-first}) in the Heisenberg case. In fact, the upcoming proposition shows that only the two kernels $K_1$ and $K_2$ from Theorem \ref{Mani} \ref{Mani2} allow a free $\Heis(3)$-action.

\begin{prop}\label{propExcludeK}
Let $\phi \colon \Heis(3) \hookrightarrow \Aut(E^4/K)$ be a  free action in standard form. Then $K \neq \{0\}$ and $K \neq \langle (t',t,t,t)\rangle$ for $t' \neq 0$.

\end{prop}

\begin{proof}
Let $v := \zeta_3^2(a_2+a_3) + a_4 + \zeta_3^2(b_2+b_4) + b_3$ be the element of  Lemma \ref{lemma-freeness} (4). A direct calculation yields
	\begin{align*}
	(gh)^3(z) = (z_1, \ z_2 + \ze_3 v, \ z_3 + \zeta_3 v, \ z_4 + v).
	\end{align*}
Since $(gh)^3$ is the identity on $E^4/K$, the element
	\begin{align*}
		(0, \ \zeta_3 v, \ \zeta_3 v, \ v)
	\end{align*}
	is contained in the kernel $K$. Now, if $v \neq 0$, then $K \neq \langle (t',t,t,t)\rangle$ for $t' \neq 0$. On the other hand, if $K = \{0\}$, then $v = 0$ and hence Lemma \ref{lemma-freeness} (4) shows that the action of $\Heis(3)$ is not free.
\end{proof}

Let $T = \CC^4/\Lambda$, where $\Lambda$ is one of the lattices
\begin{align*}
\Lambda_1 := \ZZ[\zeta_3]^4 + \langle (0, \ t, \ t, \ t)\rangle \ \ \ \text{ or } \ \ \ \Lambda_2 := \Lambda_1 + \langle (0, \ t, \ -t, \ 0)\rangle
\end{align*}
corresponding to the two remaining kernels $K_1$ and $K_2$. The results from above allow us to determine all free actions $\Phi \colon \Heis(3) \hookrightarrow \Aut(T)$ in standard form. For a more efficient computation, we prove a refinement of 
Corollary \ref{cor-c1-b2-ord3}:

\begin{lemma} \label{elementary-properties}
	Let $T$ be one of the two complex tori above, and let $\Phi \colon \Heis(3) \hookrightarrow \Aut(T)$ be a free action in standard form. Then, the following statements hold in $E$:
	\begin{itemize}
		\item[(a)] $(1-\zeta_3)a_1 = c_1$ and $a_1$ is an element of order $3$ which is not fixed by $\zeta_3$.
		\item[(b)] The elements $b_2, b_3$ and $b_4$ of order $3$ are not fixed by $\zeta_3$. In particular, the element  $v_1$ defined in  
		Lemma \ref{well-defined} is equal to 
		$\pm (0,t,t,t)$ in $E^4$.
		\item[(c)] There are $j_3,j_4$ such that $b_3 = \zeta_3^{j_3} b_2$ and $b_4 = \zeta_3^{j_4} b_2$.
		\item[(d)] $\ord(a_2), \ord(a_3), \ord(a_4) \in \{1,3\}$.
	\end{itemize}
\end{lemma}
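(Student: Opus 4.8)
The strategy is to combine the vanishing of the vectors $v_1,\dots,v_6$ from Lemma \ref{well-defined} with the explicit knowledge that $K \in \{K_1, K_2\}$ (so that, by Proposition \ref{kernel-prop} and the discussion there, every element of $K$ has vanishing first coordinate and its last three coordinates sum to zero), and then to feed these into the freeness conditions of Lemma \ref{lemma-freeness}. Each item is essentially forced once we track which coordinates of the $v_i$ land in $\Fix_E(\zeta_3)$ versus which must be zero outright, using repeatedly the injectivity of $E_i \hookrightarrow T$.

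First I would prove (a). Since $K \subseteq \{0\}\times E^3$ for both choices of $K$, the first coordinate of $v_4$, namely $(1-\zeta_3)a_1 - c_1$, must be zero in $E_1$ itself (it is the first coordinate of an element of $K$), which gives $(1-\zeta_3)a_1 = c_1$. By Assumption \ref{free-assump}, $\ord(c_1)=3$, so $a_1$ has order divisible by $3$; but the first coordinate of $v_5$ is $3a_1$, which again lies in $p_1(K)=\{0\}$, forcing $3a_1=0$, hence $\ord(a_1)=3$. If $a_1$ were fixed by $\zeta_3$ then $c_1 = (1-\zeta_3)a_1 = 0$, contradicting $\ord(c_1)=3$; so $a_1 \notin \Fix_E(\zeta_3)$. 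For (b): the last three coordinates of $v_1$ are $(1-\zeta_3)b_j$; since $v_1 \in K$ and every nonzero element of $K \subseteq \Fix_E(\zeta_3)^4$ has order $3$ while $b_j$ has order $3$, I would argue that $(1-\zeta_3)b_j$ cannot be zero — otherwise $b_j \in \Fix_E(\zeta_3)$, which is excluded by condition (3) of Lemma \ref{lemma-freeness} for $j=2$ (and analogously, via the freeness of $hk$ and $hk^2$, for $j=3,4$; alternatively note $v_1$'s coordinates are all equal up to sign forcing all $b_j$ to behave the same way as $b_2$). Hence each $(1-\zeta_3)b_j \in \{t,-t\}$, and since $v_1$ is a nonzero element of $K$ with all three last coordinates forced into $\{t,-t\}$ and summing to zero, one checks $v_1 = \pm(0,t,t,t)$ in $E^4$.

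For (c): from (b) we have $(1-\zeta_3)b_2 = (1-\zeta_3)b_3 = (1-\zeta_3)b_4$ up to the common sign dictated by $v_1 = \pm(0,t,t,t)$; actually $v_1$ forces $(1-\zeta_3)b_2 = (1-\zeta_3)b_3 = (1-\zeta_3)b_4$ exactly, so $b_2 - b_3, b_2 - b_4 \in \ker(1-\zeta_3) = \Fix_E(\zeta_3) = \{0,t,-t\}$. Now I would invoke Lemma \ref{9-torsion-span}(a) or a direct computation: the elements of order $3$ in $E$ are $\{1/3,\ \zeta_3/3,\ \zeta_3^2/3\}$ up to sign, i.e. the $\ZZ_3$-multiples of these; since $b_2,b_3,b_4$ all have order $3$ and pairwise differ by an element of $\Fix_E(\zeta_3)$, writing $b_3 - b_2$ and $b_4 - b_2$ as such differences pins down $b_3 = \zeta_3^{j_3} b_2$, $b_4 = \zeta_3^{j_4}b_2$ for suitable $j_3,j_4$ (this is the same computation already carried out in the proof of Proposition \ref{propExcludeK}, case II). Finally (d): the last three coordinates of $v_6$ are $(1-\zeta_3)a_j$, and since $v_6 \in K$ and $K$'s elements have order $3$, the $(1-\zeta_3)a_j$ are either $0$ or in $\{t,-t\}$; combined with the last three coordinates of $v_5$ all equalling $a_2+a_3+a_4$, and the first coordinate of $v_5$ being $3a_1=0$ (from (a)), I would deduce the $a_j$ are $9$-torsion at worst and then rule out order $9$ exactly as in the proof of Proposition \ref{prop-structure-K}, using Lemma \ref{9-torsion-span}(b) and Corollary \ref{cor-c1-b2-ord3}(b) — if some $a_j$ had order $9$ with $3a_j \in \Fix_E(\zeta_3)$, then $a_j$ and $\zeta_3 a_j$ would span $\ZZ_3\times\ZZ_9$ forcing $E_j[3] \subseteq p_j(K)$, a contradiction. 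Hence $\ord(a_j) \in \{1,3\}$.

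The main obstacle I anticipate is item (c): unlike the others, it is not a pure order/fixed-locus bookkeeping statement but genuinely uses the arithmetic of $3$-torsion in $\ZZ[\zeta_3]$ — one has to know that two order-$3$ points of $E$ differing by a fixed point of $\zeta_3$ are necessarily $\zeta_3$-multiples of one another. I expect this to follow from a short explicit check (the six order-$3$ points are $\pm 1/3, \pm\zeta_3/3, \pm\zeta_3^2/3$, and $t = (1+2\zeta_3)/3$, so all the relevant differences can be listed), but it is the one place where a genuine computation, rather than a structural argument, is unavoidable. Everything else reduces to the two facts "$v_i \in K$" and "$K \subseteq \Fix_E(\zeta_3)^4$ with no nonzero coordinate-unit multiples", applied coordinate by coordinate.
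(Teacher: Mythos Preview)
Your arguments for (a), (b), and (c) are correct and essentially identical to the paper's proof (the paper is more terse for (c), simply pointing back to the computation in Proposition \ref{propExcludeK}, which you reproduce).

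Your argument for (d), however, has a genuine gap. You correctly observe that $v_6 \in K \leq \Fix_E(\zeta_3)^4$ forces $(1-\zeta_3)a_j \in \Fix_E(\zeta_3)$. But your plan to then pass through ``$9$-torsion at worst'' and invoke Lemma \ref{9-torsion-span}(b) together with Corollary \ref{cor-c1-b2-ord3}(b) does not go through: in the proof of Proposition \ref{prop-structure-K} that you are mimicking, the crucial input was that the element of order $9$ was a \emph{coordinate of an element of $K$}, so that applying $\rho(u)$ (which stabilises $K$) produced further elements of $K$ and hence enlarged $p_j(K)$. Here $a_j$ is merely a component of the translation vector $\tau(g)$; there is no mechanism placing $a_j$ or $\zeta_3 a_j$ into $p_j(K)$, so you cannot conclude $E_j[3] \subseteq p_j(K)$.

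The fix is much simpler than the detour you attempt, and you have already set it up: the condition $(1-\zeta_3)a_j \in \Fix_E(\zeta_3)$ means $(\zeta_3-1)(1-\zeta_3)a_j = 0$, i.e.\ $(1-\zeta_3)^2 a_j = 0$. Since $(1-\zeta_3)^2 = -3\zeta_3$ in $\ZZ[\zeta_3]$ and $\zeta_3$ is a unit, this is equivalent to $3a_j = 0$. This is exactly the paper's one-line proof of (d); no appeal to $v_5$, $9$-torsion, or the structure of $p_j(K)$ is needed.
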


\begin{proof}
	(a) The first coordinate of every element of $K$ is zero, and thus, the property that $v_4$ and $v_5$ are zero in $T$ shows that $3a_1 = 0$ and $(1-\zeta_3)a_1 = c_1$. Now, if $a_1$ is fixed by $\zeta_3$, the first coordinate of $v_4$ is $-c_1$. Thus, $c_1 = 0$ and $k$ does not act freely, a contradiction. \\
	(b) If $b_2$ (resp. $b_3$, $b_4$) is fixed by $\zeta_3$, then the torsion group $K$ contains an element whose second (resp. third, fourth) coordinate is $b_2$ (resp. $b_3$, $b_4$): in this case, the element $h$ (resp. $hk$, $hk^2$) does not act freely on $T$. The last statement follows from the description of the possible kernels $K$ since $(1-\zeta_3)b_i \neq 0$. \\
	(c) It was shown in (b) that $b_2$, $b_3$ and $b_4$ are elements of order $3$ that are not fixed by $\zeta_3$. Hence
		\begin{align} \label{b3b4-not-fixed}
		b_3,b_4 \in E[3] \setminus \Fix_{E}(\zeta_3) = \{b_2,\ \zeta_3 b_2,\ \zeta_3^2 b_2,\ -b_2,\ -\zeta_3 b_2,\ -\zeta_3^2 b_2\}.
		\end{align} 
		Furthermore, the second assertion of part (b) yields
		\begin{align*}
		(1-\zeta_3)b_2 = (1-\zeta_3)b_3 = (1-\zeta_3)b_4 \in \{t,-t\}.
		\end{align*}
		In particular, $b_2-b_3$ and $b_2-b_4$ are contained in $\Fix_{E}(\zeta_3)$. Combined with \ref{b3b4-not-fixed}, this easily implies that $b_3, b_4 \in  \{b_2, \zeta_3 b_2, \zeta_3^2 b_2\}$. \\
	(d) Since $v_6$ is contained in $K$, the elements $(1-\zeta_3)a_i$ are fixed by $\zeta_3$ for $i \in \{2,3,4\}$. This is the case if and only if $3a_i = 0$. 
\end{proof}

\begin{rem}\label{13rem}
	Let $\Phi \colon \Heis(3) \hookrightarrow \Aut(T)$ be an action in standard form. In the case where $b_2$ is not fixed by $\zeta_3$, it holds  
	$b_2 = (-1)^i \zeta_3^j/3$, and conjugation by $(-1)^i \zeta_3^{-j} \id_T$ yields an action in standard form with $b_2=1/3$. 
\end{rem}

\begin{rem}\label{ActHeis3}
According to part (b) and Remark \ref{13rem}, we may assume that $b_2 = 1/3$. By part (a), we have $a_1 = \pm \zeta_3^j/3$, and, up to conjugation of the action in the first coordinate, $a_1 = 1/3$ (cf. Remark \ref{13rem}).  Thus $c_1 = (1-\zeta_3)/3$. \\
It is now easy to construct all free actions $\Phi$:
\begin{itemize}
	\item run over all $j_3, j_4 \in \{0,1,2\}$ and define $b_3 := \zeta_3^{j_3}/3$, $b_4 := \zeta_3^{j_4}/3$,
	\item run over all $a_2,a_3,a_4 \in E[3]$ and define the elements $v_1, \ldots, v_6$ of Lemma \ref{well-defined},
	\item check if $v_1, \ldots, v_6$ are contained in $\Lambda_i$ and if the freeness conditions of Lemma \ref{lemma-freeness} are satisfied.
\end{itemize}
Running a MAGMA implementation, we find $108$ actions for the  lattice $\Lambda_1$ and $324$ for the lattice $\Lambda_2$, some of which may coincide on $\mathbb C^4/\Lambda_i$. 
For both lattices $\Lambda_i$, there are four special cohomology classes in $H^1(\Heis(3),\mathbb C^4/\Lambda_i)$.
\end{rem}

\subsection{Kernels and Actions for $\mathbb Z_3^2$}

In the $\mathbb Z_3^2$ case, kernels $K \in \mathcal K$ are clearly invariant under the action of $\mathbb Z_3^2$ via $\rho$. However, different kernels might yield isomorphic hyperelliptic manifolds. In order to take this into account, we define a group action on $\mathcal K$ such that kernels belonging to different orbits cannot correspond to isomorphic hyperelliptic manifolds. 

\begin{prop} \label{Z3^2-normalizer}
	Any biholomorphism $f \colon X \to X'$ between two hyperelliptic manifolds with holonomy group $\ZZ_3^2$ is induced by a biholomorphic map $\hat{f} \colon E^4 \to E^4$, $z \mapsto Az+d$. This means that  $A$ is contained in the normalizer $N_{\Aut_0(E^4)}(\ZZ_3^2)$.   
\end{prop}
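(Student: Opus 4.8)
The plan is to combine Corollary \ref{GeometricBieber}(4) with a structural analysis of the analytic representation $\rho$ of $\mathbb{Z}_3^2$ and the concrete description $T = E^4/K$. First, Corollary \ref{GeometricBieber}(4) already tells us that a biholomorphism $f\colon X \to X'$ lifts to a biholomorphism $\widetilde\alpha$ of the tori $T \to T'$ of the form $z \mapsto Az + d$ with $A \in \GL(4,\mathbb{C})$, and by Remark \ref{Con1and2} that $\rho$ and $\rho' \circ \varphi$ are equivalent as complex representations for some isomorphism $\varphi\colon \mathbb{Z}_3^2 \to \mathbb{Z}_3^2$. Since $\mathbb{Z}_3^2$ is abelian, $\rho$ decomposes as a direct sum of four characters, and by Remark \ref{analyticreps} these characters (up to an automorphism of $\mathbb{Z}_3^2$) are $\chi_1 = (a,b)\mapsto \zeta_3^a$, $\chi_2 \mapsto \zeta_3^b$, $\chi_3 \mapsto \zeta_3^{2a+b}$, $\chi_4 \mapsto \zeta_3^{a+b}$. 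These four characters are pairwise distinct (and all nontrivial), so the isotypical decomposition $\mathbb{C}^4 = L_1 \oplus L_2 \oplus L_3 \oplus L_4$ into lines is canonical. Now $A$ intertwines $\rho$ and $\rho'\circ\varphi$; composing with the permutation of coordinates realizing $\varphi$ (which is itself an element of $N_{\Aut_0(E^4)}(\mathbb{Z}_3^2)$, as it permutes the eigenlines and preserves the lattice $\mathbb{Z}[\zeta_3]^4$), we may assume $A$ intertwines $\rho$ with itself, hence $A$ must preserve each eigenline $L_i$. Therefore $A = \diag(\lambda_1,\lambda_2,\lambda_3,\lambda_4)$ with $\lambda_i \in \mathbb{C}^\ast$.

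The second step is to show $A$ maps $E^4$ to $E^4$, equivalently $\lambda_i \cdot \mathbb{Z}[\zeta_3] = \mathbb{Z}[\zeta_3]$ for each $i$, i.e.\ $\lambda_i$ is a unit of $\mathbb{Z}[\zeta_3]$, so $\lambda_i \in \{\pm 1, \pm\zeta_3, \pm\zeta_3^2\} \subset \Aut(E)$. For this I would use $A \cdot \Lambda = \Lambda'$ from Corollary \ref{GeometricBieber}(2), where $\Lambda, \Lambda'$ are the lattices of $T, T'$; both contain $\mathbb{Z}[\zeta_3]^4$ with finite index (Proposition \ref{isog} and the surrounding discussion), and both are contained in $\frac{1}{3}\mathbb{Z}[\zeta_3]^4$ by Proposition \ref{prop-structure-K} (since $K \leq \Fix_E(\zeta_3)^4 \subset E[3]^4$). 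The key point is that the diagonal entry $\lambda_i$ must send the $\mathbb{Z}[\zeta_3]$-module $\mathbb{Z}[\zeta_3] \subset p_i(\Lambda)$ into $\frac{1}{3}\mathbb{Z}[\zeta_3]$ and, by looking at $A^{-1}$ which is also of this form, the reverse inclusion up to the denominator $3$ as well; combined with the fact that $A$ must also be $\mathbb{Z}[\zeta_3]$-linear (it commutes with $\rho(h), \rho(k)$, each of whose eigenvalue on $L_i$ is a primitive cube root of unity, so multiplication by $\zeta_3$ on $L_i$ is realized inside $\im(\rho)$, forcing $\lambda_i \zeta_3 = \zeta_3 \lambda_i$ trivially but more usefully that $A$ is a map of $\mathbb{Z}[\zeta_3]$-lattices), one concludes $\lambda_i \in \mathbb{Z}[\zeta_3]^\ast$. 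Hence $A \in \Aut_0(E^4)$, and since $A$ conjugates $\im(\rho)$ to $\im(\rho')$ and normalizes (after the initial permutation reduction) $\im(\rho) \cong \mathbb{Z}_3^2$, we get $A \in N_{\Aut_0(E^4)}(\mathbb{Z}_3^2)$. Finally, $\widehat{\alpha}$ descends the map $\widetilde\alpha$ through the quotients, which gives the claimed factorization $\widehat{f} = \widetilde\alpha$.

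I expect the main obstacle to be the second step: carefully pinning down that the diagonal scalars $\lambda_i$ are forced to be $\mathbb{Z}[\zeta_3]$-units rather than merely complex numbers scaling one sublattice of $\frac13 \mathbb{Z}[\zeta_3]$ onto another. One has to use simultaneously that $A$ is $\mathbb{C}$-linear, that it is $\mathbb{Z}[\zeta_3]$-linear on each eigenline (from commuting with $\rho(h),\rho(k)$), that $A\Lambda = \Lambda'$ with both lattices sandwiched between $\mathbb{Z}[\zeta_3]^4$ and $\frac13\mathbb{Z}[\zeta_3]^4$, and an inversion argument on $A^{-1}$; the cleanest route is probably to first observe that a $\mathbb{Z}[\zeta_3]$-linear automorphism of a rank-one $\mathbb{Z}[\zeta_3]$-submodule of $\mathbb{Q}(\zeta_3)$ onto another is multiplication by an element of $\mathbb{Q}(\zeta_3)^\ast$, then show the scalar must actually be a unit by a common-denominator argument using that $3\Lambda, 3\Lambda' \subset \mathbb{Z}[\zeta_3]^4$ while $\Lambda, \Lambda' \supset \mathbb{Z}[\zeta_3]^4$. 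A minor subtlety is handling $\varphi$: one should check at the outset that every automorphism of $\mathbb{Z}_3^2$ permitted here (i.e.\ one realized by an element of $\GL(4,\mathbb{C})$ intertwining the analytic representations) is induced by coordinate permutations together with the $\mathbb{Z}[\zeta_3]$-semilinear symmetries already present in $N_{\Aut_0(E^4)}(\mathbb{Z}_3^2)$, so that reducing to $\varphi = \id$ costs nothing.
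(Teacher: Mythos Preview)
Your overall strategy coincides with the paper's: lift via Corollary~\ref{GeometricBieber}(4), use the intertwining relation $A\rho A^{-1}=\rho\circ\varphi$ to see that $A$ is monomial, then pin down the scalars as units of $\ZZ[\zeta_3]$. The two places you flag as subtleties are exactly where the paper is sharper, and your sandwich argument for the second one would not close as stated.

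For the automorphism $\varphi$, rather than reducing to $\varphi=\id$ by composing with a permutation, the paper argues directly from eigenvalue multiplicities: among the matrices $\rho(u)$, only $\rho(k)=\diag(1,\zeta_3,\zeta_3,\zeta_3)$ has the eigenvalue $\zeta_3$ with multiplicity three, so necessarily $\varphi(k)=k$; and only $\rho(h),\rho(hk),\rho(hk^2)$ share the eigenvalue pattern of $\rho(h)$, so $\varphi(h)\in\{h,hk,hk^2\}$. This gives three explicit monomial shapes for $A$ (diagonal or diagonal times a $3$-cycle on the last three coordinates), with no need to first check which $\varphi$ are realizable.

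For the integrality of the scalars, your common-denominator idea using $\ZZ[\zeta_3]^4\subset\Lambda,\Lambda'\subset\tfrac{1}{3}\ZZ[\zeta_3]^4$ only yields $\lambda_i,\lambda_i^{-1}\in\tfrac{1}{3}\ZZ[\zeta_3]$, which does \emph{not} force $\lambda_i$ to be a unit (e.g.\ $\lambda=(1-\zeta_3)^{-1}=(2+\zeta_3)/3$ satisfies both conditions). The paper's argument avoids this: since $A$ is monomial, $Ae_i\in\Lambda'$ has a single nonzero entry; but by Remark~\ref{remark-K} the kernel $K'\le\Fix_E(\zeta_3)^4$ contains no nonzero multiple of a unit vector, so any element of $\Lambda'=\ZZ[\zeta_3]^4+K'$ with only one nonzero coordinate must already lie in $\ZZ[\zeta_3]^4$. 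Hence the entries of $A$ are in $\ZZ[\zeta_3]$, and the same reasoning applied to $A^{-1}$ shows they are units. This is the missing idea in your plan.
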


\begin{proof}
	Write $X = T/\ZZ_3^2$ and $X'=T'/\ZZ_3^2$, where $T = E^4/K$ and $T' = E^4/K'$. 	
	 By Corollary \ref{GeometricBieber} (4), the isomorphism $f \colon X  \to X'$ is induced by an affinity  $\alpha(z) = Az + d$ with $A\in \GL(4,\mathbb C)$. Moreover there exists an automorphism $\varphi \in \Aut(\ZZ_3^2)$ such that  $A \rho A^{-1} = \rho \circ \varphi$.
	Note that the only matrix in the image of $\rho$ having the eigenvalue $\zeta_3$ with multiplicity $3$ is $\rho(k)$, thus 
	we infer that $\varphi(k) = k$. On the other hand, $\varphi(h) \in \{h, hk, hk^2\}$ because the matrices $\rho(h)$, $\rho(hk)$ and 
	$\rho(hk^2)$ have the same eigenvalues and all other matrices do not.
	It follows that $A$ takes one of the following forms:
	\begin{align*}
	\begin{pmatrix}
	a_1 &  &  &  \\ 
	 & a_2 &  &  \\
	 &  & a_3 &  \\
	 &  &  & a_4
	\end{pmatrix}, \quad \begin{pmatrix}
	a_1 & 0 & 0 & 0 \\
	0 & 0 & a_2 & 0 \\
	0 & 0 & 0 & a_3 \\
	0 & a_4 & 0 & 0
	\end{pmatrix} \quad \text{ or } \quad \begin{pmatrix}
	a_1 & 0 & 0 & 0 \\
	0 & 0 & 0 & a_2 \\
	0 & a_3 & 0 & 0 \\
	0 & 0 & a_4 & 0
	\end{pmatrix}.
	\end{align*}
	Note that for the above matrices, $A\cdot e_i$ is a vector with only one non-zero entry. Denoting by $\Lambda$ and $\Lambda'$ the lattices of $T$ and $T'$, respectively, this observation, together with the condition $A \cdot \Lambda = \Lambda'$  shows  that the entries of $A$ are Eisenstein integers. The same holds  for $A^{-1}$, and it follows that  the complex numbers $a_i$ are contained in $\ZZ[\zeta_3]^* = \langle - \zeta_3 \rangle$, hence $A \cdot \ZZ[\zeta_3]^4 = \ZZ[\zeta_3]^4$. 
\end{proof}
As a by-product of the proof we obtain: 
\begin{cor}\label{NormZ3hol}
	The normalizer  $N_{\Aut_0(E^4)}(\ZZ_3^2)$ is a finite group with 
	$6^4\cdot3=3888$ elements generated by the matrices
	\[
	\begin{pmatrix}
	-\zeta_3 & & & \\
	& 1 & & \\
	& & 1 & \\
	& & & 1
	\end{pmatrix}, \quad \begin{pmatrix}
	1 & & & \\
	& -\zeta_3 & & \\
	& & 1 & \\
	& & & 1
	\end{pmatrix}, \quad \begin{pmatrix}
	1 & 0 & 0 & 0 \\
	0 & 0 & 1 & 0 \\
	0 & 0 & 0 & 1 \\
	0 & 1 & 0 & 0
	\end{pmatrix}.
	\]
	As an abstract group, $N_{\Aut_0(E^4)}(\ZZ_3^2)$ is isomorphic to $\mathbb Z_6^4 \rtimes \mathcal A_3$. 
\end{cor}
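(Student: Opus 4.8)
The plan is to use the description of monomial matrices obtained in the course of proving Proposition~\ref{Z3^2-normalizer}. That argument shows that every $A$ in $N := N_{\Aut_0(E^4)}(\ZZ_3^2)$ is a monomial matrix whose underlying permutation $\sigma$ fixes the index $1$ and restricts to a cyclic permutation of $\{2,3,4\}$, so that $\sigma$ ranges over the $3$-element group $\mathcal A_3 = \langle(2\,3\,4)\rangle$, and whose non-zero entries lie in $\ZZ[\zeta_3]^* = \langle-\zeta_3\rangle \cong \ZZ_6$, since both $A$ and $A^{-1}$ must stabilize the lattice $\ZZ[\zeta_3]^4$. First I would establish the converse: every monomial matrix $A = PD$ of this shape, with $P$ the permutation matrix of some $\sigma \in \mathcal A_3$ and $D = \diag(d_1,\dots,d_4)$, $d_i \in \langle-\zeta_3\rangle$, actually normalizes $\rho(\ZZ_3^2)$. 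Since the $d_i$ commute with the diagonal group containing $\rho(\ZZ_3^2)$, it is enough to treat $P$, and here a one-line computation (permuting diagonal entries) gives $P\rho(h)P^{-1} = \rho(hk^2)$ and $P\rho(k)P^{-1} = \rho(k)$ for the generator displayed in the corollary, both of which lie in $\rho(\ZZ_3^2)$; conjugation by $P^2$ is handled the same way. Hence $N$ consists precisely of the monomial matrices just described, and $|N| = |\mathcal A_3|\cdot|\langle-\zeta_3\rangle|^4 = 3\cdot 6^4 = 3888$.

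Next I would check the generating set. The third displayed matrix is the permutation matrix $P$ of the $3$-cycle on $\{2,3,4\}$, and conjugating the diagonal matrix $\diag(1,-\zeta_3,1,1)$ (the second generator) by $P$ and $P^2$ produces $\diag(1,1,-\zeta_3,1)$ and $\diag(1,1,1,-\zeta_3)$; together with $\diag(-\zeta_3,1,1,1)$ (the first generator) these span the full diagonal subgroup $D \cong \ZZ_6^4$, and adjoining $P$ gives all of $N$. Finally, for the abstract structure I would observe that the assignment $A \mapsto \sigma$ sending a monomial matrix to its underlying permutation is a surjective homomorphism $N \to \mathcal A_3$ with kernel exactly $D \cong \ZZ_6^4$, and that the subgroup $\{I,P,P^2\} \cong \mathcal A_3$ of permutation matrices is a complement intersecting $D$ trivially. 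Therefore $N \cong \ZZ_6^4 \rtimes \mathcal A_3$, the action of $\mathcal A_3$ being the cyclic permutation of the last three $\ZZ_6$-factors.

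All of this is routine bookkeeping; the only place demanding a little attention is the converse direction, i.e.\ verifying that each monomial matrix of the allowed shape genuinely normalizes $\rho(\ZZ_3^2)$ rather than merely that normalizing matrices have this shape, together with keeping the conjugation action of $P$ consistent with the index conventions of Remark~\ref{analyticreps}. Neither is a real obstacle.
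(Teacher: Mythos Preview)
Your proposal is correct and follows exactly the line the paper has in mind: the corollary is stated there as an immediate by-product of the proof of Proposition~\ref{Z3^2-normalizer}, without a separate argument, and you have simply spelled out the steps the paper leaves implicit. In particular, your verification of the converse direction (that each monomial matrix with permutation in $\langle(2\,3\,4)\rangle$ and entries in $\langle-\zeta_3\rangle$ really normalizes $\rho(\ZZ_3^2)$), the count $3\cdot 6^4$, the generation check via conjugating $\diag(1,-\zeta_3,1,1)$ by $P$, and the split extension argument for $\ZZ_6^4\rtimes\mathcal A_3$ are all straightforward and match the intended reasoning.
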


Similarly as in the Heisenberg case, we formulate a simple criterion for the freeness of an action $\Phi \colon \mathbb Z_3^2 \hookrightarrow \Aut(T)$ in standard form. 

\begin{lemma}
An action $\Phi \colon \mathbb Z_3^2 \hookrightarrow \Aut(T)$  is free if and only if none of the elements
$h, k, hk$ and $hk^2$ have a fixed point. If $\Phi$ is in standard form, the 
latter holds precisely when
\begin{itemize}
	\item[(1)]
	$c_1$ is never the first coordinate of an element in $K$,
	\item[(2)]
	$b_j$ is never the $j$-th coordinate of an element in $K$ for all $j$.
\end{itemize}
\end{lemma}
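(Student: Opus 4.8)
The plan is to mirror the structure of the proof of Lemma~\ref{lemma-freeness}, but the argument is considerably shorter here because $\mathbb Z_3^2$ acts diagonally, so there is no analogue of the off-diagonal relation $t_2+t_3+t_4=0$ (coming from Proposition~\ref{kernel-prop}) to contend with. \textbf{Step 1 (reduction to four elements).} Every non-trivial element $u\in\mathbb Z_3^2$ has order $3$, so $u^3=\id_T$; hence if $u^2$ fixes a point $z\in T$, then $u^3(z)=u(z)$ gives $z=u(z)$, so $u$ acts freely if and only if $u^2$ does. The eight non-trivial elements split into the four pairs $\{h,h^2\}$, $\{k,k^2\}$, $\{hk,h^2k^2\}$, $\{hk^2,h^2k\}$, each of the form $\{u,u^2\}$, and together they exhaust $\mathbb Z_3^2\setminus\{1\}$. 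Therefore $\Phi$ is free if and only if none of $h,k,hk,hk^2$ has a fixed point on $T$, which is the first assertion.

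\textbf{Step 2 (fixed-point equation for a single element).} Fix $u\in\{h,k,hk,hk^2\}$ and work in standard form. Using $\rho(h)=\diag(\zeta_3,1,\zeta_3^2,\zeta_3)$, $\rho(k)=\diag(1,\zeta_3,\zeta_3,\zeta_3)$ from Remark~\ref{analyticreps} (and the identification of $\mathbb Z_3^2$ with $\langle h,k\rangle$), one computes that $\rho(u)$ is diagonal with the eigenvalue $1$ occurring with multiplicity exactly one, in coordinate $i=i(u)$ given by $i(k)=1$, $i(h)=2$, $i(hk)=3$, $i(hk^2)=4$; these are precisely the coordinates of the subtori $E_i$ of Proposition~\ref{isog}. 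A point $\bar z\in T=E^4/K$ is fixed by $u$ if and only if some lift $z\in E^4$ satisfies $(\rho(u)-\id_{E^4})z=\kappa-\tau(u)$ for some $\kappa\in K$. In the $i$-th coordinate this forces $[\tau(u)]_i=\kappa_i$, while in each coordinate $j\neq i$ the equation reads $(\zeta_3^{m_j}-1)z_j=\kappa_j-[\tau(u)]_j$ with $m_j\in\{1,2\}$; since multiplication by $\zeta_3-1$ — hence also by $\zeta_3^2-1=-\zeta_3^2(\zeta_3-1)$ — is an isogeny of $E$, in particular surjective, such a $z_j$ always exists. Thus $u$ has a fixed point on $T$ precisely when $[\tau(u)]_{i(u)}$ is the $i(u)$-th coordinate of some element of $K$.

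\textbf{Step 3 (reading off the conditions).} From the standard form $\tau(h)=(0,b_2,b_3,b_4)$, $\tau(k)=(c_1,0,0,0)$ and the cocycle identities $\tau(hk)=\tau(h)+\rho(h)\tau(k)=(\zeta_3 c_1,b_2,b_3,b_4)$ and $\tau(hk^2)=\tau(h)+\rho(h)\tau(k^2)=(2\zeta_3 c_1,b_2,b_3,b_4)$, the relevant coordinate $[\tau(u)]_{i(u)}$ is $c_1$ for $u=k$, $b_2$ for $u=h$, $b_3$ for $u=hk$, and $b_4$ for $u=hk^2$. Substituting into Step 2 and combining the four cases via Step 1 gives exactly conditions (1) and (2).

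I expect no genuine obstacle in this proof: the only point that needs care is the bookkeeping in Step 2, namely identifying for each of $h,k,hk,hk^2$ the single coordinate in which $\rho(u)$ fixes vectors, and then observing that surjectivity of multiplication by $\zeta_3-1$ on $E$ renders all the remaining coordinates irrelevant to freeness. This is precisely where the argument is simpler than that of Lemma~\ref{lemma-freeness}, since the relation $t_2+t_3+t_4=0$ that complicated the $\Heis(3)$ case plays no role when the holonomy acts diagonally.
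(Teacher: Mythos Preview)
Your proof is correct and follows precisely the approach the paper intends: the paper states this lemma without proof, prefacing it with ``Similarly as in the Heisenberg case,'' so the reader is meant to adapt the argument of Lemma~\ref{lemma-freeness}, which is exactly what you do. Your Step~1 matches the reduction $u\leftrightarrow u^2$ used there, and your Steps~2--3 correctly exploit that $\rho$ is diagonal for $\mathbb Z_3^2$, so that the only obstruction to solving the fixed-point equation lies in the unique coordinate where $\rho(u)$ has eigenvalue~$1$; the computations of $\tau(hk)$ and $\tau(hk^2)$ agree with those appearing in the proof of Corollary~\ref{cor-c1-b2-ord3}.
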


Observe that Corollary \ref{cor-c1-b2-ord3} shows that the elements $c_1$, $b_2$, $b_3$ and $b_4$ have order $3$, respectively. In particular, there are only finitely many free actions in standard form. Our MAGMA code determines all of these actions. The  number of these actions is displayed in  the middle column of the table from below. \\

\begin{rem}\label{13Shaolin}
 Note that  the normalizer  $N_{\Aut_0(E^4)}(\ZZ_3^2)$  acts on $\mathcal K$. Proposition  \ref{Z3^2-normalizer} tells us that two hyperelliptic fourfolds  $X$  and $X'$ cannot be biholomorphic if the corresponding kernels $K,K' \in \mathcal K$ belong to  different orbits of this action. Performing a MAGMA computation,  we find exactly $13$ orbits:  
	
	\medskip
	\begin{center}
	\bgroup\def\arraystretch{1.3}\begin{tabular}{|c|l|l| l |} \hline
			$i$  & representative $K_i$ of the orbit  &   \# of free actions & \# of special classes in $H^1(\mathbb Z_3^2, E^4/K_i)$  \\ 
			\hline \hline
			$1$ & $\{0\}$ & $16$ & $16$\\ 
			$2$ & $\langle (0,0,t,t)\rangle$ & $72$ & $8$ \\
			$3$ & $\langle (0,t,t,t)\rangle$ & $108$ & $12$ \\
			$4$ & $\langle (t,0,0,t) \rangle$ & $72$ & $8$ \\
			$5$ & $\langle (t,0,t,t)\rangle$ & $108$ & $12$ \\
			$6$ & $\langle (t,t,t,t) \rangle$ & $162$ & $18$  \\ \hline
			$7$ & $\langle (0,t,t,t), (0,t,-t,0)\rangle$ & $108$ & $4$ \\
			$8$& $\langle(0,0,t,t),\ (t,0,-t,0)\rangle$ & $108$ & $4$  \\
			$9$& $\langle (t,t,0,0),\ (0,0,t,t)\rangle$ & $324$ & $4$ \\
			$10$& $\langle (0,0,t,t),\ (t,t,0,t) \rangle$ & $162$ & $2$ \\
			$11$& $\langle (t,0,0,t),\ (t,t,t,-t) \rangle$ & $162$  & $2$ \\
			$\exc$ & $\langle (t,-t,0,t),\ (t,t,-t,0) \rangle$ & $0$ & $0$ \\ \hline
			$12$ & $\langle (-t,t,0,0),\ (t,0,t,t),\ (t,t,t,0) \rangle$ & $486$  & $6$ \\ \hline
		\end{tabular}\egroup
	\end{center}
\medskip
For all kernels $K_i$, except for $K_{\exc}$, there are rigid and free actions on $T_i=E^4/K_i$. Thus there are at least $12$ biholomorphism classes of rigid fourfolds with holonomy $\mathbb Z_3^2$. Moreover, every hyperelliptic fourfold with holonomy $\ZZ_3^2$ is obtained as a quotient of $T=E^4/K_i$.

\end{rem}

\section{Biholomorphism and Diffeomorphism Classes of Rigid Hyperelliptic Fourfolds} \label{section-Bihol}

In this section, we perform the final step of our classification scheme, outlined at the end of Section  \ref{bieberer-section}. 
The main part is devoted to determine for each group $G=\Heis(3)$ and  $\mathbb Z_3^2$ and for each kernel $K$ the normalizers
$\mathcal N_{\mathbb C}(\Lambda_K)$ and $\mathcal N_{\mathbb R}(\Lambda_K)$ explicitly, where $\Lambda_K=\mathbb Z[\zeta_3]^4 + K$. With the help of MAGMA,  we then determine  the orbits of the action of these groups on the special cohomology classes in $H^1(G,E^4/K)$.  Propositions \ref{TParts-Heis3} and \ref{TParts-Z3^2} impose conditions on the coboundaries and therefore allow an efficient computer search.
 Thereby, we distinguish all isomorphism and diffeomorphism  classes of rigid hyperelliptic fourfolds and prove our main theorems. In the code, the 
special cohomology classes  are represented by the translation parts of rigid and free $G$-actions in standard form.


\subsection{The Case $\Heis(3)$}


To compute the sets $\mathcal N_{\mathbb R}(\Lambda,\Lambda')$ and $\mathcal N_{\mathbb C}(\Lambda,\Lambda')$, where $\Lambda$ and $\Lambda'$ are one of 
\begin{align*}
	\Lambda_1 = \mathbb Z[\zeta_3]^4 + \langle (0,t,t,t)\rangle, \qquad \Lambda_2 = \Lambda_1 + \langle (0,t,-t,0)\rangle,
\end{align*} 
respectively, we will use the real representation theory of $\Heis(3)$, which we 
briefly recall.
Throughout the subsection, we will use the notation $\Lambda_i^\natural := \Lambda_i \cap (\lbrace 0 \rbrace \times \mathbb C^3)$ and exploit the fact that  $$\Lambda_i= \mathbb Z[\zeta_3] \oplus \Lambda_i^\natural.$$
\begin{rem}
The Heisenberg group $\Heis(3)$ has $5$ irreducible non-trivial real representations:  
\begin{itemize}
\item
Four  $2$-dimensional representations induced from the central quotient  $\mathbb Z_3^2$ by inflation. They map $(a,b) \in \mathbb Z_3^2$ to $B^b, \ B^{a+b}, \ B^a \quad \makebox{and} \quad B^{2a+b}$, respectively, where $B:=
-\frac{1}{2}
\begin{pmatrix} 1 &  \sqrt{3}  \\ -\sqrt{3} &1 \end{pmatrix}$.
\item
One irreducible $6$-dimensional representation, which is  the decomplexification $\rho_{3\mathbb R}$ of the representation $\rho_3$ defined in \ref{rho3}.
\end{itemize} 
The endomorphism algebra of each of these representations is isomorphic to  $\mathbb C$. 
\end{rem}

\begin{rem}\label{rem1}
A matrix $A \in \GL(8,\mathbb R)$ is contained in  $\mathcal N_{\mathbb R}(\Lambda,\Lambda')$ if and only if 
\begin{align}\label{ExConj}
A\rho_{\mathbb R} A^{-1} =\rho_{\mathbb R} \circ \varphi \qquad \makebox{for some} \qquad \varphi \in  \Aut(\Heis(3))
\end{align} 
and $A\cdot \Lambda =\Lambda'$. 
Since $\rho_{\mathbb R}=\rho_{1\mathbb R} \oplus \rho_{3\mathbb R}$
is the sum of an irreducible real  $2$-dimensional and an irreducible $6$-dimensional representation, 
 Schur's Lemma implies that $A$ is of block diagonal form 
\[
A=\diag(C,D), \quad \makebox{where} \quad  C \in \GL(2,\mathbb R)  \quad \makebox{and} \quad  D \in \GL(6,\mathbb R). 
\]
In particular, the  characters $\chi_{i \mathbb R}$  of $\rho_{i \mathbb R}$ are stabilized by  $\varphi$, i.e., $\chi_{i \mathbb R} = \chi_{i \mathbb R} \circ \varphi$. We write  
$\varphi \in \Stab(\chi_{1 \mathbb R}, \chi_{3 \mathbb R})$. 
Note that $\Stab(\chi_{1 \mathbb R}, \chi_{3 \mathbb R}) = \Stab(\chi_{1 \mathbb R})$  because  $\rho_{3\mathbb R}$ is the unique irreducible $6$-dimensional real 
representation of $\Heis(3)$.\\ On the other hand, given $\varphi \in \Stab(\chi_{1 \mathbb R})$, 
there exist invertible matrices  $C_{\varphi}\in \GL(2,\mathbb R)$ and $D_{\varphi} \in \GL(6,\mathbb R)$ such that 
$A_{\varphi}:=\diag(C_{\varphi},D_{\varphi})$ fulfills the equation \ref{ExConj}. \\
Clearly, the matrices  $C_{\varphi}$ and $D_{\varphi}$  are unique only up to a non-zero element in the endomorphism algebras
$\End_{\Heis(3)}(\rho_{1\mathbb R})$ and $\End_{\Heis(3)}(\rho_{3\mathbb R})$, respectively.  Both of these algebras are  isomorphic to $\mathbb C$. 
Any $\varphi \in \Stab(\chi_{i\mathbb R})$ either stabilizes $\chi_i$ or maps $\chi_i$ to $\overline{\chi}_i$ because 
 $\chi_{i \mathbb R}$ is equal to the sum of the complex characters $\chi_i$ and $\overline{\chi}_i$.  Moreover, a matrix defining an 
 equivalence between $\rho_{i \mathbb R}$  and $\rho_{i \mathbb R}\circ \varphi$ is $\mathbb C$-linear precisely if $\chi_i$ is stabilized and $\mathbb C$-antilinear if and only if
$\chi_i$ is mapped to $\overline{\chi}_i$, cf. Remark \ref{FundBie}.
In particular, $A_{\varphi}=\diag(C_{\varphi},D_{\varphi})$ is $\mathbb C$-linear if and only if $\varphi \in \Stab(\chi_1,\chi_3)$. 
\end{rem}

\begin{rem}\label{Antilinear}
Suppose that $C \in \GL(2,\mathbb R)$ and $\varphi \in \Stab(\chi_{1\mathbb R})$ such that 
$C\rho_{1\mathbb R}(u)C^{-1} =\rho_{1 \mathbb R}(\varphi(u))$. 
If $\varphi$ belongs to  $\Stab(\chi_1)$, then the matrix  $C$ commutes with $B$, otherwise it holds $CB=B^2 C$. 
We see that the matrix $C$ must have  the form 
\[
C=\begin{pmatrix} \lambda & -\mu \\ \mu & \lambda \end{pmatrix} \qquad \makebox{or} \qquad 
C=\begin{pmatrix} \lambda & \mu \\ \mu & -\lambda \end{pmatrix}.
\]
In complex coordinates, $C(z)= cz$ or  $C(z)= c\overline{z}$, where $c:=\lambda+ \sqrt{-1} \mu$. 
If there exists $D \in \GL(6,\mathbb R)$ such that $A = \diag(C,D) \in \mathcal N_{\mathbb R}(\Lambda,\Lambda')$, then $C \mathbb Z[\zeta_3] =  \mathbb Z[\zeta_3]$, because $\Lambda_i=\mathbb Z[\zeta_3] \oplus \Lambda_i^\natural$.
Hence, the scalar $c$ must be a unit in 
$\mathbb Z[\zeta_3]$. 
In total, there are  $12$ such matrices, they  
 form a dihedral group $\mathcal D_6$.  
\end{rem}


To determine the possibilities for $D$ is more involved. 
According to Remark \ref{rem1}, we obtain for each $\varphi \in \Stab(\chi_{1 \mathbb R})$ a matrix $D_{\varphi}$, which is either $\mathbb C$-linear or $\mathbb C$-antilinear and unique up to a nonzero constant.
Hence, $\varphi $ determines a  class $[D_{\varphi}] \in  \PGL(3,\mathbb C) \rtimes \mathbb Z_2$. It is clear that $[D_{\varphi}]\cdot [D_{\varphi'}]=[D_{\varphi\circ \varphi'}]$. In other words, we obtain 
 a semi-projective representation 
\[
\Xi \colon \Stab(\chi_{1 \mathbb R}) \to  \PGL(3,\mathbb C) \rtimes \mathbb Z_2, \quad  \varphi \mapsto [D_{\varphi}]. 
\]
It is  faithful because $\rho_{\mathbb R}$ is faithful, i.e., any class in the image of $\Xi$ corresponds to a unique automorphism $\varphi$. 
The normalizer  $\mathcal N_{\mathbb R}(\Lambda)$ has therefore a fairly simple description:

\begin{cor}	
The normalizer $\mathcal N_{\mathbb R}(\Lambda)$ is  equal to the fiber product 
\[
\mathcal N_{\mathbb R}(\Lambda) = \mathcal D_6 \times_{\mathbb Z_2} \lbrace D  \in  \GL(3,\mathbb C) \rtimes \mathbb Z_2 ~ 
\big\vert ~ [D]  \in \im(\Xi), ~ D \Lambda^\natural = \Lambda^\natural \rbrace 
\] 
defined via the  natural homomorphisms $\mathcal D_6 \to \mathcal D_6/\langle -\zeta_3\rangle \simeq  \mathbb Z_2$ and 
\[
 \lbrace D  \in  \GL(3,\mathbb C) \rtimes \mathbb Z_2 ~ 
\big\vert ~ [D]  \in \im(\Xi), ~ D \Lambda^\natural = \Lambda^\natural \rbrace \to \Stab(\chi_{1\mathbb R})/\Stab(\chi_1)\simeq \mathbb Z_2.
\]
\end{cor}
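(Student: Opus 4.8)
The statement identifies $\mathcal N_{\mathbb R}(\Lambda)$ with a fiber product, so the plan is to prove two inclusions while keeping track of the distinguished $\mathbb Z_2$-quotients that define the fiber product. First I would recall from Remark \ref{rem1} that every $A \in \mathcal N_{\mathbb R}(\Lambda)$ is block diagonal, $A = \diag(C,D)$, with $C \in \GL(2,\mathbb R)$ acting on the $\rho_{1\mathbb R}$-summand and $D$ acting on the $\rho_{3\mathbb R}$-summand, and that the associated automorphism $\varphi = \varphi_A$ lies in $\Stab(\chi_{1\mathbb R})$. The decomposition $\Lambda = \Lambda^\natural \oplus (\Lambda \cap \mathbb C)$ is respected by the block form (here $\Lambda \cap \mathbb C = \mathbb Z[\zeta_3]$ inside the first coordinate, since $\Lambda_i = \mathbb Z[\zeta_3]^4 + K$ with $K \subseteq \mathbb C^3$), so the single condition $A\Lambda = \Lambda$ splits as the pair of conditions $C\,\mathbb Z[\zeta_3] = \mathbb Z[\zeta_3]$ and $D\,\Lambda^\natural = \Lambda^\natural$. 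By Remark \ref{Antilinear}, the first condition forces $C$ into the dihedral group $\mathcal D_6$ of order $12$, while the second condition is exactly the defining condition of the second factor of the fiber product; and $[D] \in \im(\Xi)$ holds because $\varphi = \varphi_A$ determines $[D_\varphi]$ uniquely (faithfulness of $\Xi$) and $D$ is $\mathbb C$-linear or $\mathbb C$-antilinear according to whether $\varphi$ fixes $\chi_1$ or swaps $\chi_1 \leftrightarrow \overline{\chi}_1$, again by Remark \ref{rem1}.

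The key point that turns a product of conditions into a \emph{fiber} product is the compatibility over $\mathbb Z_2$: given $\varphi \in \Stab(\chi_{1\mathbb R})$, the matrices $C_\varphi$ and $D_\varphi$ are \emph{both} $\mathbb C$-linear when $\varphi \in \Stab(\chi_1)$ and \emph{both} $\mathbb C$-antilinear otherwise, since this linearity type is governed by the single datum of whether $\varphi$ fixes or conjugates $\chi_i$ (and $\chi_{1\mathbb R} = \chi_1 + \overline{\chi}_1$, $\chi_{3\mathbb R} = \chi_3 + \overline{\chi}_3$ share the same $\varphi$-action type). Concretely, under the map $\mathcal D_6 \to \mathcal D_6/\langle -\zeta_3 \rangle \cong \mathbb Z_2$, the class of $C$ records whether $C$ is of the form $z \mapsto cz$ or $z \mapsto c\overline{z}$; under the second map, $D \mapsto \Stab(\chi_{1\mathbb R})/\Stab(\chi_1) \cong \mathbb Z_2$ records whether $D_\varphi$ is $\mathbb C$-linear or $\mathbb C$-antilinear (equivalently, whether $[D] \in \PGL(3,\mathbb C)$ or in the nontrivial coset). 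These two $\mathbb Z_2$-invariants of $C$ and $D$ agree precisely when $(C,D)$ comes from a common $\varphi$, which is exactly the fiber product condition. So I would set up the two projections explicitly, check they are surjective group homomorphisms, and verify that an element $\diag(C,D) \in \mathcal D_6 \times \{D : [D]\in\im(\Xi),\ D\Lambda^\natural=\Lambda^\natural\}$ normalizes $\im(\rho_{\mathbb R})$ if and only if $C$ and $D$ have matching images in $\mathbb Z_2$ — the ``if'' direction requiring one to observe that $\varphi_C := \varphi$ attached to $C$ (well-defined since $C$ determines its linearity type and, via $\mathcal D_6 \hookrightarrow \GL(1,\mathbb C)\rtimes\mathbb Z_2$, a unique element of $\Stab(\chi_{1\mathbb R})$) coincides with $\varphi_D$, whence $\diag(C_{\varphi}, D_{\varphi})$ differs from $\diag(C,D)$ by a block-scalar in the endomorphism algebras, which is still in $\mathcal N_{\mathbb R}(\Lambda)$ provided it preserves $\Lambda$ — and that is guaranteed since we already imposed $C\mathbb Z[\zeta_3] = \mathbb Z[\zeta_3]$ and $D\Lambda^\natural = \Lambda^\natural$.

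The main obstacle I anticipate is the bookkeeping around uniqueness-up-to-scalar: the semi-projective representation $\Xi$ only pins down $[D_\varphi] \in \PGL(3,\mathbb C)\rtimes\mathbb Z_2$, not $D_\varphi$ itself, so to recover an honest matrix $D$ from a class one must choose a scalar, and the constraint $D\Lambda^\natural = \Lambda^\natural$ must be shown to cut the scalar ambiguity down correctly (it need not be rigid — several scalars may work, and that is fine, the right-hand side of the asserted equality is defined as the \emph{set} of all such $D$). Dually, one must check that this scalar freedom on the $D$-side is independent of the scalar freedom on the $C$-side (which is genuinely independent, since $\End_{\Heis(3)}(\rho_{1\mathbb R})$ and $\End_{\Heis(3)}(\rho_{3\mathbb R})$ are independent copies of $\mathbb C$) — so the two factors of the fiber product really do vary independently subject only to the shared $\mathbb Z_2$-datum. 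Once this is clearly organized, both inclusions are immediate: ``$\subseteq$'' is the splitting of $A\Lambda = \Lambda$ plus Remarks \ref{rem1} and \ref{Antilinear}, and ``$\supseteq$'' is the reconstruction of a normalizing affinity from a compatible pair $(C,D)$ as just described.
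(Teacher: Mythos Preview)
Your overall architecture---splitting $A=\diag(C,D)$ via Remark \ref{rem1}, decomposing $A\Lambda=\Lambda$ as $C\,\mathbb Z[\zeta_3]=\mathbb Z[\zeta_3]$ and $D\Lambda^\natural=\Lambda^\natural$, and then matching the two factors over a common $\mathbb Z_2$---is exactly the intended route (the paper states the corollary as an immediate consequence of Remarks \ref{rem1} and \ref{Antilinear}). But the heart of your argument contains a genuine mistake.

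You claim that $C_\varphi$ and $D_\varphi$ are ``\emph{both} $\mathbb C$-linear when $\varphi\in\Stab(\chi_1)$ and \emph{both} $\mathbb C$-antilinear otherwise,'' on the grounds that ``$\chi_{1\mathbb R}$ and $\chi_{3\mathbb R}$ share the same $\varphi$-action type.'' This is false. The linearity of $C$ is governed by whether $\varphi\in\Stab(\chi_1)$, whereas the linearity of $D$ is governed by whether $\varphi\in\Stab(\chi_3)$, and these two index-$2$ subgroups of $\Stab(\chi_{1\mathbb R})$ do \emph{not} coincide. Concretely (see Lemma \ref{TableSemi}), $\varphi_3$ lies in $\Stab(\chi_3)\setminus\Stab(\chi_1)$, so $D_{\varphi_3}$ is $\mathbb C$-linear while $C_{\varphi_3}$ is $\mathbb C$-antilinear; dually, $\varphi_3\varphi_4\in\Stab(\chi_1)\setminus\Stab(\chi_3)$. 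Consequently your interpretation of the second map as ``$[D]\in\PGL(3,\mathbb C)$ or in the nontrivial coset'' is wrong: that would record $\varphi$ modulo $\Stab(\chi_3)$, not modulo $\Stab(\chi_1)$.

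The fix is simple once you read the second homomorphism correctly. Since $\Xi$ is faithful, $D$ determines a unique $\varphi_D\in\Stab(\chi_{1\mathbb R})$, and the map sends $D$ to the image of $\varphi_D$ in $\Stab(\chi_{1\mathbb R})/\Stab(\chi_1)$. The fiber product condition then asserts that $C$ is $\mathbb C$-linear if and only if $\varphi_D\in\Stab(\chi_1)$. For the direction ``$\supseteq$'', note that $\rho_{1\mathbb R}(u)$ is a power of $B$ and any linear $C\in\mathcal D_6$ commutes with $B$ while any antilinear $C$ inverts it; since $\varphi\in\Stab(\chi_1)$ forces $\rho_{1\mathbb R}\circ\varphi=\rho_{1\mathbb R}$ and $\varphi\notin\Stab(\chi_1)$ forces $\rho_{1\mathbb R}\circ\varphi=\rho_{1\mathbb R}^{-1}$, \emph{every} $C\in\mathcal D_6$ of the prescribed linearity type satisfies $C\rho_{1\mathbb R}C^{-1}=\rho_{1\mathbb R}\circ\varphi_D$. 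Thus $(C,D)$ assembles to an element of $\mathcal N_{\mathbb R}(\Lambda)$ with no need to produce a ``$\varphi_C$'' (your attempt to do so fails: $C$ only determines a coset in $\Stab(\chi_{1\mathbb R})/\Stab(\chi_1)$, not a single automorphism).
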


Our next goal is to explicitly describe the sets $\mathcal N_{\mathbb R}(\Lambda,\Lambda')$ and $\mathcal N_{\mathbb C}(\Lambda,\Lambda')$. We begin with the case $\Lambda=\Lambda'$ and determine $\mathcal N_{\mathbb R}(\Lambda)$ by finding generators of the group 
\[
 \lbrace D  \in  \GL(3,\mathbb C) \rtimes \mathbb Z_2 ~ 
\big\vert ~ [D]  \in \im(\Xi), ~ D \Lambda^\natural = \Lambda^\natural \rbrace \leq \GL(3,\mathbb C) \rtimes \mathbb Z_2. 
\]
The starting point is the following lemma:

\begin{lemma}\label{TableSemi}
The group $\Stab(\chi_{1 \mathbb R})\leq \Aut(\Heis(3))$ is generated by  four automorphisms $\varphi_1, \ldots, \varphi_4$. These automorphisms and 
representatives $D_{\varphi_i}$ of the classes $\Xi(\varphi_i)$ are listed in the table below: 

\bigskip
\begin{center}
		\begin{tabular}{|c|c|c|c|c|}
		\hline
			$i$  & $1$  & $2$  & $3$ & $4$ \\ 
			\hline \hline &&&& \\
			$\varphi_i(g)$  & $gk$  & $g$  & $g^2$ & $g$ \\ 
			&&&& \\
				$\varphi_i(h)$  & $h$  & $g^2hk^2$  & $h^2$  & $h^2$ \\ 
				&&&&  \\
$D_{\varphi_i}$ & $\begin{pmatrix} \zeta_3 & 0 & 0 \\ 0 & \zeta_3^2 & 0 \\ 0 & 0 & 1  \end{pmatrix} $  & 
$\frac{2+\zeta_3}{3}\cdot \begin{pmatrix}  1 & \zeta_3^2 & \zeta_3^2 \\ \zeta_3^2 & 1 & \zeta_3^2 \\ \zeta_3^2 & \zeta_3^2& 1    \end{pmatrix}$  & 
$\begin{pmatrix}  1 & 0 & 0 \\ 0 & 0 & 1 \\ 0 & 1 & 0 \end{pmatrix}$ &  $ (z_2,z_3,z_4)\mapsto(\overline{z}_2, \overline{z}_3, \overline{z}_4)$ \\
			&&&& \\ 
			\hline
		\end{tabular}
	\end{center}
	
\bigskip	
The representatives $D_{\varphi_i}$ are chosen such that $D_{\varphi_i}\cdot \Lambda^\natural=\Lambda^\natural$, independent of $\Lambda = \Lambda_1$ or $\Lambda_2$.
\end{lemma}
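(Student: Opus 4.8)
The plan is to produce the group $\Stab(\chi_{1\mathbb R})$ explicitly as a subgroup of $\Aut(\Heis(3))$ and then, for each of the four listed generators, verify directly that the matrix recorded in the table does implement the corresponding automorphism on $\rho_{3\mathbb R}$ and preserves $\Lambda^\natural$. First I would recall that $\Aut(\Heis(3))$ has a well-known structure: since the center $Z(\Heis(3)) = \langle k\rangle$ is characteristic and $\Heis(3)/Z \cong \mathbb Z_3^2$, every automorphism induces an automorphism of $\mathbb Z_3^2$, giving a surjection $\Aut(\Heis(3)) \to \GL(2,\mathbb F_3)$ whose kernel consists of the central automorphisms $g \mapsto gk^i$, $h \mapsto hk^j$ (a group of order $9$); hence $|\Aut(\Heis(3))| = 9 \cdot 48 = 432$. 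The condition $\varphi \in \Stab(\chi_{1\mathbb R})$ means $\rho_{1\mathbb R}\circ\varphi \cong \rho_{1\mathbb R}$, equivalently the induced automorphism $\bar\varphi$ of $\mathbb Z_3^2 = \langle \bar g,\bar h\rangle$ fixes (up to $\pm 1$, since $B$ and $B^2=B^{-1}$ give equivalent real $2$-dimensional representations) the linear character corresponding to $h$, i.e. $\bar\varphi$ must fix the line $\langle \bar g\rangle \subset \mathbb Z_3^2$ setwise. So $\bar\varphi$ lies in the stabilizer of a line in $\GL(2,\mathbb F_3)$, a group of order $12$; pulling back along the order-$9$ kernel gives $|\Stab(\chi_{1\mathbb R})| = 108$. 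I would then check that $\varphi_1,\dots,\varphi_4$ from the table all lie in this subgroup (each clearly fixes the line $\langle \bar g\rangle$) and generate it — this is a finite computation one can do by hand or cite to the MAGMA code, e.g. by checking their images generate the order-$12$ line stabilizer and that the central automorphism $g\mapsto gk$ (which is $\varphi_1$ composed with the others appropriately) is in the group generated.

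Next, for each $i$ I would verify the equation $D_{\varphi_i}\,\rho_3(u)\,D_{\varphi_i}^{-1} = \rho_3(\varphi_i(u))$ for $u = g,h$ (and hence for all of $\Heis(3)$), using the explicit matrices $\rho_3(g),\rho_3(h)$ from Remark \ref{ComplRepHe}. For $\varphi_3$ this is the transposition of the last two coordinates, which visibly conjugates the cyclic permutation $\rho_3(g)$ to its inverse and swaps the last two diagonal entries of $\rho_3(h)$, matching $\varphi_3(g)=g^2$, $\varphi_3(h)=h^2$ up to a central factor; for $\varphi_4$ the antilinear map sends $\rho_3(g) \mapsto \overline{\rho_3(g)} = \rho_3(g)$ (real permutation matrix) and $\rho_3(h)\mapsto \overline{\rho_3(h)} = \rho_3(h^2)$, matching $\varphi_4(g) = g$, $\varphi_4(h) = h^2$; the cases $\varphi_1,\varphi_2$ are similar linear-algebra checks, with $\varphi_2$ requiring the small computation that the symmetric matrix $\tfrac{2+\zeta_3}{3}(J - (1-\zeta_3^2)\,\mathrm{Id})$-type expression (essentially a Fourier-type matrix for $\mathbb Z_3$) conjugates the permutation matrix to a diagonal one and vice versa. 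Throughout I only need $D_{\varphi_i}$ up to a nonzero scalar, since that is all $\Xi$ records.

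Finally I would verify $D_{\varphi_i}\cdot\Lambda^\natural = \Lambda^\natural$ for $\Lambda^\natural = \mathbb Z[\zeta_3]^3 + \langle(t,t,t)\rangle$ (the common value of $\Lambda_1\cap\mathbb C^3$ and $\Lambda_2\cap\mathbb C^3$, noting $(0,t,-t,0)$ has vanishing last three coordinates so $\Lambda_1^\natural = \Lambda_2^\natural$ — wait, more carefully, $\Lambda_1\cap\mathbb C^3$ means the sublattice of vectors with first coordinate zero, living in the last three coordinates, which is $\mathbb Z[\zeta_3]^3 + \langle(t,t,t)\rangle$, and for $\Lambda_2$ it additionally contains $(t,-t,0)$; I would state the correct $\Lambda^\natural$ precisely at the start). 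For each $D_{\varphi_i}$ this reduces to: (a) the matrix has entries in $\mathbb Z[\zeta_3]$ so it preserves $\mathbb Z[\zeta_3]^3$ — immediate except for $\varphi_2$, where one checks $\tfrac{2+\zeta_3}{3}$ times an integer matrix still lands in $\mathbb Z[\zeta_3]^3$ because $3 \mid (2+\zeta_3)\cdot(\text{row sums})$, using $t = (1+2\zeta_3)/3$ and $3t \in \mathbb Z[\zeta_3]$; (b) $D_{\varphi_i}$ sends $(t,t,t)$ (and $(t,-t,0)$ in the $\Lambda_2$ case) back into $\Lambda^\natural$, another direct check; and (c) $\det D_{\varphi_i}$ is a unit times the right power to conclude the map is onto $\Lambda^\natural$, or equivalently repeat (a),(b) for $D_{\varphi_i}^{-1}$. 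The main obstacle is purely bookkeeping: getting the normalizations of $D_{\varphi_2}$ and the precise generators of $\Stab(\chi_{1\mathbb R})$ right, and confirming these four automorphisms genuinely generate rather than a proper subgroup — for that last point the cleanest route is to exhibit, as words in $\varphi_1,\dots,\varphi_4$, both a generator of the order-$9$ group of central automorphisms and generators of the order-$12$ line stabilizer in $\GL(2,\mathbb F_3)$, which I would either spell out or defer to the MAGMA verification referenced in the introduction.
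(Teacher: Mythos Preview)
Your proposal is correct and follows essentially the same strategy as the paper: compute $|\Stab(\chi_{1\mathbb R})|=108$, verify that $\varphi_1,\dots,\varphi_4$ lie in this stabilizer and generate it, then check the intertwining equations and lattice preservation for each $D_{\varphi_i}$ by direct computation. The only noteworthy difference is in how the order $108$ is obtained: the paper goes through the chain $\Stab(\chi_1,\chi_3)\trianglelefteq \Stab(\chi_1)$ (orders $27$ and $54$, via the transitive action on the eight nontrivial degree-one characters) and then identifies $\varphi_3,\varphi_4$ as a Klein-four complement, whereas you compute $108=9\cdot 12$ directly from the line stabilizer in $\GL(2,\mathbb F_3)$; both routes are fine and yours is arguably more direct.

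Two small points to tighten in your write-up: (i) $D_{\varphi_2}$ does \emph{not} preserve $\mathbb Z[\zeta_3]^3$ (e.g.\ $D_{\varphi_2}e_1\notin\mathbb Z[\zeta_3]^3$), so your clause ``still lands in $\mathbb Z[\zeta_3]^3$'' should read ``lands in $\Lambda^\natural$''; the actual check is that $D_{\varphi_2}e_j+(t,t,t)\in\mathbb Z[\zeta_3]^3$. (ii) As you noticed mid-argument, $\Lambda_1^\natural\neq\Lambda_2^\natural$ (the latter contains $(t,-t,0)$), so the lattice-preservation check must be done separately for each; state this cleanly up front rather than in a parenthetical.
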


\begin{proof}
We first determine the normal subgroup $\Stab(\chi_1, \chi_3)\trianglelefteq \Stab(\chi_{1\mathbb R})$. 
Since $\Aut(\Heis(3))\simeq \AGL(2,\mathbb F_3)$ acts transitively on the eight non-trivial  complex $\Heis(3)$-characters of degree one, we have 
$\vert \Stab(\chi_1)\vert =432/8=54$. 
The automorphism $\varphi$ defined by $\varphi(g)=g^2$ and $\varphi(h)=h$ belongs to 
$\Stab(\chi_1)$. It maps  $\chi_3$ to $\overline{\chi_3}$, which is the second of the two irreducible complex characters of degree $3$. 
This implies  that  $\Stab(\chi_1, \chi_3)$  has order $27$. The elements $\varphi_1$ and $\varphi_2$ of this group have order 3 and do not commute, thus $\langle \varphi_1,\varphi_2\rangle = \Stab(\chi_1, \chi_3)$.\\
Similarly, we observe $\vert \Stab(\chi_{1\mathbb R})\vert =108=4\cdot 27$. 
This group 
contains the two involutions  $\varphi_3$ and $\varphi_4$ that commute. Therefore, these elements generate a complement of 
$ \Stab(\chi_1, \chi_3)\trianglelefteq \Stab(\chi_{1\mathbb R})$.\\
To find the representatives $D_{\varphi_i}$, we first remark that $\varphi_1,\varphi_2$ and $ \varphi_3$ belong to  $\Stab(\chi_3)$, thus $D_{\varphi_1}$, $D_{\varphi_2}$ and  $D_{\varphi_3}$ are $\mathbb C$-linear, see Remark \ref{rem1}. We compute them as  solutions of the equations
\[
D  \rho_3(u) =(\rho_3 \circ \varphi_i)(u) D, \qquad u=g, h.
\]
Here, $\rho_3$ is  the representation defined in \ref{rho3}. 
On the other hand, $\varphi_4$ exchanges $\chi_3$ and $\overline{\chi}_3$, which implies  that $D_{\varphi_4}$ is antilinear. 
It is easy to check that $D_{\varphi_4}(z_2,z_3,z_4)=(\overline{z}_2, \overline{z}_3, \overline{z}_4)$ solves the equation 
$D  \rho_{3\mathbb R}(u) =(\rho_{3\mathbb R} \circ \varphi_4)(u) D$ for $u=g$ and $h$.
\end{proof}

\noindent 
Up to now, it is not even clear that $\mathcal N_{\mathbb R}(\Lambda)$ is finite. 
The next lemma assures this property.

\begin{lemma}\label{possiblereps}
Suppose that a representative $D_{\varphi}$ of $\Xi(\varphi)$ is chosen such that 
$D_{\varphi} \Lambda^\natural=\Lambda^\natural$, then  all other representatives with this  property differ by a unit of 
$\mathbb Z[\zeta_3]$. 
\end{lemma}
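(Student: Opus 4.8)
The plan is to exploit the fact that the semi-projective representation $\Xi$ pins down each $D_\varphi$ up to a nonzero complex scalar, and then to show that the only scalars preserving the lattice $\Lambda^\natural$ are units of $\mathbb Z[\zeta_3]$. Concretely, suppose $D_\varphi$ and $D'_\varphi$ are two representatives of the same class $\Xi(\varphi) \in \PGL(3,\mathbb C) \rtimes \mathbb Z_2$, both satisfying $D_\varphi \Lambda^\natural = D'_\varphi \Lambda^\natural = \Lambda^\natural$. By Remark \ref{rem1}, since both are chosen inside $\GL(3,\mathbb C) \rtimes \mathbb Z_2$ and represent the same class in $\PGL(3,\mathbb C) \rtimes \mathbb Z_2$, they differ by a nonzero element of the endomorphism algebra $\End_{\Heis(3)}(\rho_{3\mathbb R}) \cong \mathbb C$, i.e. $D'_\varphi = c \cdot D_\varphi$ for some $c \in \mathbb C^*$ acting as scalar multiplication by $c$ (this is the same in both the $\mathbb C$-linear and $\mathbb C$-antilinear cases, since one composes with complex conjugation on the other side). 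Here I should be slightly careful: in the antilinear case, $D'_\varphi \circ D_\varphi^{-1}$ is still a $\mathbb C$-linear $\Heis(3)$-automorphism of $\rho_{3\mathbb R}$, hence scalar multiplication by some $c \in \mathbb C^*$.

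Now $D'_\varphi = c D_\varphi$ together with $D_\varphi \Lambda^\natural = \Lambda^\natural$ and $D'_\varphi \Lambda^\natural = \Lambda^\natural$ forces $c \Lambda^\natural = \Lambda^\natural$. So the whole statement reduces to: the only $c \in \mathbb C^*$ with $c \Lambda^\natural = \Lambda^\natural$ are the units of $\mathbb Z[\zeta_3]$, i.e. $\langle -\zeta_3 \rangle$. To see this, recall that $\Lambda^\natural = \Lambda \cap \mathbb C^3$ where $\Lambda = \Lambda_1$ or $\Lambda_2$; in either case $\mathbb Z[\zeta_3]^3 \subseteq \Lambda^\natural$ and $\Lambda^\natural$ is a lattice of rank $6$ lying inside $\frac{1}{3}\mathbb Z[\zeta_3]^3$ (one can read off $\Lambda^\natural$ explicitly from the last three coordinates of $(0,t,t,t)$ and $(0,t,-t,0)$, noting $t = (1+2\zeta_3)/3$). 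Multiplication by $c$ is a $\mathbb C$-linear automorphism of $\mathbb C^3$ preserving this lattice, hence also preserves the $\mathbb Z[\zeta_3]$-span structure: more elementarily, $c$ must map the finitely generated $\mathbb Z[\zeta_3]$-module generated by $\Lambda^\natural$ to itself, and this module is exactly $\mathbb Z[\zeta_3]^3$ (since $3t, 3 \in \mathbb Z[\zeta_3]$ and the entries generate $\mathbb Z[\zeta_3]$ over itself — one checks $t$ already has $\mathbb Z[\zeta_3]$-span equal to $\mathbb Z[\zeta_3]$ because $t$ is a unit times... no: rather, the $\mathbb Z[\zeta_3]$-span of the coordinates of the generators, together with $\mathbb Z[\zeta_3]^3$, is $\mathbb Z[\zeta_3]^3$). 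Therefore $c \cdot \mathbb Z[\zeta_3] = \mathbb Z[\zeta_3]$, which means $c \in \mathbb Z[\zeta_3]^* = \langle -\zeta_3 \rangle$, a group of order $6$.

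The main obstacle is the bookkeeping in the middle step: carefully checking that $c D_\varphi$ still represents the same class and that the comparison map is honest scalar multiplication on $\mathbb C^3$ (as opposed to a more general element of the endomorphism algebra), and then pinning down which $\mathbb Z[\zeta_3]$-module $\Lambda^\natural$ generates so that $c\Lambda^\natural = \Lambda^\natural$ really does imply $c\mathbb Z[\zeta_3] = \mathbb Z[\zeta_3]$. Both are routine but require writing $\Lambda^\natural$ out explicitly for $\Lambda_1$ and $\Lambda_2$ and using $t = (1+2\zeta_3)/3$, $3t \in \mathbb Z[\zeta_3]$. Once that is in place, the conclusion $c \in \mathbb Z[\zeta_3]^*$ is immediate, and in particular $\mathcal N_{\mathbb R}(\Lambda)$ is finite since there are only finitely many $\varphi \in \Stab(\chi_{1\mathbb R})$ and, for each, only finitely many ($\leq 6$) admissible representatives $D_\varphi$, hence finitely many admissible matrices $C_\varphi$ as well by Remark \ref{Antilinear}.
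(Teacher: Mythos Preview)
Your overall strategy is correct and coincides with the paper's: reduce to showing that any $c \in \mathbb C^*$ with $c\Lambda^\natural = \Lambda^\natural$ must be a unit of $\mathbb Z[\zeta_3]$. The issue is your execution of that last step. Your claim that ``the $\mathbb Z[\zeta_3]$-module generated by $\Lambda^\natural$ is exactly $\mathbb Z[\zeta_3]^3$'' is false: since $\zeta_3 t - t = \zeta_3^2 \in \mathbb Z[\zeta_3]$, the lattice $\Lambda^\natural$ is already a $\mathbb Z[\zeta_3]$-module, and it strictly contains $\mathbb Z[\zeta_3]^3$ (because $t \notin \mathbb Z[\zeta_3]$). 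So the passage from $c\Lambda^\natural = \Lambda^\natural$ to $c\,\mathbb Z[\zeta_3]^3 = \mathbb Z[\zeta_3]^3$ does not go through as written, and the parenthetical musings about the span of $t$ do not repair it.

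The paper's argument for this step is much simpler and avoids any module bookkeeping: since $(1,0,0) \in \mathbb Z[\zeta_3]^3 \subseteq \Lambda^\natural$, the relation $c\Lambda^\natural = \Lambda^\natural$ gives $(c,0,0) \in \Lambda^\natural$. But for either $\Lambda_1$ or $\Lambda_2$ one checks directly from the generators that any element of $\Lambda^\natural$ whose last two coordinates vanish has its first coordinate in $\mathbb Z[\zeta_3]$ (equivalently, $K$ contains no nonzero multiple of a unit vector, cf.\ Remark~\ref{remark-K}). Hence $c \in \mathbb Z[\zeta_3]$. Applying the same observation to $c^{-1}$ (since $c\Lambda^\natural = \Lambda^\natural$ is equivalent to $c^{-1}\Lambda^\natural = \Lambda^\natural$) gives $c^{-1} \in \mathbb Z[\zeta_3]$ as well, so $c \in \mathbb Z[\zeta_3]^*$. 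I recommend replacing your span argument with this two-line computation.
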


\begin{proof}
	Let $\mu \in \mathbb C^{\ast}$ with $\mu D_{\varphi} \Lambda^\natural = \Lambda^\natural$. Since $D_{\varphi} \Lambda^\natural=\Lambda^\natural$, 
	we have 
	$\mu \Lambda^\natural=\mu D_{\varphi} \Lambda^\natural= \Lambda^\natural$.
	In particular, $\mu \cdot (1,0,0) \in \Lambda^\natural$, which implies
	$\mu \in \mathbb Z[\zeta_3]$. On the other hand, the equation $\mu \Lambda^\natural = \Lambda^\natural$  is  equivalent to 
	$\mu^{-1} \Lambda^\natural = \Lambda^\natural$, and  we  conclude that  
	$\mu^{-1} \in \mathbb Z[\zeta_3]$. 
\end{proof}

\begin{prop}\label{Di}
The semi-linear transformations $D_{\varphi_1}, \ldots, D_{\varphi_4}$ in the table of Lemma \ref{TableSemi} generate the group 
\[
\lbrace D  \in  \GL(3,\mathbb C) \rtimes \mathbb Z_2 ~ 
\big\vert ~ [D]  \in \im(\Xi), ~ D \Lambda^\natural = \Lambda^\natural \rbrace \leq \GL(3,\mathbb C) \rtimes \mathbb Z_2. 
\] 
\end{prop}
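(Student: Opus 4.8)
The plan is to show that any $D$ in the group
\[
\mathcal{G} := \lbrace D \in \GL(3,\CC)\rtimes\ZZ_2 \mid [D]\in\im(\Xi),\ D\Lambda^\natural=\Lambda^\natural\rbrace
\]
lies in the subgroup generated by $D_{\varphi_1},\ldots,D_{\varphi_4}$. The key observation is that there is a surjective homomorphism $\mathcal{G}\to\Stab(\chi_{1\mathbb R})$ sending $D\mapsto\varphi$, where $\varphi$ is the unique automorphism with $[D]=\Xi(\varphi)$ (uniqueness is exactly the faithfulness of $\Xi$, recorded before Lemma \ref{TableSemi}). By Lemma \ref{TableSemi}, the images $\varphi_1,\ldots,\varphi_4$ of $D_{\varphi_1},\ldots,D_{\varphi_4}$ generate $\Stab(\chi_{1\mathbb R})$, and each $D_{\varphi_i}$ lies in $\mathcal{G}$ because it was chosen to preserve $\Lambda^\natural$. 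Hence, given $D\in\mathcal{G}$ with image $\varphi$, after multiplying $D$ by a suitable word in the $D_{\varphi_i}^{\pm1}$ we may assume its image $\varphi$ is trivial, i.e. $D$ lies in the kernel of $\mathcal{G}\to\Stab(\chi_{1\mathbb R})$.

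So the crux is to identify this kernel. An element $D\in\mathcal{G}$ mapping to the identity satisfies $[D]=\Xi(\id)=[\,\id\,]$ in $\PGL(3,\CC)\rtimes\ZZ_2$, which forces $D$ to be $\CC$-\emph{linear} (the $\ZZ_2$-component is trivial) and scalar: $D=\mu\cdot\id$ for some $\mu\in\CC^\ast$. The condition $D\Lambda^\natural=\Lambda^\natural$ then reads $\mu\Lambda^\natural=\Lambda^\natural$, and the argument of Lemma \ref{possiblereps} (or rather its proof: apply $\mu$ to the vector $(1,0,0)\in\Lambda^\natural$ to get $\mu\in\ZZ[\zeta_3]$, and the same for $\mu^{-1}$) shows $\mu\in\ZZ[\zeta_3]^\ast=\langle-\zeta_3\rangle$. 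Thus the kernel is exactly the group of scalars $\langle-\zeta_3\rangle\cdot\id$. It remains only to check that these scalars are already in $\langle D_{\varphi_1},\ldots,D_{\varphi_4}\rangle$: a direct computation with the matrices in the table of Lemma \ref{TableSemi}—for instance, $D_{\varphi_1}$ has determinant $1$ but $D_{\varphi_1}^3=\zeta_3\cdot\id$ up to reordering, or one exhibits $-\zeta_3\cdot\id$ as a short word in the $D_{\varphi_i}$—produces the scalar $-\zeta_3\cdot\id$, hence all of $\langle-\zeta_3\rangle\cdot\id$.

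Putting these together: given arbitrary $D\in\mathcal{G}$, reduce its image in $\Stab(\chi_{1\mathbb R})$ to the identity using the $D_{\varphi_i}$, land in the scalar kernel $\langle-\zeta_3\rangle\cdot\id$, and absorb that scalar into another word in the $D_{\varphi_i}$. This shows $D\in\langle D_{\varphi_1},\ldots,D_{\varphi_4}\rangle$ and completes the proof; combined with Lemma \ref{possiblereps} it also shows $\mathcal{G}$ is finite.

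The main obstacle I anticipate is the last bookkeeping step—verifying that the scalar $-\zeta_3\cdot\id$ (equivalently all of $\ZZ[\zeta_3]^\ast\cdot\id$) is genuinely realized by a word in $D_{\varphi_1},\ldots,D_{\varphi_4}$ rather than merely lying in $\mathcal{G}$. This is a finite but slightly delicate computation: one must track both the $\PGL(3,\CC)\rtimes\ZZ_2$-classes (to ensure the word represents the trivial automorphism) \emph{and} the exact scalar multiple that the chosen lifts $D_{\varphi_i}$ produce. Everything else—surjectivity of $\mathcal{G}\to\Stab(\chi_{1\mathbb R})$, the identification of the kernel as scalars in $\ZZ[\zeta_3]^\ast$, and the fact that the $\varphi_i$ generate the stabilizer—follows formally from Lemma \ref{TableSemi}, Lemma \ref{possiblereps}, and the faithfulness of $\Xi$.
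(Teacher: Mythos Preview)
Your exact-sequence argument is correct and is a different, more structural route than the paper's: the paper simply observes that $\langle D_{\varphi_i}\rangle\subseteq\mathcal G$, bounds $|\mathcal G|\le|\Stab(\chi_{1\mathbb R})|\cdot 6=648$ via Lemma~\ref{possiblereps}, and then invokes a MAGMA computation showing $|\langle D_{\varphi_1},\ldots,D_{\varphi_4}\rangle|=648$. Your approach trades the black-box order count for the explicit task of exhibiting the six scalars as words in the $D_{\varphi_i}$, which is cleaner conceptually but, as you anticipated, is where the real work sits.

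On that final step your proposed example is wrong: $D_{\varphi_1}=\diag(\zeta_3,\zeta_3^2,1)$ plainly satisfies $D_{\varphi_1}^3=\id$, not $\zeta_3\cdot\id$. Two words that do the job are
\[
D_{\varphi_2}^3=-\id\qquad\text{and}\qquad \bigl[D_{\varphi_1},[D_{\varphi_1},D_{\varphi_2}]\bigr]=\zeta_3^2\cdot\id,
\]
which together generate $\langle-\zeta_3\rangle\cdot\id$. For the first, write $D_{\varphi_2}=\tfrac{2+\zeta_3}{3}\bigl((1-\zeta_3^2)I+\zeta_3^2J\bigr)$ with $J$ the all-ones matrix and read off the eigenvalues $-\zeta_3,-\zeta_3^2,-\zeta_3^2$. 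For the second, a direct computation gives $[D_{\varphi_1},D_{\varphi_2}]=\zeta_3\,\rho_3(g^2)$; conjugating the permutation matrix $\rho_3(g^2)$ by the diagonal $D_{\varphi_1}$ rescales it by $\zeta_3^2$, so the iterated commutator collapses to $\zeta_3^2\cdot\id$. With these two words your proof is complete and computer-free, whereas the paper's counting argument is shorter but relies on MAGMA.
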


\begin{proof}
By construction,  $\langle D_{\varphi_1}, \ldots, D_{\varphi_4} \rangle$ is contained in
$\lbrace D  \in  \GL(3,\mathbb C) \rtimes \mathbb Z_2 ~ 
\big\vert ~ [D]  \in \im(\Xi_{\mathbb R}), ~ D \Lambda^\natural = \Lambda^\natural \rbrace$. The latter  
has at most $\vert\Stab(\chi_{1\mathbb R})\vert \cdot 6 =648$ elements, thanks to Lemma 
\ref{possiblereps}. 
On the other hand, a MAGMA computation shows that $\langle D_{\varphi_1}, \ldots, D_{\varphi_4} \rangle$ has also $648$ elements. 
Hence, both groups are equal. 
 \end{proof}

\begin{cor}\label{thenormal}
For both $\Lambda= \Lambda_1$ and $\Lambda_2$, 
the normalizers   $\mathcal N_{\mathbb R}(\Lambda)$ and $\mathcal N_{\mathbb C}(\Lambda)$ are
\[
\mathcal N_{\mathbb R}(\Lambda)=\mathcal D_6 \times_{\mathbb Z_2} \langle D_{\varphi_1}, \ldots, D_{\varphi_4} \rangle \quad \makebox{and} \quad  
\mathcal N_{\mathbb C}(\Lambda)=\langle -\zeta_3 \rangle \times \langle D_{\varphi_1}, D_{\varphi_2} \rangle.
\]
\end{cor}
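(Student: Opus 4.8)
Combining the fibre-product description of $\mathcal N_{\mathbb R}(\Lambda)$ obtained above (namely $\mathcal N_{\mathbb R}(\Lambda)=\mathcal D_6\times_{\mathbb Z_2}\{D\in\GL(3,\mathbb C)\rtimes\mathbb Z_2 : [D]\in\im(\Xi),\ D\Lambda^\natural=\Lambda^\natural\}$) with Proposition~\ref{Di}, which identifies the second factor with $\langle D_{\varphi_1},\dots,D_{\varphi_4}\rangle$, yields the first identity at once. So the only real content is the second one.

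For it, the plan is to intersect with $\GL(4,\mathbb C)$ and exploit the block decomposition from Remark~\ref{rem1}. Recall that every $A\in\mathcal N_{\mathbb R}(\Lambda)$ has the shape $A=\diag(C,D)$ and conjugates $\rho_{\mathbb R}$ into $\rho_{\mathbb R}\circ\varphi$ for a unique $\varphi\in\Stab(\chi_{1\mathbb R})$, and that $A$ is $\mathbb C$-linear precisely when $\varphi\in\Stab(\chi_1,\chi_3)=\langle\varphi_1,\varphi_2\rangle$, the last equality being established in the proof of Lemma~\ref{TableSemi}. First I would verify the inclusion $\supseteq$: a matrix acting as scalar multiplication by a unit of $\mathbb Z[\zeta_3]$ on the $\rho_1$-summand and as the identity on the $\rho_3$-summand commutes with $\im(\rho)$ and preserves $\Lambda$ (whose first coordinate sublattice is $\mathbb Z[\zeta_3]$), hence lies in $\mathcal N_{\mathbb C}(\Lambda)$; and for $j=1,2$ the matrix $\diag(I_2,D_{\varphi_j})$ is $A_{\varphi_j}$ with $C_{\varphi_j}=I_2$ (admissible because $\varphi_j\in\Stab(\chi_1)$, cf.\ Remark~\ref{Antilinear}), so it normalises $\im(\rho)$ and, by the way the $D_{\varphi_j}$ were normalised in Lemma~\ref{TableSemi}, preserves $\Lambda$. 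Both families consist of $\mathbb C$-linear maps; since $\langle-\zeta_3\rangle$ and $\langle D_{\varphi_1},D_{\varphi_2}\rangle$ act on complementary coordinate blocks, they commute and meet only in the identity, so their product is an internal direct product contained in $\mathcal N_{\mathbb C}(\Lambda)$.

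For the reverse inclusion I would take $A=\diag(C,D)\in\mathcal N_{\mathbb C}(\Lambda)$ with associated $\varphi\in\langle\varphi_1,\varphi_2\rangle$. Since $\varphi$ fixes $\chi_1$, Remark~\ref{Antilinear} forces $C$ to be scalar multiplication by a unit of $\mathbb Z[\zeta_3]$, i.e.\ an element of the $\langle-\zeta_3\rangle$-factor. As $[D]=\Xi(\varphi)\in\langle[D_{\varphi_1}],[D_{\varphi_2}]\rangle$ and $D\Lambda^\natural=\Lambda^\natural$, Lemma~\ref{possiblereps} gives $D=\mu D'$ with $D'\in\langle D_{\varphi_1},D_{\varphi_2}\rangle$ and $\mu\in\mathbb Z[\zeta_3]^\ast$, so everything reduces to the claim that each of the six unit scalar matrices $\mu I_3$ already belongs to $\langle D_{\varphi_1},D_{\varphi_2}\rangle$, equivalently that $\lvert\langle D_{\varphi_1},D_{\varphi_2}\rangle\rvert=162$. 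This is the one step that is not formal, and I expect it to be the main obstacle: it is checked by a MAGMA order computation, exactly as in the proof of Proposition~\ref{Di} (partial evidence by hand: the eigenvalues of $D_{\varphi_2}$ are $-\zeta_3,-\zeta_3^2,-\zeta_3^2$, so $D_{\varphi_2}^3=-I_3$, and the image of $\langle D_{\varphi_1},D_{\varphi_2}\rangle$ in $\PGL(3,\mathbb C)$ is the non-abelian group of order $27$). Granting it, $D\in\langle D_{\varphi_1},D_{\varphi_2}\rangle$, hence $A\in\langle-\zeta_3\rangle\times\langle D_{\varphi_1},D_{\varphi_2}\rangle$, and combining the two inclusions finishes the proof.
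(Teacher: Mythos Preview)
Your proof is correct and follows essentially the same route as the paper: the $\mathcal N_{\mathbb R}$-identity is immediate from the fibre-product description combined with Proposition~\ref{Di}, and for $\mathcal N_{\mathbb C}$ both you and the paper intersect with $\GL(4,\mathbb C)$, use that $\mathbb C$-linearity forces $\varphi\in\Stab(\chi_1,\chi_3)=\langle\varphi_1,\varphi_2\rangle$, and then invoke Lemma~\ref{possiblereps} to pin down the $D$-block. The only difference is cosmetic: where the paper writes ``in analogy to Proposition~\ref{Di}'' for the order count $\lvert\langle D_{\varphi_1},D_{\varphi_2}\rangle\rvert=162$, you unpack this explicitly and even supply the hand check $D_{\varphi_2}^3=-I_3$ (which is correct: the eigenvalues of $D_{\varphi_2}$ are $-\zeta_3,-\zeta_3^2,-\zeta_3^2$), so your version is slightly more self-contained at that point.
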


\begin{proof}
Only the claim for $\mathcal N_{\mathbb C}(\Lambda)=\mathcal N_{\mathbb R}(\Lambda) \cap \GL(4,\mathbb C)$ needs to be justified. 
According to Remark  \ref{rem1}, a matrix $A=A_{\varphi}$ belongs to $\mathcal N_{\mathbb C}(\Lambda)$ if and only if 
$\varphi \in  \Stab(\chi_1, \chi_3)$. For this reason 
\[
 \mathcal N_{\mathbb C}(\Lambda) =\langle -\zeta_3 \rangle \times \lbrace D  \in  \GL(3,\mathbb C) ~ 
\big\vert ~ [D]  \in \Xi(\Stab(\chi_1, \chi_3)), ~ D \Lambda^\natural = \Lambda^\natural \rbrace.
\]
Since  $\Stab(\chi_1, \chi_3)$ is generated by $\varphi_1$ and $\varphi_2$, the 
image $\Xi\big(\Stab(\chi_1, \chi_3)\big)$ is generated by the projective transformations $[D_{\varphi_1}]$ and $[D_{\varphi_2}]$. 
Thanks to Lemma \ref{possiblereps}, we conclude that 
\[
\lbrace D  \in  \GL(3,\mathbb C) ~ 
\big\vert ~ [D]  \in \Xi(\Stab(\chi_1, \chi_3)), ~ D \Lambda^\natural = \Lambda^\natural \rbrace = \langle D_{\varphi_1}, D_{\varphi_2} \rangle,
\]  
in analogy to Proposition \ref{Di}.
 \end{proof}

\begin{prop}\label{nothomeo}
The set  $\mathcal N_{\mathbb R}(\Lambda_1,\Lambda_2)$ is empty. In particular, hyperelliptic fourfolds with holonomy $\Heis(3)$ corresponding to different lattices are topologically distinct.
\end{prop}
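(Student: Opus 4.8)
The plan is to show that the existence of any $A \in \mathcal N_{\mathbb R}(\Lambda_1,\Lambda_2)$ contradicts the fact that $\Lambda_1$ and $\Lambda_2$ contain $\mathbb Z[\zeta_3]^4$ with different indices. By Remark \ref{rem1} and Schur's lemma, such an $A$ is block diagonal, $A = \diag(C,D)$, where $C \in \GL(2,\mathbb R)$ acts on the $\rho_{1\mathbb R}$-summand (the first coordinate) and $D \in \GL(6,\mathbb R)$ on the $\rho_{3\mathbb R}$-summand (the last three coordinates), and the conjugating automorphism lies in $\Stab(\chi_{1\mathbb R})$, so that $[D] \in \operatorname{im}(\Xi)$. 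Since the kernels $K_1, K_2$ defining $\Lambda_1, \Lambda_2$ have vanishing first coordinate, each $\Lambda_i$ splits as $\mathbb Z[\zeta_3]\, e_1 \oplus \Lambda_i^\natural$ with $e_1$ the first standard basis vector; comparing this splitting with the block form of $A$, the condition $A\Lambda_1 = \Lambda_2$ forces $D\Lambda_1^\natural = \Lambda_2^\natural$ (and $C\mathbb Z[\zeta_3] = \mathbb Z[\zeta_3]$, which we will not use).

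Next I would reduce $D$ to a scalar. The automorphisms $\varphi_1,\dots,\varphi_4$ of Lemma \ref{TableSemi} generate $\Stab(\chi_{1\mathbb R})$, so $\operatorname{im}(\Xi)$ is generated by $[D_{\varphi_1}],\dots,[D_{\varphi_4}]$; since each $D_{\varphi_i}$ stabilises $\Lambda_1^\natural$, the whole group $\langle D_{\varphi_1},\dots,D_{\varphi_4}\rangle$ does, and it surjects onto $\operatorname{im}(\Xi)$. Hence the class $[D]$ has a representative $D_\varphi$ with $D_\varphi\Lambda_1^\natural = \Lambda_1^\natural$, and as the endomorphism algebra of $\rho_{3\mathbb R}$ is $\mathbb C$ we may write $D = \mu D_\varphi$ for some $\mu \in \mathbb C^\ast$, whether $D$ is $\mathbb C$-linear or $\mathbb C$-antilinear. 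Therefore $\mu\Lambda_1^\natural = D\Lambda_1^\natural = \Lambda_2^\natural$.

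The decisive point is that multiplication by the scalar $\mu$ preserves every line through the origin, in particular $\ell := \mathbb C\cdot(1,0,0) \subset \mathbb C^3$, so $\mu\bigl(\Lambda_1^\natural \cap \ell\bigr) = \Lambda_2^\natural \cap \ell$. A short computation, using that $t$ has exact order $3$ in $E$, gives $\Lambda_1^\natural \cap \ell = \mathbb Z[\zeta_3]\cdot(1,0,0) = \Lambda_2^\natural \cap \ell$, so that $\mu\mathbb Z[\zeta_3] = \mathbb Z[\zeta_3]$; thus $\mu$ is a unit and $|\mu| = 1$. On the other hand $\mathbb Z[\zeta_3]^3$ has index $3$ in $\Lambda_1^\natural$ and index $9$ in $\Lambda_2^\natural$, so $\Lambda_2^\natural$ has one third the covolume of $\Lambda_1^\natural$, while $|\mu| = 1$ makes $\mu\Lambda_1^\natural$ have the same covolume as $\Lambda_1^\natural$ — contradicting $\mu\Lambda_1^\natural = \Lambda_2^\natural$. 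Hence $\mathcal N_{\mathbb R}(\Lambda_1,\Lambda_2) = \emptyset$. For the final assertion: by the discussion following Corollary \ref{GeometricBieber} together with Remark \ref{cocycle-rem}, a homeomorphism between the two corresponding hyperelliptic fourfolds would be induced by an affinity whose linear part lies in $\mathcal N_{\mathbb R}(\Lambda_1,\Lambda_2)$; emptiness of this set shows the two fourfolds are topologically distinct (in fact have non-isomorphic fundamental groups).

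The only step needing care is the reduction in the second paragraph — ensuring that $[D]$ admits a representative stabilising $\Lambda_1^\natural$ and that the remaining freedom is exactly a complex scalar; this is exactly what Lemma \ref{TableSemi} and Lemma \ref{possiblereps} supply. After that the argument is a routine index/covolume bookkeeping, and I do not anticipate further difficulty.
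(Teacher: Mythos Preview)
Your argument is correct and follows essentially the same route as the paper's: both reduce to writing $D = \mu D_\varphi$ with $D_\varphi$ stabilising $\Lambda_1^\natural$, then use the intersection with the line $\mathbb C\cdot(1,0,0)$ to force $\mu \in \mathbb Z[\zeta_3]^*$. The only cosmetic difference is the final contradiction---the paper concludes directly that $D\Lambda_1^\natural = \Lambda_1^\natural \subsetneq \Lambda_2^\natural$, whereas you phrase it as a covolume mismatch; these are equivalent.
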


\begin{proof}
If $\mathcal N_{\mathbb R}(\Lambda_1,\Lambda_2)$ is not empty, let $A = \diag(C,D) \in 	\mathcal N_{\mathbb R}(\Lambda_1,\Lambda_2)$, so that $A \cdot \Lambda_1 = \Lambda_2$.

It holds $D \cdot \Lambda_1^\natural =\Lambda_2^\natural$. Since 
the class of  $D$ belongs to $\im(\Xi) \leq \PGL(3,\mathbb C) \rtimes \mathbb Z_2 $, there is a constant 
$\lambda \in \mathbb C^{\ast}$ such that $\lambda D \in \langle D_{\varphi_1}, \ldots, D_{\varphi_4} \rangle$.  This implies $\lambda \Lambda_2^\natural=\lambda D \Lambda_1^\natural = \Lambda_1^\natural$ and in particular, 
$\lambda e_1 \in \Lambda_1^\natural$, which shows that $\lambda \in \mathbb Z[\zeta_3]$.  
On the other hand, we have  $\lambda^{-1} \Lambda_1^\natural =\Lambda_2^\natural$, which implies  $\lambda^{-1} \in \mathbb Z[\zeta_3]$, and we conclude that  $\lambda$
is a unit of $\mathbb Z[\zeta_3]$. Hence, $D$ itself is an  element in $\langle D_{\varphi_1}, \ldots, D_{\varphi_4} \rangle$. This leads to the  contradiction  $D \Lambda_1^\natural =\Lambda_1^\natural \subsetneq \Lambda_2^\natural$. 
\end{proof}

In order to decide whether two given translation parts corresponding to free actions are in the same $\mathcal N_{\mathbb C}(\Lambda)$- or $\mathcal N_{\mathbb R}(\Lambda)$-orbit, we determine the possible $d$'s, which define the coboundaries  in Condition (2) of Remark \ref{Con1and2} (see also Remark \ref{cocycle-rem} (2)). Here, we use the notation $d = (d_1, \ldots, d_4) \in \CC^4 \simeq \RR^8$, as in Remark \ref{FundBie} \ref{FundBie1}.

\begin{prop} \label{TParts-Heis3}
Let $X$ and $X'$ be quotients of $T= \mathbb C^4/\Lambda$ with respect to the rigid and free actions $\Phi, \Phi' \colon \Heis(3) \to \Aut(T)$, where $\Lambda = \Lambda_1$ or $\Lambda_2$ and  $\Lambda^\natural := \Lambda \cap (\lbrace 0 \rbrace \times \mathbb C^3)$. Suppose that $f \colon X \to X'$ is a \emph{diffeomorphism} induced by the affinity $\alpha(x) = Ax+d$. Then:
\begin{enumerate}
	\item The element $(d_2,d_3,d_4)$ is contained in the kernel of $(\zeta_3-1) \colon \mathbb C^3/\Lambda^\natural \to \mathbb C^3/\Lambda^\natural$.
	\item The first coordinate $d_1$ of $d$ is contained in the kernel of the map $3(\zeta_3-1) \colon E \to E$.
\end{enumerate}
\end{prop}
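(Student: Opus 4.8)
The plan is to extract the two divisibility constraints by evaluating the cocycle relation of Remark \ref{Con1and2} (2), equivalently Remark \ref{cocycle-rem} (2), on two carefully chosen elements of $\Heis(3)$. Concretely, the diffeomorphism $f$ produces an automorphism $\varphi_A$ of $\Heis(3)$ with $A\rho_{\mathbb R}A^{-1}=\rho_{\mathbb R}\circ\varphi_A$ and
\[
\bigl(\rho(\varphi_A(u))-I\bigr)\,d \;=\; A\,\tau(u)\;-\;\tau'\bigl(\varphi_A(u)\bigr)\qquad\text{in }T\quad(u\in\Heis(3)),
\]
where $\rho=\rho'=\rho_1\oplus\rho_3$ is the representation of Remark \ref{analyticreps} and $T=T'=\mathbb C^4/\Lambda$. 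I would first record two structural facts. Since the kernels $K_1,K_2$ have vanishing first coordinate, both lattices split as $\Lambda=\mathbb Z[\zeta_3]\oplus\Lambda^\natural$, hence $T=E\times T^\natural$ with $T^\natural=\mathbb C^3/\Lambda^\natural$; and by Remark \ref{rem1} the matrix $A$ is block diagonal, $A=\diag(C,D)$, with $C$ acting on the $E$-factor and $D$ on $T^\natural$, so $A$ carries vectors with vanishing last three (resp.\ first) coordinates to vectors of the same shape. I also use $\rho(h)=\diag(\zeta_3,1,\zeta_3^2,\zeta_3)$, $\rho(k)=\diag(1,\zeta_3,\zeta_3,\zeta_3)$ from the proof of Proposition \ref{isog}, and that $\tau,\tau'$ are in standard form, so $\tau(h)=(0,b_2,b_3,b_4)$, $\tau(k)=(c_1,0,0,0)$, and likewise for $\tau'$.

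For part (1): the centre $\langle k\rangle$ is characteristic, so $\varphi_A^{-1}(k)\in\{k,k^2\}$; I would substitute $u:=\varphi_A^{-1}(k)$, for which $\varphi_A(u)=k$. Then $\tau(u)\in\{\tau(k),\tau(k^2)\}$ and $\tau'(k)$ have vanishing last three coordinates, hence so does $A\tau(u)$, so the right-hand side of the relation has trivial $T^\natural$-component. Since $\rho(k)-I=\diag(0,\zeta_3-1,\zeta_3-1,\zeta_3-1)$, the left-hand side has $T^\natural$-component $(\zeta_3-1)(d_2,d_3,d_4)$. Comparing $T^\natural$-components yields $(\zeta_3-1)(d_2,d_3,d_4)=0$ in $\mathbb C^3/\Lambda^\natural$, which is assertion (1).

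For part (2): I would substitute $u=h$ and read off the first coordinate. As $\tau(h)=(0,b_2,b_3,b_4)$ and $A$ is block diagonal, the first coordinate of $A\tau(h)$ vanishes, so the first coordinate of the relation becomes
\[
\bigl(\chi_1(\varphi_A(h))-1\bigr)\,d_1 \;=\; -\,q\bigl(\varphi_A(h)\bigr)\qquad\text{in }E,
\]
where $q\colon\Heis(3)\to E$ denotes the first coordinate of $\tau'$ and $\chi_1(w)$ is the $(1,1)$-entry of $\rho(w)$. Because $\varphi_A\in\Stab(\chi_{1\mathbb R})$ permutes $\{\chi_1,\overline{\chi_1}\}$, we have $\chi_1(\varphi_A(h))\in\{\zeta_3,\zeta_3^2\}$, so $\chi_1(\varphi_A(h))-1=(\zeta_3-1)\varepsilon$ with $\varepsilon\in\mathbb Z[\zeta_3]^\ast$. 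The key point, which I would isolate as a small lemma, is that $q$ is a $1$-cocycle for the action of $\Heis(3)$ on $E$ through the scalar character $\chi_1$ — apply the first-coordinate projection, which is equivariant for this action, to the cocycle identity of $\tau'$ — with generator values $q(g)=a_1'$, $q(h)=0$ and $q(k)=(1-\zeta_3)a_1'$. Since $3a_1'=0$ by the freeness of $\Phi'$ (Lemma \ref{elementary-properties} (a)) and $E[3]$ is a $\mathbb Z[\zeta_3]$-submodule of $E$ stable under the cocycle operations, $q$ takes all its values in $E[3]$. Multiplying the displayed first-coordinate relation by $3$ then gives $3(\zeta_3-1)\varepsilon\,d_1=-3\,q(\varphi_A(h))=0$ in $E$, and cancelling the unit $\varepsilon$ yields $3(\zeta_3-1)\,d_1=0$, i.e.\ $d_1\in\ker\bigl(3(\zeta_3-1)\colon E\to E\bigr)$.

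I do not expect a serious obstacle: once the splitting $T=E\times T^\natural$ and the block shape of $A$ are in place, the first assertion is immediate and the second reduces to the single nontrivial step that the first coordinate of $\tau'$ is $3$-torsion valued. That step is where the freeness of $\Phi'$ genuinely enters (through $3a_1'=0$, equivalently the standard-form normalisations of Remark \ref{ActHeis3}); everything else is formal manipulation with the cocycle identity and the factorisation $\zeta_3^{\,b}-1=(\zeta_3-1)\cdot(\text{unit})$ in $\mathbb Z[\zeta_3]$.
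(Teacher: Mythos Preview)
Your argument is correct and follows essentially the same approach as the paper: evaluate condition (2) of Remark \ref{Con1and2} at $k$ (using that the centre is characteristic) for part (1), and at $h$ for part (2), then invoke Lemma \ref{elementary-properties} for the $3$-torsion. The only cosmetic differences are that the paper substitutes $u=\varphi^{-1}(h)$ rather than $u=h$ in part (2), so the left-hand side is directly $(\zeta_3-1)d_1$ and no unit $\varepsilon$ needs to be cancelled; and your explicit cocycle argument showing that the first coordinate of $\tau'$ lands in $E[3]$ spells out what the paper compresses into the phrase ``the right hand side is $3$-torsion by Lemma \ref{elementary-properties}''.
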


\begin{proof}
(1) Since the center $Z(\Heis(3))=\langle k \rangle $ is characteristic, we have $\varphi(k)=k$ or $k^2$ for all automorphisms $\varphi$ of $\Heis(3)$.
By definition, $\Phi'(k^j)$ acts on the last three coordinates by multiplication by $\zeta_3^j$. Spelling out condition (2) of Remark \ref{Con1and2} for $u = k$ thus yields 
\[
(\zeta_3^j-1)(d_2,d_3,d_4)=0 \in \mathbb C^3/\Lambda^\natural. 
\]
The statement follows, because $\ker(\zeta_3-1)= \ker(\zeta_3^2-1)$.

(2) We use Remark \ref{Con1and2} (2) for $u=\varphi^{-1}(h)$. Since the right-hand side is $3$-torsion by Lemma \ref{elementary-properties}, we conclude
$3(\zeta_3-1)d_1  \in \mathbb Z[\zeta_3]$. 
\end{proof}

\begin{prop} \label{Heis-Class}
There are precisely four biholomorphism classes of rigid hyperelliptic fourfolds with holonomy $\Heis(3)$. They are topologically distinct. 
\end{prop}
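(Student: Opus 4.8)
The plan is to combine the structural reductions established above with an explicit, computer-assisted orbit computation. By Proposition \ref{He3-rigid} every hyperelliptic fourfold with holonomy $\Heis(3)$ is rigid; by Proposition \ref{isog}, Proposition \ref{kernel-prop} and Proposition \ref{propExcludeK} it is of the form $E^4/K$ with $K=K_1=\langle(0,t,t,t)\rangle$ or $K=K_2=\langle(0,t,t,t),(0,t,-t,0)\rangle$; and after applying an automorphism of $\Heis(3)$ its analytic representation is the one fixed in Remark \ref{analyticreps}. Writing $\Lambda_i:=\mathbb Z[\zeta_3]^4+K_i$ and $T_i:=\mathbb C^4/\Lambda_i$, Remark \ref{SeqRem} and Remark \ref{cocycle-rem} tell us that, with $\rho$ fixed, the biholomorphism class of $E^4/K_i$ is determined by the class of its translation cocycle in $H^1(\Heis(3),T_i)$, and that $E^4/K_i$ and $E^4/K_j$ are biholomorphic exactly when some $A\in\mathcal N_{\mathbb C}(\Lambda_i,\Lambda_j)$ carries one special class to the other; likewise the diffeomorphism classes correspond to $\mathcal N_{\mathbb R}$-orbits, which by Remark \ref{SeqRem} and Corollary \ref{GeometricBieber} are the same as the isomorphism classes of the fundamental groups $\Gamma_i=\pi_1(E^4/K_i)$.

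For the cross-lattice comparison nothing further is needed: Proposition \ref{nothomeo} gives $\mathcal N_{\mathbb R}(\Lambda_1,\Lambda_2)=\emptyset$, hence also $\mathcal N_{\mathbb C}(\Lambda_1,\Lambda_2)=\emptyset$, so a fourfold over $\Lambda_1$ is neither biholomorphic nor homeomorphic to one over $\Lambda_2$. It therefore remains, for each $i\in\{1,2\}$ separately, to count the orbits of $\mathcal N_{\mathbb C}(\Lambda_i)$ and of $\mathcal N_{\mathbb R}(\Lambda_i)$ on the set of special cohomology classes in $H^1(\Heis(3),T_i)$; by Remark \ref{ActHeis3} this set has exactly four elements, represented by the translation parts of the $108$, resp. $324$, rigid free actions in standard form found there (many of which coincide on $T_i$).

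The core of the argument is this orbit computation, carried out in MAGMA. The groups $\mathcal N_{\mathbb C}(\Lambda_i)=\langle-\zeta_3\rangle\times\langle D_{\varphi_1},D_{\varphi_2}\rangle$ and $\mathcal N_{\mathbb R}(\Lambda_i)=\mathcal D_6\times_{\mathbb Z_2}\langle D_{\varphi_1},\ldots,D_{\varphi_4}\rangle$ are the finite groups determined in Corollary \ref{thenormal}, acting on a class $[\tau]$ by $A\ast\tau(u)=A\tau(\varphi_A^{-1}(u))$ as in Remark \ref{cocycle-rem}(3). Deciding whether $A\ast\tau$ and $\tau'$ represent the same class in $H^1$ means solving the equation of Remark \ref{Con1and2}(2) for a coboundary $d$, and Proposition \ref{TParts-Heis3} confines $d$ to a finite set: its last three coordinates to the finite kernel of $(\zeta_3-1)$ on $\mathbb C^3/\Lambda_i^\natural$, and its first to $\ker(3(\zeta_3-1)\colon E\to E)\cong\mathbb Z_3\times\mathbb Z_9$ (Lemma \ref{9-torsion-span}(a)), so the search terminates. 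The computation returns exactly two $\mathcal N_{\mathbb C}(\Lambda_i)$-orbits for each $i$, and these two orbits remain distinct under the larger group $\mathcal N_{\mathbb R}(\Lambda_i)$. Hence there are $2+2=4$ biholomorphism classes, which by the $\mathcal N_{\mathbb R}$-orbit count together with Proposition \ref{nothomeo} are pairwise topologically distinct (equivalently, have pairwise non-isomorphic fundamental groups); explicit representatives of the eight special classes, two per lattice, are the translation parts $\tau_1,\tau_2$ recorded in Theorem \ref{Mani}(2).

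I expect the main obstacle to be organizational rather than conceptual: one must implement the cohomology $H^1(\Heis(3),T_i)$ (equivalently $H^2(\Heis(3),\Lambda_i)$, via Remark \ref{SeqRem}) over the relevant finitely generated abelian groups correctly, and — crucially — make sure the coboundary search permitted by Proposition \ref{TParts-Heis3} really exhausts each cohomology class, so that "different $\mathcal N_{\mathbb C}$-orbits" genuinely means "non-biholomorphic". Once this bookkeeping is in place, the explicit normalizers of Corollary \ref{thenormal} and the torsion bounds on $d$ reduce the verification to a routine finite search.
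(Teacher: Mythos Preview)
Your proposal is correct and follows essentially the same approach as the paper's own proof: reduce to the two lattices $\Lambda_1,\Lambda_2$, invoke Proposition \ref{nothomeo} to separate them topologically, and for each $\Lambda_i$ run a finite MAGMA search over the normalizers of Corollary \ref{thenormal} with coboundaries constrained by Proposition \ref{TParts-Heis3}, finding two $\mathcal N_{\mathbb C}(\Lambda_i)$-orbits which remain distinct under $\mathcal N_{\mathbb R}(\Lambda_i)$. One small wording slip near the end: there are four special classes per lattice (eight total), while the $\tau_1,\tau_2$ of Theorem \ref{Mani}(2) are representatives of the two \emph{orbits} per lattice, not of all eight classes.
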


\begin{proof} 
We use MAGMA to determine for each lattice 
$\Lambda=\Lambda_1$ and  $\Lambda_2$  the set of all possible  translation parts $\tau$ of free  actions  in standard form, as described in Remark \ref{ActHeis3}. They represent the special cohomology classes in 
$H^1(\Heis(3), \mathbb C^4/\Lambda)$. For every pair of special cocycles $\tau, \tau'$, we check whether there is a matrix $A \in \mathcal N_{ \mathbb C}(\Lambda)$ and a vector $d$ according to Proposition \ref{TParts-Heis3} such that 
\begin{align*}
	(\rho(u) - I_4)d = A \ast \tau(u) - \tau'(u) 
\end{align*}
for $u = g,h$. Our code finds two orbits for each of the two lattices $\Lambda_1$ and $\Lambda_2$. This computation and Proposition  \ref{nothomeo} imply that there are exactly four biholomorphism classes and at least two different topological types. We claim that the two biholomorphism classes obtained for each lattice are topologically distinct. To verify the claim, we check with MAGMA that the two respective cohomology classes in $H^1(\Heis(3), \mathbb C^4/\Lambda_i)$ belong to different $\mathcal N_{\mathbb R}(\Lambda_i)$-orbits. 
\end{proof}

\subsection{The Case $\mathbb Z_3^2$ }

\begin{prop}
Let $K$ be one of the twelve kernels from Remark  \ref{13Shaolin}, then, 
\[
\mathcal N_{\mathbb C}(\Lambda_K) = \lbrace A \in N_{\Aut_0(E^4)}(\mathbb Z_3^2) ~\big\vert ~AK=K\rbrace.  
\]
\end{prop}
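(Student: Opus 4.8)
The plan is to prove the two inclusions separately. The inclusion ``$\supseteq$'' is essentially formal: if $A \in N_{\Aut_0(E^4)}(\mathbb Z_3^2)$ satisfies $AK = K$, then $A$ descends to an automorphism $\bar A$ of the torus $T_K = E^4/K$, and since $A$ normalizes $\im(\rho) = \rho(\mathbb Z_3^2)$ (acting on $E^4$), the induced map $\bar A$ normalizes $\im(\rho)$ acting on $T_K$. Moreover $A$ is $\mathbb C$-linear by definition of $N_{\Aut_0(E^4)}$, hence $\bar A \in \GL(4,\mathbb C)$, so $\bar A \in \mathcal N_{\mathbb C}(\Lambda_K)$ by the description in Remark \ref{cocycle-rem}(3). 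Here one just needs to check that $A$ indeed maps $\Lambda_K = \mathbb Z[\zeta_3]^4 + K$ to itself: this follows because $A$ preserves $\mathbb Z[\zeta_3]^4$ (shown in the proof of Proposition \ref{Z3^2-normalizer}, as the entries of $A$ and $A^{-1}$ are units in $\mathbb Z[\zeta_3]$) and $AK = K$ by hypothesis.

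For the inclusion ``$\subseteq$'', the key point is already contained in the proof of Proposition \ref{Z3^2-normalizer} and Corollary \ref{NormZ3hol}: any element $A \in \mathcal N_{\mathbb C}(\Lambda_K)$ is a $\mathbb C$-linear automorphism of $\mathbb C^4$ with $A \cdot \Lambda_K = \Lambda_K$ and $A \cdot \im(\rho) = \im(\rho) \cdot A$. The argument there shows that $A$ (up to the shape forced by normalizing $\rho(\mathbb Z_3^2)$, i.e.\ one of the three block forms) has entries that are Eisenstein integers, and likewise for $A^{-1}$, so that $A \cdot \mathbb Z[\zeta_3]^4 = \mathbb Z[\zeta_3]^4$; in particular $A \in N_{\Aut_0(E^4)}(\mathbb Z_3^2)$. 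It then remains to see that $A \cdot K = K$. First I would note that $K$ is intrinsic to the lattice $\Lambda_K$: by Proposition \ref{prop-structure-K} and Remark \ref{remark-K}, $K$ is precisely the image of the natural map $\Lambda_K \to (\mathbb C^4/\mathbb Z[\zeta_3]^4) = E^4$, i.e.\ $K = \Lambda_K / \mathbb Z[\zeta_3]^4$ viewed inside $E^4$. Since $A$ preserves both $\Lambda_K$ and $\mathbb Z[\zeta_3]^4$, it induces an automorphism of the quotient $\Lambda_K/\mathbb Z[\zeta_3]^4$, which is exactly the statement $A \cdot K = K$.

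The main (minor) obstacle is making precise the identification $K \simeq \Lambda_K/\mathbb Z[\zeta_3]^4$ as subgroups of $E^4$ and checking that the reduction mod $\mathbb Z[\zeta_3]^4$ of $A$ agrees with the action of $A$ on $E^4 = \mathbb C^4/\mathbb Z[\zeta_3]^4$; both are immediate once one writes $\Lambda_K = \mathbb Z[\zeta_3]^4 + \widetilde K$ for a set of lifts $\widetilde K \subset \mathbb C^4$ of $K \subset \Fix_E(\zeta_3)^4$. One should also remark that $\mathbb Z[\zeta_3]^4$ is characteristic inside $\Lambda_K$ in the relevant sense — it is the preimage of $0$ under the addition map $\Lambda_K \to E^4$, or equivalently the unique sublattice with $\Lambda_K/\mathbb Z[\zeta_3]^4 \cong K$ torsion of this particular isomorphism type — so that any $A$ normalizing the situation must preserve it; but since we already know directly (from the argument of Proposition \ref{Z3^2-normalizer}) that $A$ preserves $\mathbb Z[\zeta_3]^4$, no extra work is needed. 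Combining the two inclusions yields the claimed equality.
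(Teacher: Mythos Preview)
Your proposal is correct and follows essentially the same approach as the paper: the inclusion $\supseteq$ is declared ``clear'' there, and for $\subseteq$ the paper invokes the proof of Proposition~\ref{Z3^2-normalizer} to conclude that any $A \in \mathcal N_{\mathbb C}(\Lambda_K)$ lifts to an automorphism of $E^4$ (equivalently, preserves $\mathbb Z[\zeta_3]^4$) and hence stabilizes $K$. Your write-up simply makes explicit the step $K = \Lambda_K/\mathbb Z[\zeta_3]^4$ that the paper leaves implicit.
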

\begin{proof}
As we have seen in the proof of Proposition \ref{Z3^2-normalizer}, each 
$A \in  N_{\Aut_0(E^4/K)}(\mathbb Z_3^2) $ also defines  an automorphism of $E^4$ and therefore stabilizes  $K$. 
The other inclusion is clear. 
\end{proof}

We determine the possible translation parts of potential biholomorphisms, mimicking the statement in the Heisenberg case (Proposition \ref{TParts-Heis3}).

\begin{prop} \label{TParts-Z3^2}
Let $X$ and $X'$ be quotients of $T= E^4/K$ with respect to the rigid and free actions $\Phi, \Phi' \colon \ZZ_3^2 \to \Aut(T)$ in standard form. Suppose that $f \colon X \to X'$ is a biholomorphism induced by the affinity $\alpha(x) = Ax+d$. Then:
\begin{enumerate}
	\item The element $(d_2,d_3,d_4)$ is contained in the kernel of $(\zeta_3-1) \colon \mathbb C^3/p(\Lambda_K) \to \mathbb C^3/p(\Lambda_K)$, where $p \colon \mathbb C^4 \to \mathbb C^3$ is the projection onto the last three coordinates. 
	\item The first coordinate $d_1$ of $d$ is contained in the kernel of the map $3(\zeta_3-1) \colon E \to E$.
\end{enumerate}
\end{prop}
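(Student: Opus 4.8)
The statement is the exact analogue, for holonomy $\mathbb Z_3^2$, of Proposition \ref{TParts-Heis3}, and the plan is to follow that proof almost verbatim. By Corollary \ref{GeometricBieber}(4) the biholomorphism $f$ lifts to a $\mathbb C$-linear affinity $\alpha(x)=Ax+d$ with $A\in\mathcal N_{\mathbb C}(\Lambda_K)$; since $\Phi$ and $\Phi'$ are in standard form they share the linear part $\rho=\rho'$ (the representation of Remark \ref{analyticreps}), so Remark \ref{cocycle-rem}(2) yields an automorphism $\varphi=\varphi_A$ of $\mathbb Z_3^2$ with $A\rho A^{-1}=\rho\circ\varphi$ and
\[
(\rho(u)-I_4)\,d \;=\; A\,\tau\bigl(\varphi^{-1}(u)\bigr)-\tau'(u)\qquad\text{in }T,\qquad\text{for all }u\in\mathbb Z_3^2.
\]
First I would collect the ingredients that feed into this identity: from the proof of Proposition \ref{Z3^2-normalizer}, $\varphi$ fixes $k$ (hence $\varphi^{-1}(k)=k$) and $A$ is one of the three block matrices listed there, in each case with $Ae_1=a_1e_1$ for a unit $a_1\in\mathbb Z[\zeta_3]^\ast$, so the first coordinate axis is $A$-stable; and from Remark \ref{heis-action}, in standard form $\tau(k)=(c_1,0,0,0)$, $\tau(h)=(0,b_2,b_3,b_4)$ (likewise for $\tau'$), whence by the cocycle relation the first coordinate of $\tau(u)$ lies in $\mathbb Z[\zeta_3]\cdot c_1$ for \emph{every} $u\in\mathbb Z_3^2$.

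For (1) I would substitute $u=k$: the right-hand side becomes $A\tau(k)-\tau'(k)$, which is supported on the first coordinate by $Ae_1=a_1e_1$, so applying the projection $p$ onto the last three coordinates — on which $\rho(k)-I_4$ acts as multiplication by $\zeta_3-1$ — annihilates it and gives $(\zeta_3-1)(d_2,d_3,d_4)=0$ in $\mathbb C^3/p(\Lambda_K)$, which is assertion (1). For (2) I would substitute $u=h$ and read off the first coordinate: there $\rho(h)-I_4$ acts as $\zeta_3-1$, the first coordinate of $\tau'(h)$ is $0$, and the first coordinate of $\tau(\varphi^{-1}(h))$ lies in $\mathbb Z[\zeta_3]\cdot c_1$, hence is $3$-torsion in $E$ by Corollary \ref{cor-c1-b2-ord3}(a) together with Assumption \ref{free-assump}; multiplication by the unit $a_1$ preserves $3$-torsion. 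So $(\zeta_3-1)d_1\equiv w\pmod{p_1(\Lambda_K)}$ for some $w\in\mathbb C$ with $3w\in\mathbb Z[\zeta_3]$, where $p_1$ is the first-coordinate projection, and multiplying by $3$ yields $3(\zeta_3-1)d_1\in\mathbb Z[\zeta_3]$, i.e.\ $d_1\in\ker\bigl(3(\zeta_3-1)\colon E\to E\bigr)$.

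All the computations are elementary. The one spot I expect to require a little care — the mild additional obstacle compared with the Heisenberg case — is the final reduction in (2): for several of the kernels $K$ one has $p_1(\Lambda_K)=\mathbb Z[\zeta_3]+p_1(K)\supsetneq\mathbb Z[\zeta_3]$, whereas for $\Lambda_1,\Lambda_2$ it equals $\mathbb Z[\zeta_3]$, so one must check that passing to $\mathbb C/p_1(\Lambda_K)$ does not weaken the conclusion to merely $9(\zeta_3-1)d_1\in\mathbb Z[\zeta_3]$. This works because every element of $p_1(K)\subseteq\Fix_E(\zeta_3)$ is already killed by $3$ (explicitly $3t=1+2\zeta_3\in\mathbb Z[\zeta_3]$), so $3\,p_1(\Lambda_K)\subseteq\mathbb Z[\zeta_3]$ and the extra factor disappears.
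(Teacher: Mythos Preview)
Your proposal is correct and follows essentially the same route as the paper, which simply refers back to the argument for Proposition \ref{TParts-Heis3} after noting (via the proof of Proposition \ref{Z3^2-normalizer}) that $\varphi(k)=k$. Your write-up is in fact more careful than the paper's on one point: in the $\Heis(3)$ case the first-coordinate projection of the lattice is always $\mathbb Z[\zeta_3]$, whereas for several of the $\mathbb Z_3^2$-kernels one has $p_1(\Lambda_K)\supsetneq\mathbb Z[\zeta_3]$; you correctly observe that $3\,p_1(K)\subseteq\mathbb Z[\zeta_3]$ absorbs this discrepancy, a detail the paper leaves implicit in its ``similar to'' clause.
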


\begin{proof}
(1) The proof of Proposition \ref{Z3^2-normalizer} shows that $\varphi(k) = k$, where $\varphi$ is the unique automorphism such that $A \rho A^{-1} = \rho \circ \varphi$ (see Remark \ref{cocycle-rem} (3)). We can thus argue as in the proof of Proposition \ref{TParts-Heis3}.\\
(2) This is similar to the proof of Proposition \ref{TParts-Heis3} (2).
\end{proof}

\begin{prop}\label{oneBihol}
For each of the twelve kernels $K_i$  from Remark  \ref{13Shaolin}, there exists one and only one 
biholomorphism class of a rigid hyperelliptic fourfold $X_i$. 
\end{prop}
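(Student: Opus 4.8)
The plan is to repeat, for $G = \mathbb{Z}_3^2$, the scheme used in the proof of Proposition \ref{Heis-Class}. Recall from Remark \ref{13Shaolin} that for each of the twelve kernels $K_i$ there is at least one rigid free action $\Phi \colon \mathbb{Z}_3^2 \hookrightarrow \Aut(T_i)$ in standard form on $T_i = E^4/K_i$, and that all such actions (finitely many, counted in the table of Remark \ref{13Shaolin}) represent the special cohomology classes of $H^1(\mathbb{Z}_3^2, T_i)$. By Remark \ref{SeqRem} and Remark \ref{cocycle-rem}, two quotients $X = T_i/\Phi(\mathbb{Z}_3^2)$ and $X' = T_i/\Phi'(\mathbb{Z}_3^2)$ with translation parts $\tau, \tau'$ are biholomorphic if and only if there is a matrix $A \in \mathcal{N}_{\mathbb{C}}(\Lambda_{K_i})$ with $A \ast \tau$ and $\tau'$ cohomologous, i.e. with $(\rho(u) - I_4)d = A \ast \tau(u) - \tau'(u)$ for $u = h, k$ and some vector $d$; here $A \ast \tau(u) = A\tau(\varphi_A^{-1}(u))$ and $\varphi_A \in \Aut(\mathbb{Z}_3^2)$ is the automorphism associated to $A$ as in Remark \ref{cocycle-rem} (3). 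So the first point is to observe that the biholomorphism classes of rigid hyperelliptic fourfolds of the form $T_i/\mathbb{Z}_3^2$ are precisely the orbits of $\mathcal{N}_{\mathbb{C}}(\Lambda_{K_i})$ on the set of special cohomology classes.

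Next I would note that, by the preceding proposition, $\mathcal{N}_{\mathbb{C}}(\Lambda_{K_i})$ is the stabilizer of $K_i$ inside the finite group $N_{\Aut_0(E^4)}(\mathbb{Z}_3^2)$ of order $3888$ generated explicitly in Corollary \ref{NormZ3hol}; in particular the orbit problem is finite. To keep the search efficient, I would, exactly as in the proof of Proposition \ref{Heis-Class}, restrict the candidate coboundaries $d = (d_1, d_2, d_3, d_4)$ by Proposition \ref{TParts-Z3^2}: the tail $(d_2, d_3, d_4)$ must lie in $\ker\bigl((\zeta_3 - 1)\colon \mathbb{C}^3/p(\Lambda_{K_i}) \to \mathbb{C}^3/p(\Lambda_{K_i})\bigr)$, while $d_1 \in \ker\bigl(3(\zeta_3 - 1)\colon E \to E\bigr)$, which by Lemma \ref{9-torsion-span} is the group $\mathbb{Z}_3 \times \mathbb{Z}_9$ spanned by $1/3$ and $t/3$. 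This leaves only finitely many $d$ to test.

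With these reductions, the final step is to run the MAGMA implementation (see the URL in the introduction): for each $K_i$ and each pair $(\tau, \tau')$ of the listed special cocycles, it searches for $A \in \mathcal{N}_{\mathbb{C}}(\Lambda_{K_i})$ and $d$ as above solving $(\rho(u) - I_4)d = A \ast \tau(u) - \tau'(u)$ for $u = h, k$. I expect the computation to return a single orbit for every $K_i$; together with the existence of a rigid free action on each $T_i$ (Remark \ref{13Shaolin}), this gives exactly one biholomorphism class $X_i$ per kernel. (Pairwise non-biholomorphy of the twelve $X_i$ is already ensured by Remark \ref{13Shaolin}, as the $K_i$ lie in distinct orbits of $N_{\Aut_0(E^4)}(\mathbb{Z}_3^2)$ on $\mathcal{K}$.) The main obstacle is purely computational: encoding the twisted $\mathcal{N}_{\mathbb{C}}(\Lambda_{K_i})$-action on $H^1(\mathbb{Z}_3^2, T_i)$ together with the restricted coboundary set so that the orbit computation is feasible; there is no conceptual difficulty beyond the one already dealt with in the $\Heis(3)$ case.
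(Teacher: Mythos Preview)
Your proposal is correct and follows essentially the same approach as the paper: both reduce the question to verifying, via MAGMA, that $\mathcal N_{\mathbb C}(\Lambda_{K_i})$ acts transitively on the special cohomology classes in $H^1(\mathbb Z_3^2, E^4/K_i)$ for each of the twelve kernels, using Proposition \ref{TParts-Z3^2} to bound the coboundaries $d$. The paper's proof is terser but identical in substance.
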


\begin{proof} 
In analogy to Proposition \ref{Heis-Class}, we use MAGMA to verify that $\mathcal N_{\mathbb C}(\Lambda_{K_i})$ acts transitively on the special cohomology classes in $H^1(\mathbb Z_3^2, E^4/K_i)$ for each kernel $K_i$.	Recall that there are no rigid and free $\mathbb Z_3^2$-actions on $E^4/K_{\exc}$ (i.e., no special cohomology classes in $H^1(\mathbb Z_3^2, E^4/K_{\exc})$), see Remark \ref{13Shaolin}.
 \end{proof}

Using Proposition \ref{oneBihol}, the diffeomorphism problem can be reformulated in the following way: 

\begin{cor} \label{Z3^2-DiffeoCor}
The fourfolds $X_i$ and $X_j$  corresponding to  kernels  $K_i$ and $K_j$ 
are diffeomorphic if and only 
if $ \mathcal N_{\mathbb R}(\Lambda_{K_i}, \Lambda_{K_j})$ is not empty.  
\end{cor}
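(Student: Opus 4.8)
The corollary is essentially a bookkeeping consequence of Corollary \ref{oneBihol} together with the general machinery of Remark \ref{SeqRem} and Corollary \ref{GeometricBieber}. First I would recall that by Proposition \ref{oneBihol}, for each admissible kernel $K_i$ there is (up to biholomorphism, hence a fortiori up to diffeomorphism) a unique rigid hyperelliptic fourfold $X_i = (E^4/K_i)/\mathbb Z_3^2$; in particular the diffeomorphism type of $X_i$ does not depend on which special cohomology class in $H^1(\mathbb Z_3^2, E^4/K_i)$ we choose to build it from. So it suffices to fix one rigid free action on each $T_i := E^4/K_i$ with translation cocycle $\tau_i$, and decide when $X_i \simeq_{\mathrm{diff}} X_j$.

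\textbf{The ``only if'' direction.} Suppose $f\colon X_i \to X_j$ is a diffeomorphism. By Corollary \ref{GeometricBieber}, $f$ is induced by an affinity $\alpha(x) = Ax+d$ with $A \in \GL(8,\mathbb R)$, and condition (1) of Remark \ref{Con1and2} forces $A \in \mathcal N_{\mathbb R}(\Lambda_{K_i}, \Lambda_{K_j})$ (here $\Lambda_{K_i} = \mathbb Z[\zeta_3]^4 + K_i$ is the lattice of the torus $T_i$, and I use that $A\cdot \Lambda_{K_i} = \Lambda_{K_j}$ since $\alpha$ descends to a diffeomorphism of tori by Corollary \ref{GeometricBieber} (2)). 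In particular $\mathcal N_{\mathbb R}(\Lambda_{K_i},\Lambda_{K_j}) \neq \emptyset$.

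\textbf{The ``if'' direction.} Conversely, suppose there is $A \in \mathcal N_{\mathbb R}(\Lambda_{K_i}, \Lambda_{K_j})$. The issue is that $A$ alone does not produce a diffeomorphism: one must also find a translation $d$ so that the cocycle condition $(2)$ of Remark \ref{Con1and2} is satisfied, i.e. so that $A \ast \tau_i$ and $\tau_j$ represent the same class in $H^1(\mathbb Z_3^2, T_j)$ (equivalently, the same class in $H^2(\mathbb Z_3^2,\Lambda_{K_j})$, cf. Remark \ref{SeqRem}). Here is where uniqueness enters decisively: by Proposition \ref{oneBihol}, applied to the torus $T_j$, \emph{all} special cohomology classes in $H^1(\mathbb Z_3^2, T_j)$ lie in a single $\mathcal N_{\mathbb C}(\Lambda_{K_j})$-orbit — hence a fortiori in a single $\mathcal N_{\mathbb R}(\Lambda_{K_j})$-orbit. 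Now $A \ast \tau_i$ is itself (the class of) a rigid free $\mathbb Z_3^2$-action on $T_j$: indeed freeness and rigidity are preserved under conjugation by the affinity with linear part $A$ (freeness is a geometric property, rigidity depends only on the analytic representation by Corollary \ref{ConjRig}, and $A$ conjugates $\rho$ into an equivalent representation since $A \in \mathcal N_{\mathbb R}$ normalizes $\im(\rho_{\mathbb R})$). Therefore $A \ast \tau_i$ is a special class, so it lies in the same $\mathcal N_{\mathbb R}(\Lambda_{K_j})$-orbit as $\tau_j$: there exists $B \in \mathcal N_{\mathbb R}(\Lambda_{K_j})$ with $B \ast (A \ast \tau_i)$ cohomologous to $\tau_j$. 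Then $BA \in \mathcal N_{\mathbb R}(\Lambda_{K_i},\Lambda_{K_j})$ (compose the defining conditions), and $(BA) \ast \tau_i$ is cohomologous to $\tau_j$, so by Remark \ref{cocycle-rem} (2)--(3) there is a vector $d$ with $(BA, d)$ the data of an affine diffeomorphism $X_i \to X_j$.

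\textbf{Main obstacle.} The only genuinely nontrivial point is the implication ``$\mathcal N_{\mathbb R}(\Lambda_{K_i},\Lambda_{K_j}) \neq \emptyset \Rightarrow$ diffeomorphic'': one has to observe that conjugating $\tau_i$ by \emph{any} element $A$ of $\mathcal N_{\mathbb R}$ lands back among the special cohomology classes on $T_j$, and then invoke Proposition \ref{oneBihol} on $T_j$ to absorb the remaining ambiguity. The reverse implication and the well-definedness (independence of the chosen $\tau_i$) are immediate from Corollary \ref{GeometricBieber} and Proposition \ref{oneBihol}. I would present the argument in exactly this order: (i) well-definedness via Proposition \ref{oneBihol}; (ii) ``only if'' via Bieberbach's second theorem; (iii) ``if'' via the orbit-transitivity on $H^1$.
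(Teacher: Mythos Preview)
Your proposal is correct and follows essentially the same approach as the paper. The paper's proof is slightly more streamlined: rather than explicitly producing a correcting element $B \in \mathcal N_{\mathbb C}(\Lambda_{K_j})$ so that $(BA)\ast\tau_i$ is cohomologous to $\tau_j$, it simply observes that $A$ induces a diffeomorphism from $X_i$ to the quotient $Z$ of $T_j$ by the conjugated (still holomorphic, rigid, free) action $\psi := A\Phi_i A^{-1}$, and then invokes Proposition \ref{oneBihol} to conclude $Z \cong X_j$ biholomorphically --- but this is just a repackaging of your orbit-transitivity argument.
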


\begin{proof}
Assume that $A\in \mathcal N_{\mathbb R}(\Lambda_{K_i}, \Lambda_{K_j})$ and let $\Phi_i$ be a free holomorphic action giving  
$X_i$, then, $\psi:=A \Phi_i A^{-1}$ is a free action on $E^4/K_j$ with linear part $A\rho_{\mathbb R} A^{-1} = \rho_{\mathbb R}\circ \varphi_A$. In particular, $\psi$ is holomorphic and rigid. The matrix $A$ induces a diffeomorphism between $X_i$ and the quotient $Z$ with respect to $\psi$. However, $Z$ and $X_j$ are biholomorphic since  by Proposition \ref{oneBihol}, there is precisely one biholomorphism class corresponding to the kernel $K_j$. The converse is obvious. 
\end{proof}

We therefore need to determine the $i \neq j$ for which $\mathcal N_{\mathbb R}(\Lambda_{K_i}, \Lambda_{K_j})$ is non-empty.

\begin{rem}
Let $A$ be a matrix in $\mathcal N_{\mathbb R}(\Lambda_{K_i}, \Lambda_{K_j})$, 
then, there exists an automorphism $\varphi \in \Aut(\mathbb Z_3^2)$ such that 
\[
A\rho_{\mathbb R}A^{-1}=\rho_{\mathbb R}\circ \varphi.
\]
Note that the representation  $\rho_{\mathbb R}$ is the direct sum of all four irreducible $2$-dimensional real representations 
of $\mathbb Z_3^2$: 
\[
\rho_{\mathbb R}(a,b)=\diag(B^a, \ B^b, \ B^{2a+b}, \ B^{a+b}),  \quad \makebox{where} \quad  
B=
-\frac{1}{2}
\begin{pmatrix} 1 &  \sqrt{3}  \\ -\sqrt{3} &1 \end{pmatrix}.
\]
By Schur's lemma, $A$ is, up to a permutation, a block diagonal matrix of $2\times 2$ blocks. 
In analogy to Remark \ref{Antilinear}, they are $\mathbb C$-linear or antilinear, i.e., of the form 
\[
z \mapsto cz \qquad \makebox{or} \qquad z \mapsto c\overline{z}. 
\]
Since  $AK_i=K_j$, we conclude as in the proof of  Proposition \ref{Z3^2-normalizer} that $A\cdot \mathbb Z[\zeta_3]^4=\mathbb Z[\zeta_3]^4$. This in turn implies that the scalars 
$c$ are units in $\mathbb Z[\zeta_3]$. 
In other words, 
\[
\mathcal N_{\mathbb R}(\Lambda_{K_i}, \Lambda_{K_j})= \lbrace A \in N_{\Aff_0(E^4)}(\mathbb Z_3^2) ~\big\vert ~AK_i=K_j\rbrace.  
\]
The group $N_{\Aff_0(E^4)}(\mathbb Z_3^2)$ has $\vert \Aut(\mathbb Z_3^2)\vert \cdot 6^4= 62208$ elements and is generated by  the maps
\begin{itemize}
\item $M_1(z_1,z_2,z_3,z_4):=(\overline{z}_1,z_2,z_4,z_3)$,
\item $M_2(z_1,z_2,z_3,z_4):=(z_2,\overline{z}_3,z_4,z_1)$ and
\item $M_3(z_1,z_2,z_3,z_4):=(-\zeta_3z_1,z_2,z_3,z_4)$.
\end{itemize}
Note that  $N_{\Aff_0(E^4)}(\mathbb Z_3^2)$ is a subgroup of $O(8)$. In particular, the determinant of any of its elements is $\pm1$. 
\end{rem}

\begin{cor}
Let $\mu_m(K)$ be the number of elements in $K$ with exactly $m$ non-zero entries. Then, 
$\mathcal N_{\mathbb R}(\Lambda_{K_i}, \Lambda_{K_j})$ is empty if 
$\mu_m(K_i) \neq  \mu_m(K_j)$ for some $m$. 
\end{cor}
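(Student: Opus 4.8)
The plan is to read the statement off the explicit description of $\mathcal N_{\mathbb R}(\Lambda_{K_i},\Lambda_{K_j})$ obtained in the remark preceding this corollary, and to argue by contraposition: I will show that a nonempty $\mathcal N_{\mathbb R}(\Lambda_{K_i},\Lambda_{K_j})$ forces $\mu_m(K_i)=\mu_m(K_j)$ for every $m$.

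First I would fix $A \in \mathcal N_{\mathbb R}(\Lambda_{K_i},\Lambda_{K_j})=\lbrace A \in N_{\Aff_0(E^4)}(\mathbb Z_3^2) \mid AK_i=K_j\rbrace$ and unpack its shape. As recalled in that remark, $\rho_{\mathbb R}$ is the sum of the four pairwise non-isomorphic $2$-dimensional real irreducible representations of $\mathbb Z_3^2$, so Schur's lemma forces $A$, after a permutation $\sigma$ of the four coordinates of $E^4$, to be block diagonal with $2\times 2$ blocks, each of which acts in the complex coordinate of the relevant elliptic curve either as $z\mapsto cz$ or as $z\mapsto c\overline{z}$; moreover $AK_i=K_j$ entails $A\cdot\mathbb Z[\zeta_3]^4=\mathbb Z[\zeta_3]^4$, so every scalar $c$ is a unit of $\mathbb Z[\zeta_3]$. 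Viewing $A$ as an automorphism of the group $E^4$, this means that the $k$-th coordinate of $Ax$ depends only on the single coordinate $x_{\sigma(k)}$ of $x$, through a map $x_{\sigma(k)}\mapsto c_k x_{\sigma(k)}$ or $x_{\sigma(k)}\mapsto c_k\overline{x_{\sigma(k)}}$ with $c_k$ a unit of $\mathbb Z[\zeta_3]$. Each such operation on $E$ is a bijection fixing $0$, hence it carries nonzero elements to nonzero elements, and therefore, for every $x\in E^4$, the number of nonzero coordinates of $Ax$ equals the number of nonzero coordinates of $x$.

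Since $AK_i=K_j$, the map $A$ restricts to a bijection $K_i\to K_j$, and by the previous step this bijection preserves the number of nonzero entries of each element; it thus restricts to a bijection between the set of elements of $K_i$ with exactly $m$ nonzero entries and the corresponding set for $K_j$, so $\mu_m(K_i)=\mu_m(K_j)$ for all $m$, which is precisely the contrapositive of the assertion. I do not anticipate a genuine obstacle: the only point that needs care is invoking the correct structural inputs -- that $A$ is \emph{monomial} in $2\times 2$ blocks (Schur's lemma, using that $\rho_{\mathbb R}$ is multiplicity-free, so that no mixing beyond a coordinate permutation occurs) and that the block scalars are units (from $A\cdot\mathbb Z[\zeta_3]^4=\mathbb Z[\zeta_3]^4$) -- both of which are already established in the discussion immediately preceding the corollary, leaving only a short bookkeeping argument on nonzero coordinates.
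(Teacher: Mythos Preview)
Your proof is correct and follows essentially the same approach as the paper's own proof: both invoke the structural description from the preceding remark that any $A\in\mathcal N_{\mathbb R}(\Lambda_{K_i},\Lambda_{K_j})$ is, up to a permutation of the four coordinates, a direct sum of maps $z\mapsto cz$ or $z\mapsto c\overline z$ with $c$ a unit, and then observe that such a monomial transformation induces a bijection $K_i\to K_j$ preserving the number of nonzero entries. The paper states this in two sentences; your version spells out the bookkeeping more explicitly, but there is no substantive difference.
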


\begin{proof}
According to the previous discussion, any $A \in \mathcal N_{\mathbb R}(\Lambda_{K_i}, \Lambda_{K_j})$ can be regarded as an isomorphism 
$A\colon K_i \to K_j$ of finite groups. The functions $\mu_m$ are  invariant because $A$ is the direct sum of four maps of the form $z \mapsto cz$ or  $z \mapsto c\overline{z}$, up to a permutation. 
\end{proof}

Going through the list of all kernels $K_i$  from Remark  \ref{13Shaolin}, we obtain:

\begin{prop} \label{Z3^2-diffeo}
 The  set $\mathcal N_{\mathbb R}(\Lambda_{K_i}, \Lambda_{K_j})$ is empty for all  $1 \leq i < j \leq 12$, except in the following cases:
\begin{center}	\bgroup\def\arraystretch{1.3}
	\begin{tabular}{|c|c|c|} \hline
		$i$ & $j$ & Size of $\mathcal N_{\mathbb R}(\Lambda_{K_i}, \Lambda_{K_j})$ \\ \hline \hline
		$2$ & $4$ & $5184$ \\ \hline 
		$3$ & $5$ & $3888$ \\ \hline 
		$7$ & $8$ & $3888$  \\ \hline 
		$10$ & $11$ & $1296$ \\ \hline 
	\end{tabular} \egroup
\end{center}
\medskip
Furthermore, in  each set $\mathcal N_{\mathbb R}(\Lambda_{K_i}, \Lambda_{K_j})$ contained in the table, half of the 
elements are orientation-preserving, the other half orientation-reversing.
%
\end{prop}

\begin{prop} \label{Z3^2-pi1}
A hyperelliptic fourfold  $X$  whose fundamental group is isomorphic to the fundamental group of a rigid hyperelliptic fourfold  with holonomy $\mathbb Z_3^2$ is rigid. 
\end{prop}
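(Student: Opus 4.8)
The plan is to reduce the statement, via Bieberbach's structure theorems, to a short representation-theoretic assertion about $\mathbb Z_3^2$, and then to settle that assertion by a counting argument based on the Hodge decomposition $\Lambda\otimes_\ZZ\CC\cong V\oplus\overline V$ (equivalently, $\rho_\Lambda\otimes\CC\cong\rho\oplus\overline\rho$).

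Write $X=T/G$ with $T=V/\Lambda$ and $\Gamma:=\pi_1(X)$, and let $\Gamma':=\pi_1(X')$ be the fundamental group of a rigid hyperelliptic fourfold $X'=T'/\mathbb Z_3^2$ with $\Gamma\cong\Gamma'$. By Bieberbach's first theorem the translation lattice $\Lambda=\Gamma\cap\CC^4$ is the maximal normal abelian subgroup of $\Gamma$, hence characteristic; so any isomorphism $\Gamma\xrightarrow{\sim}\Gamma'$ restricts to an isomorphism $\Lambda\xrightarrow{\sim}\Lambda'$ and descends to an isomorphism $\bar\gamma\colon G\xrightarrow{\sim}\mathbb Z_3^2$ (this is Corollary \ref{GeometricBieber}(1)). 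In particular $G\cong\mathbb Z_3^2$. Since $\bar\gamma$ is induced by a group isomorphism, it intertwines the conjugation actions, i.e.\ $\rho_\Lambda\cong\rho'_{\Lambda'}\circ\bar\gamma$ as integral $G$-representations; tensoring with $\CC$ and identifying $G$ with $\mathbb Z_3^2$ along $\bar\gamma$, I obtain $\rho\oplus\overline\rho\cong\rho'\oplus\overline{\rho'}$ as complex $\mathbb Z_3^2$-representations.

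Next I would pin down the right-hand side and then read off $\rho$. By Remark \ref{analyticreps}, after composing with an automorphism of $\mathbb Z_3^2$ one may take $\rho'(a,b)=\diag(\zeta_3^a,\zeta_3^b,\zeta_3^{2a+b},\zeta_3^{a+b})$; then $\rho'$ and $\overline{\rho'}$ together realize each of the eight nontrivial characters of $\mathbb Z_3^2$ exactly once and do not contain the trivial character. This multiplicity pattern is preserved by $\Aut(\mathbb Z_3^2)$, hence $\rho\oplus\overline\rho$ has it as well. Now, since $\mathbb Z_3^2$ is abelian, the representation $\rho$ — which is faithful (Remark \ref{basicAnalytic}(1)) and $4$-dimensional because $\dim X=4$ — decomposes as $\rho\cong\chi_1\oplus\cdots\oplus\chi_4$ for one-dimensional characters $\chi_i$, so that $\overline\rho\cong\overline{\chi_1}\oplus\cdots\oplus\overline{\chi_4}$. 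The previous sentence says the multiset $\{\chi_1,\overline{\chi_1},\dots,\chi_4,\overline{\chi_4}\}$ equals the set of eight nontrivial characters, each with multiplicity one. A quick inspection forces: no $\chi_i$ is trivial; the $\chi_i$ are pairwise distinct; and $\chi_i\neq\overline{\chi_j}$ for $i\neq j$ — any violation would make some character occur with multiplicity $\geq 2$. Combining this with the fact that a nontrivial character $\chi$ of $\mathbb Z_3^2$ never satisfies $\chi=\overline\chi$ (its values would be cube roots of unity equal to their conjugates, hence $1$), we get $\chi_i\neq\overline{\chi_j}$ for all $i,j$, i.e.\ $\rho$ and $\overline\rho$ have no common irreducible constituent. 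By Corollary \ref{ConjRig}, $X$ is rigid.

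I do not expect a genuine obstacle here: the Bieberbach reduction is standard and already packaged in Corollary \ref{GeometricBieber}, and the representation-theoretic step is elementary. The one point that deserves a line of care is the equivariance used in the second paragraph — namely that an abstract isomorphism $\Gamma\cong\Gamma'$ really does transport the integral holonomy representation — but this is immediate from naturality of conjugation once $\Lambda$ is recognized as a characteristic subgroup.
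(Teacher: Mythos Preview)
Your proof is correct and follows essentially the same strategy as the paper's: both transport the holonomy representation along the isomorphism of fundamental groups and then observe that the resulting $8$-dimensional representation $\rho\oplus\overline\rho$ (equivalently, its decomplexification $\rho_{\mathbb R}$) realizes each nontrivial irreducible of $\mathbb Z_3^2$ exactly once, which forbids any common constituent of $\rho$ and $\overline\rho$. The only cosmetic difference is that the paper phrases the count via the four distinct nontrivial $2$-dimensional real irreducibles in $\rho_{\mathbb R}$ (invoking Corollary~\ref{GeometricBieber}(2) and Remark~\ref{Con1and2}), while you work directly with the eight complex characters after identifying $\Lambda$ as a characteristic subgroup.
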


\begin{proof}
According to Corollary \ref{GeometricBieber} (2), the isomorphism of fundamental groups induces an affine diffeomorphism of the manifolds.
Note that the complex holonomy representation $\rho' \colon \mathbb Z_3^2 \to \GL(4,\mathbb C)$ of $X$ cannot contain complex conjugate sub-representa\-tions, 
 otherwise 
the decomplexification  $\rho'_{\mathbb R}$ consists of at most three distinct non-trivial irreducible real representations -- a contradiction because 
$\rho'_{\mathbb R}$ is equivalent to $\rho_{\mathbb R}(a,b)=\diag(B^a, \ B^b, \ B^{2a+b}, \ B^{a+b})$ up to an automorphism of $\mathbb Z_3^2$ (see Remark \ref{Con1and2}). The latter however contains four non-trivial distinct irreducible real representations. According to Corollary \ref{ConjRig}, the manifold $X$ is rigid. 
\end{proof}

\section{Proof of the Second Main Theorem} 

In this section, we summarize the proof of our second main result. 

\begin{proof}[Proof of Theorem \ref{Mani}]
In Remark \ref{analyticreps}, the analytic representations for both $G = \mathbb Z_3^2$ and $\Heis(3)$ are described. Proposition \ref{isog} shows that a hyperelliptic fourfold with such a holonomy representation is finitely covered by a product of four Fermat elliptic curves. \\
(1) It was proved in Proposition \ref{oneBihol} that there are exactly twelve biholomorphism classes of rigid hyperelliptic fourfolds with holonomy $\mathbb Z_3^2$. We obtain the translation parts $\tau_i$ listed in the theorem using our MAGMA code. The statement regarding the diffeomorphism types follows from Corollary \ref{Z3^2-DiffeoCor} and the computation in Proposition \ref{Z3^2-diffeo}. Proposition \ref{Z3^2-pi1} shows that a hyperelliptic fourfold whose fundamental group is isomorphic to the fundamental group of a rigid hyperelliptic fourfold with holonomy $\mathbb Z_3^2$ is rigid and therefore biholomorphic to one of the $X_i$. \\
(2) The classification follows from Proposition \ref{nothomeo} and Proposition \ref{Heis-Class}. The listed cocycles are a by-product of our MAGMA computation. In Proposition \ref{He3-rigid}, we explained that any  hyperelliptic fourfold with holonomy $\Heis(3)$ is rigid.
\end{proof}

%
%
%
%
%
%
%

\end{document}